\documentclass[11pt]{article}
\pdfoutput=1
\usepackage{enumerate}
\usepackage{pdfsync}
\usepackage[OT1]{fontenc}

\usepackage[usenames]{color}
\usepackage{smile}
\usepackage[colorlinks,
            linkcolor=red,
            anchorcolor=blue,
            citecolor=blue
            ]{hyperref}
\usepackage{fullpage}
\usepackage{hyperref}
\usepackage[protrusion=true,expansion=true]{microtype}
\usepackage{pbox}
\usepackage{setspace}
\usepackage{tabularx}
\usepackage{float}
\usepackage{wrapfig,lipsum}
\usepackage{enumitem}

\makeatletter
\newcommand*{\rom}[1]{\expandafter\@slowromancap\romannumeral #1@}
\makeatother
\allowdisplaybreaks


\newcommand{\la}{\langle}
\newcommand{\ra}{\rangle}


\def \vst {\vb_t^s}
\def \Ust {\Ub_t^s}
\def \Mst {M_{s,t}}

\def \hst {\hb_t^s}
\def \bst {b_{s,t}}
\def \Bst {B_{s,t}}
\def \alphast {\alpha_{s,t}}
\def \betast {\beta_{s,t}}
\def \rx {\hat{\xb}^{s-1}}
\def \rg {\gb^s}
\def \rH {\Hb^s}
\def \xold {\xb_t^s}
\def \xnew {\xb_{t+1}^s}
\def \xout {\xb_{\text{out}}}
\def \constant {C}
\def \bg {b}
\def \bH {B}
\def \df {\nabla f}
\def \Hf {\nabla^2 f}
\def \dF {\nabla F}
\def \HF {\nabla^2 F}
\def \ev {\eb_{\vb}}
\def \eU {\eb_{\Ub}}
\def \cold {c_{s,t}}
\def \cnew {c_{s,t+1}}
\def \Gst {\Gamma_{s,t}}
\def \minG {\gamma_n}
\def \Gt {\Gamma_t}
\def \ct {c_t}
\def \ctnew {c_{t+1}}
\def \glip {L_1}
\def \Hlip {L_2}
\def \mineig {\lambda_{\text{min}}}
\def \bigO {O}
\def \tbigO {\tilde{\bigO}}
\def \CR {\text{CR}}
\def \SCR {\text{SCR}}
\def \SVRC {\text{SVRC}}
\def \SVRCp {\text{Lite-SVRC}}

\begin{document}
\title{\huge Sample Efficient Stochastic Variance-Reduced Cubic Regularization Method}
\author
{
	Dongruo Zhou\thanks{Department of Computer Science, University of California, Los Angeles, CA 90095, USA; e-mail: {\tt drzhou@cs.ucla.edu}} 
	~~~and~~~
	Pan Xu\thanks{Department of Computer Science, University of California, Los Angeles, CA 90095, USA; e-mail: {\tt panxu@cs.ucla.edu}} 
	~~~and~~~
	Quanquan Gu\thanks{Department of Computer Science, University of California, Los Angeles, CA 90095, USA; e-mail: {\tt qgu@cs.ucla.edu}}
}
\date{May 18, 2018\footnote{The first version of this paper was submitted to UAI 2018 on March 9, 2018. This is the second version with improved presentation and additional baselines in the experiments, and was submitted to NeurIPS 2018 on May 18, 2018. }}

\maketitle

\begin{abstract}
We propose a sample efficient stochastic variance-reduced cubic regularization (Lite-SVRC) algorithm for finding the local minimum efficiently in nonconvex optimization. The proposed algorithm achieves a lower sample complexity of Hessian matrix computation than existing cubic regularization based methods. At the heart of our analysis is the choice of a constant batch size of Hessian matrix computation at each iteration and the stochastic variance reduction techniques. In detail, for a nonconvex function with $n$ component functions, Lite-SVRC converges to the local minimum within $\tilde{O}(n+n^{2/3}/\epsilon^{3/2})$\footnote{Here $\tilde{O}$ hides poly-logarithmic factors} Hessian sample complexity, which is faster than all existing cubic regularization based methods. Numerical experiments with different nonconvex optimization problems conducted on real datasets validate our theoretical results. 
\end{abstract}

\section{Introduction}

We study the following unconstrained finite-sum nonconvex optimization problem:
\begin{align}\label{def:problem}
    \min_{\xb \in \RR^d} F(\xb) = \frac{1}{n}\sum_{i=1}^n f_i(\xb), 
\end{align}
where each $f_i:\RR^d\rightarrow \RR$ is a general nonconvex function. Such nonconvex optimization problems are ubiquitous in machine learning, including training deep neural network \citep{lecun2015deep}, robust linear regression \citep{yu2017robust} and nonconvex regularized logistic regression \citep{reddi2016fast}. In principle, finding the global minimum of \eqref{def:problem} is generally a NP-hard problem \citep{hillar2013most} due to the lack of convexity. 

Instead of finding the global minimum, various algorithms have been developed in the literature \citep{Nesterov2006Cubic, Cartis2011Adaptive, Carmon2016Gradient,Agarwal2017Finding, xu2017neon+, AllenLi2017-neon2} to find an approximate local minimum of \eqref{def:problem}. In particular, a point $\xb$ is said to be an $(\epsilon_g, \epsilon_H)$-approximate local minimum of $F$ if 
\begin{align}\label{eq:def_localminima}
    \|\dF(\xb)\|_2\leq \epsilon_g, \quad\mineig(\HF(\xb)) \geq -\epsilon_H,
\end{align}
where $\epsilon_g,\epsilon_H>0$ are predefined precision parameters. It has been shown that such approximate local minima can be as good as global minima in some problems. For instance, \citet{ge2016matrix} proved that any local minimum is actually a global minimum in matrix completion problems. 
Therefore, to develop an algorithm to find an approximate local minimum is of great interest both in theory and practice. 

A very important and popular method to find the approximate local minimum is cubic-regularized (CR) Newton method, which was originally introduced by \cite{Nesterov2006Cubic}. Generally speaking, in the $k$-th iteration, CR solves a sub-problem which minimizes a cubic-regularized second-order Taylor expansion at current iterate $\xb_k$. The update rule can be written as follows:
\begin{align}
    \hb_k &= \argmin_{\hb \in \RR^d} \la \dF(\xb_k), \hb\ra  +\frac{1}{2}\la \HF(\xb_k)\hb, \hb\ra+\frac{M}{6}\|\hb\|_2^3,\label{def:total_sub_problem}\\
    \xb_{k+1}& = \xb_k+\hb_k, \label{def:total_sub_problem2}
\end{align}
where $M>0$ is a penalty parameter used in CR. \citet{Nesterov2006Cubic} proved that to find an $(\epsilon, \sqrt{\epsilon})$-approximate local minimum of a nonconvex function $F$, CR requires at most $\bigO(\epsilon^{-3/2})$ iterations. However, a main drawback for CR is that it needs to sample $n$ individual Hessian matrix $\Hf_i(\xb_k)$ to get the exact Hessian $\nabla^2 F(\xb_k)$ used in \eqref{def:total_sub_problem}, which leads to a total $\bigO(n\epsilon^{-3/2})$ Hessian sample complexity, i.e., number of queries to the stochastic Hessian $\nabla^2 f_i(\xb)$ for some $i$ and $\xb$. Such computational cost will be extremely expensive when $n$ is large as is in many large scale machine learning problems. 


To overcome the computational burden of CR based methods, some recent studies have proposed to use sub-sampled Hessian instead of the full Hessian \citep{kohler2017sub, xu2017second} 
to reduce the Hessian complexity. In detail, \citet{kohler2017sub} proposed a sub-sampled cubic-regularized Newton method ($\SCR$), which uses a subsampled Hessian instead of full Hessian to reduce the per iteration sample complexity of Hessian evaluations. 
\citet{xu2017second} proposed a refined convergence analysis of $\SCR$, as well as a subsampled Trust Region algorithm \citep{conn2000trust}. 
Nevertheless, $\SCR$ bears a much slower convergence rate than the original CR method, 
and the total Hessian sample complexity for $\SCR$ to achieve an $(\epsilon, \sqrt{\epsilon})$-approximate local minimum is $\tbigO(\epsilon^{-5/2})$. This suggests that the computational cost of $\SCR$ could be even worse than $\CR$ when $\epsilon  \lesssim n^{-1}$.

In order to retain the fast convergence rate of CR and enjoy the computational efficiency of $\SCR$, \citet{zhou2018stochastic} proposed a stochastic variance-reduced cubic-regularized Newton methods ($\SVRC$) to further improve the convergence rate of stochastic CR method. 
At the core of SVRC is an innovative semi-stochastic gradient, as well as a semi-stochastic Hessian \citep{gower2017tracking,wai2017curvature}. 
They proved that SVRC achieves an $(\epsilon, \sqrt{\epsilon})$-approximate local minimum with $\tbigO(n+n^{4/5} \epsilon^{-3/2})$ second-order oracle complexity, which is defined to be the number of queries to the second-order oracle, i.e., a triplet $(f_i(\xb), \nabla f_i(\xb), \nabla^2 f_i(\xb))$.
However, the second-order oracle complexity is dominated by the maximum number of queries to one of the elements in the triplet triplet $(f_i(\xb), \nabla f_i(\xb), \nabla^2 f_i(\xb))$, and therefore is not always accurate in reflecting the true computational complexity. 
For instance, Algorithm A with higher second-order oracle complexity may be due to its need to query more stochastic gradients ($\nabla f_i(\xb)$'s) than Algorithm B, but it may need to query much fewer stochastic Hessians ($\nabla^2 f_i(\xb))$'s) than Algorithm B. Given that the computational complexity of the stochastic Hessian matrix is $\bigO(d^2)$  while that of the stochastic gradient is only $\bigO(d)$, Algorithm B can be more efficient than Algorithm A.  
In other words, an algorithm with higher second-order oracle complexity is not necessarily slower than the other algorithm with lower second-order oracle complexity. 
So it is more reasonable to use the Hessian sample complexity to evaluate the efficiency of cubic regularization methods when the dimension $d$ is not small. Recently, \cite{wang2018sample} proposed another variance reduced stochastic cubic regularization algorithm, 
which achieves $\tbigO(n+n^{8/11} \epsilon^{-3/2})$ Hessian sample complexity\footnote{They actually missed additional $O(n)$ Hessian sample complexity since their algorithms need to calculate the minimum eigenvalue of Hessian as a stopping criteria in each iteration.} to converge to an $(\epsilon, \sqrt{\epsilon})$-approximate local minimum. 



In this paper, in order to reduce the Hessian sample complexity, we develop a sample efficient stochastic variance-reduced cubic-regularized Newton method called $\SVRCp$, which significantly reduces the sample complexity of Hessian matrix evaluations in stochastic CR methods. In detail, under milder conditions, we prove that $\SVRCp$ achieves a lower Hessian sample complexity than existing cubic regularization based methods. Numerical experiments with different types of nonconvex optimization problems on various real datasets are conducted to validate our theoretical results. 


We summarize our major contributions as follows:
\begin{itemize}[leftmargin=*]
    \item The proposed Lite-SVRC algorithm only requires a constant batch size of Hessian evaluations at each iteration. In contrast, the batch size of Hessian evaluations at each iteration in \cite{wang2018sample} is implicitly chosen based on the update of the next iterate. 
    \item We prove that $\SVRCp$ converges to an $(\epsilon, \sqrt{\epsilon})$-approximate local minimum of a nonconvex function within $\tbigO(n+n^{2/3}\epsilon^{-3/2})$ Hessian sample complexity, which outperforms all the state-of-the-art cubic regularization algorithms including \cite{zhou2018stochastic,wang2018sample}. 
    \item Last but not the least, our results do not require the Lipschitz continuous condition of $F(\xb)$, which directly improves the results in \cite{wang2018sample} that rely on this additional assumption. 
\end{itemize}

\subsection{Additional Related Work}


\noindent\textbf{Cubic Regularization and Trust-region Newton Method}
Traditional Newton method in convex setting has been widely studied in past decades \citep{bennett1916newton, bertsekas1999nonlinear}. In the nonconvex setting, based upon cubic-regularized Newton method \citep{Nesterov2006Cubic}, 
\citet{Cartis2011Adaptive} proposed a practical framework of cubic regularization which uses an adaptive cubic penalty parameter and approximate cubic sub-problem solver. \citet{Carmon2016Gradient, Agarwal2017Finding} presented two fast cubic-regularized methods where they used only gradient and Hessian-vector product to solve the cubic sub-problem. 
\citet{tripuraneni2017stochastic} developed a stochastic cubic regularization algorithm based on \cite{kohler2017sub} where only gradient and Hessian vector product are used. 
The other line of related research is trust-region Newton methods \citep{conn2000trust,carrizo2016trust, curtis2017trust,CurtScheShi17}, which have comparable performance guarantees as cubic regularization methods. 

\noindent\textbf{Finding Approximate Local Minima}
There is another line of work which focuses on finding approximate local minima using the negative curvature. \citet{ge2015escaping, Jin2017How} showed that (stochastic) gradient descent with an injected uniform noise over a small ball is able to converge to approximate local minima. \citet{Carmon2016Accelerated, royer2017complexity, allen2017natasha} showed that one can find approximate local minima faster than first-order methods by using Hessian vector product to extract information of negative curvature. \citet{xu2017neon+, AllenLi2017-neon2, jin2017accelerated} further proved that gradient methods with bounded perturbation noise are also able to find approximate local minima faster than the first-order methods.

\noindent\textbf{Variance Reduction}
Variance-reduced techniques play an important role in our proposed algorithm. \citet{roux2012stochastic, johnson2013accelerating} proved that stochastic gradient descent(SGD) with variance reduction is able to converge to global minimum much faster than SGD in convex setting. In the nonconvex setting, \citet{Reddi2016Stochastic, allen2016variance} show that stochastic variance-reduced gradient descent (SVRG) is able to converge to first-order stationary point with the same convergence rate as gradient descent, yet with an $\Omega(n^{1/3})$ improvement in gradient complexity. 

The remainder of this paper is organized as follows: we present our proposed algorithm in Section \ref{sec:alg}. In Section \ref{sec:theory}, we present our theoretical analysis of the proposed algorithm and compare it with the state-of-the-art Cubic Regularization methods. We conduct thorough numerical experiments on different nonconvex optimization problems and on different real world datasets to validate our theory in Section \ref{sec:exp}. We conclude our work in Section \ref{sec:conclusion}.

\noindent\textbf{Notation:}  We use $a(x) = \bigO(b(x))$ if $a(x) \leq Cb(x)$, where $C$ is a constant independent of any parameters in our algorithm. We use $\tilde O(\cdot)$ to hide polynomial logarithm terms. We use $\|\vb\|_2$ to denote the 2-norm of vector $\vb \in \RR^d$. For symmetric matrix $\Hb \in \RR^{d \times d}$, we use $\|\Hb\|_2$ and $\|\Hb\|_{S_r}$ to denote the spectral norm and Schatten $r$- norm of $\Hb$. We denote the smallest eigenvalue of $\Hb$ to be $\mineig(\Hb)$. 

\section{The Proposed Algorithm}\label{sec:alg}

\begin{algorithm*}[ht]
\caption{Sample efficient stochastic variance-reduced cubic regularization method (Lite-SVRC)}\label{algorithm:1}
\begin{algorithmic}[1]
  \STATE \textbf{Input:} batch size parameters $D_g,D_h$, penalty parameter $M_{s,t}$, $s\in \{1, ..., S\}, t\in\{0, ..., T-1\}$, initial point $\hat{\xb}^0$.
  \FOR{$s=1,\ldots,S$}
  \STATE $\xb_0^{s} = \hat{\xb}^{s-1}$
  \STATE $\gb^{s}=\nabla F(\hat{\xb}^{s-1})=\frac{1}{n}\sum_{i=1}^n \nabla f_i(\hat{\xb}^{s-1}),\Hb^{s} =\nabla^2 F(\hat{\xb}^{s-1})= \frac{1}{n}\sum_{i=1}^n \nabla^2 f_i(\hat{\xb}^{s-1})$
  \STATE $\hb_0^s = \argmin_{\hb\in \RR^d} m_0^s(\hb) =  \la\gb^s,\hb\ra + \frac{1}{2}\la\Hb^s \hb,\hb\ra + \frac{M_{s,0}}{6} \|\hb\|_2^3$
  \STATE $\xb_1^s = \xb_0^s + \hb_0^s$
  \FOR{$t=1,\ldots,T-1$}
  \STATE $b_{s,t} = D_g/\|\xold - \rx\|_2^2, t>0$ 
  \STATE $B_{s,t} = D_H$
  \STATE Sample index set $I_{g}, I_{h}\subseteq[n]$ uniformly with replacement, $|I_g| = b_{s,t}, |I_h| = B_{s,t}$
  \STATE $\vb_t^{s} = 1/b_{s,t}\big(\sum_{i_t \in I_g} \nabla f _{i_t}({\xb}_t^{s}) -  \nabla f _{i_t}(\hat{\xb}^{s-1})\big) +  {\gb}^{s} $
  \STATE $\Ub_t^{s} = 1/B_{s,t}\big(\sum_{j_t \in I_h} \nabla^2 f_{j_t}(\xb_t^{s}) - \nabla^2 f_{j_t}(\hat{\xb}^{s-1}) \big)+\Hb^s$
  \STATE ${\hb}_t^{s} = \argmin_{\hb\in \RR^d} m_t^s(\hb) =  \la{\vb}_t^{s},\hb\ra + \frac{1}{2}\la\Ub_t^{s} \hb,\hb\ra + \frac{M_{s,t}}{6} \|\hb\|_2^3$ 
  \STATE $\xb_{t+1}^{s} = \xb_{t}^{s} + \hb_t^{s}$
  \ENDFOR
  \STATE $\hat{\xb}^{s} = \xb_{T}^{s}$
  \ENDFOR
  \STATE \textbf{Output:} Uniformly randomly choose one $\xb_t^s$ as $\xout$, for $t \in\{1,\ldots,T\}$ and $s \in\{1,\ldots,S\}$.
\end{algorithmic}
\end{algorithm*}


In this section, we present our proposed algorithm Lite-SVRC. As is displayed in Algorithm \ref{algorithm:1}, 
our algorithm has $S$ epochs with each epoch length $T$. At the beginning of the $s$-th epoch, we calculate the gradient and Hessian of $F$ as `reference' of our algorithm, denoted by $\rg$ and $\rH$ respectively. Unlike $\CR$ which needs to calculate the full gradient and Hessian at each iteration, we only need to calculate them every $T$ iterations.

At the $t$-th iteration of the $s$-th epoch, we need to solve the CR sub-problem defined in \eqref{def:total_sub_problem}. Since the computational cost of $\dF(\xold)$ and $\HF(\xold)$ is expensive, we use the following semi-stochastic gradient $\vst$ and Hessian $\Ust$ instead
\begin{align}
    \vst 
     &= \frac{1}{\bst}\sum_{i_t \in I_g}\big[\df_{i_t}(\xold)  - \df_{i_t}(\rx) \big]+\rg, \label{def:vst}\\
    \Ust
    &= \frac{1}{\Bst}\sum_{j_t \in I_h}\big[\Hf_{j_t}(\xold)  -  \Hf_{j_t}(\rx) \big]+\rH, \label{def:Ust}
\end{align}
where $\rx$ is the reference point at which $\rg$ and $\rH$ are computed, $I_g$ and $I_h$ are sampling index sets (with replacement), $\bst$ and $\Bst$ are sizes of $I_g$ and $I_h$. Note that similar semi-stochastic gradient and Hessian have been proposed in \cite{johnson2013accelerating,xiao2014proximal} and  \cite{gower2017tracking,wai2017curvature,zhou2018stochastic,wang2018sample} respectively. 
We choose the minibatch sizes of stochastic gradient and stochastic Hessian for Algorithm \ref{algorithm:1} as follows:
\begin{align}
    \bst = D_g /\|\xold - \rx\|_2^2, \quad\Bst = D_H,
\end{align}
where $D_g, D_H$ are two constants only depending on $n$ and $d$. 

Compared with the SVRC algorithm proposed in \cite{zhou2018stochastic}, our algorithm uses a lite version of semi-stochastic gradient \citep{johnson2013accelerating,xiao2014proximal}, instead of the sophisticated one with Hessian information proposed in \cite{zhou2018stochastic}. Note that the additional Hessian information in the semi-stochastic gradient in \cite{zhou2018stochastic} actually increases the Hessian sample complexity. Therefore, with the goal of reducing the Hessian sample complexity, the standard semi-stochastic gradient \citep{johnson2013accelerating,xiao2014proximal} used in this paper is more favored.

On the other hand, there are two major differences between our algorithm and the SVRC algorithms proposed in \cite{wang2018sample}: (1) our algorithm uses a constant Hessian minibatch size instead of an adaptive one in each iteration, and thus the parameter tuning of our algorithm is much easier. In sharp contrast, the minibatch size of the stochastic Hessian in the algorithm proposed by \cite{wang2018sample} is dependent on the next iterate, which makes the update an implicit one and it is hard to tune the parameters in practice; and (2) our algorithm does not need to compute the minimum eigenvalue of the Hessian in each iteration, and thus really reduces the Hessian sample complexity as well as runtime complexity in practice. In contrast, the algorithm in \cite{wang2018sample} needs to calculate the minimum eigenvalue of the Hessian as a stopping criteria in each iteration, which actually incurs additional $O(n)$ Hessian sample complexity. 


\section{Main Theory}\label{sec:theory}
In this section, we present our theoretical results on the Hessian sample complexity of Lite-SVRC.

We start with the following assumptions that are needed throughout our analysis:
\begin{assumption}[Gradient Lipschitz]\label{assumption:grad_lip}
 There exists a constant $\glip >0$, such that for all $\xb,\yb$ and $i \in \{1,...,n\}$ 
\begin{align*}
  \|\df_i(\xb) - \df_i (\yb)\|_2 \leq \glip\|\xb-\yb\|_2  .
\end{align*}
\end{assumption}

\begin{assumption}[Hessian Lipschitz]\label{assumption:hess_lip}
 There exists a constant $\Hlip >0$, such that for all $\xb,\yb$ and $i \in \{1,...,n\}$ 
\begin{align*}
  \|\Hf_i(\xb) - \Hf_i (\yb)\|_2 \leq \Hlip\|\xb-\yb\|_2  .
\end{align*}
\end{assumption}

These two assumptions are mild and widely used in the line of research for finding approximate global minima \citep{Carmon2016Gradient, Carmon2016Accelerated,Agarwal2017Finding,wang2018sample}.
Next we present two key definitions, which play important roles in our analysis:
\begin{definition}
We define the optimal gap as
\begin{align}
    \Delta_F = F(\hat{\xb}^0) - \inf_{\xb \in \RR^d} F(\xb).
\end{align}
\end{definition}
\begin{definition}\label{def:mu}
Let $\xold$ be the iterate defined in Algorithm \ref{algorithm:1}, where $s\in \{1, ..., S\}$ and $t\in\{0, ..., T-1\}$. We define $\mu(\xnew)$ as follows:
\begin{align}
\mu(\xnew) &= \max \big\{\|\dF(\xnew)\|_2^{3/2}, -\big(\lambda_{\min} \big(\HF(\xnew)\big)\big)^3\Mst^{-3/2}\big\}.
\end{align}
\end{definition}

Definition \ref{def:mu} appears in \cite{Nesterov2006Cubic} with a slightly different form, which is used to describe how much a point $\xnew$ is similar to a true local minimum. 
Recall the definition of approximate local minima in \eqref{eq:def_localminima}, it is easy to show the following fact: if $\Mst = \bigO(\Hlip)$ holds for any $s,t$, then $\xnew$ is an $(\epsilon, \sqrt{\Hlip\epsilon})$-approximate local minimum if and only if $\mu(\xnew) = \bigO(\epsilon^{3/2})$. We note that similar argument is also made in \cite{zhou2018stochastic}.

From now on, we will focus on bounding $\mu(\xnew)$, which is equivalent to finding the approximate local minimum. The following theorem spells out the upper bound of $\mu(\xnew)$.
\begin{theorem}\label{thm:1}
Under Assumptions \ref{assumption:grad_lip} and \ref{assumption:hess_lip}, suppose that $n>10, \Mst>2\Hlip$ and $D_h>25\log d$. Let $\alphast, \betast>0$ be arbitrary chosen parameters, and $\Gst$ and $ \cold$ are positive parameters satisfying following induction equations for all $s\in \{1, ..., S\}$ and  $t\in\{0, ..., T-1\}$:
\begin{align}
    \Gst &=\frac{\Mst - 12\cnew(1+2\alphast+\betast)}{12\Mst^{3/2}}, \label{def_gst}\\
    \cold &= \frac{(\constant_1+\Gst\Mst^{1/2})\Hlip^{3/2}}{\Mst^{1/2}}\bigg(\bigg[\frac{4L_1^2}{D_gL_2^2}\bigg]^{3/4}+\frac{\constant_h\Hlip^{3/2}(\log d)^{3/2}}{\Mst^{3/2}D_h^{3/2}}\bigg)+\cnew\bigg(1+\frac{1}{\alphast^2}+\frac{2}{\betast^{1/2}}\bigg),\label{def_cst}\\
   c_{s,T} &= 0,\notag
\end{align}
where $\constant_h, \constant_1$ are absolute constants. Then the output of Algorithm \ref{algorithm:1} satisfies the following inequality
\begin{align}
     \EE \mu(\xout) \leq \frac{\Delta_F}{ST\minG},\label{bound_mu}
\end{align}
where $\minG$ is defined as follows
\begin{align*}
    \minG = \min_{s\in \{1, ..., S\}, t\in\{0, ..., T-1\}} \frac{\Gst}{\constant_\mu}, 
\end{align*}
and $\constant_\mu$ is an absolute constant.
\end{theorem}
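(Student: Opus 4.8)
The plan is to establish a one-step decrease property for a carefully designed potential function, then telescope it across all iterations and epochs. The potential at iterate $\xold$ will be $F(\xold) + \cold \|\xold - \rx\|_2^3$, where $\cold$ is exactly the quantity in \eqref{def_cst}; the term $c_{s,T}=0$ encodes that the potential reduces to just $F$ at the end of each epoch, which is what allows the bookkeeping to work when we pass from $\hat\xb^{s-1}$ to $\hat\xb^s$. The role of $\Gst$ in \eqref{def_gst} is to be the per-step coefficient multiplying $\mu(\xnew)$ in the decrease inequality, so that after telescoping we get $\sum_{s,t}\Gst\, \EE\mu(\xnew) \lesssim \Delta_F$, and then bounding $\EE\mu(\xout)$ by the average yields \eqref{bound_mu}.

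First I would recall the standard cubic-subproblem facts (as in \cite{Nesterov2006Cubic}): from the first-order optimality condition for $\hb_t^s = \argmin m_t^s(\hb)$ one gets $\vst + \Ust \hst + \tfrac{\Mst}{2}\|\hst\|_2 \hst = 0$ and $\Ust + \tfrac{\Mst}{2}\|\hst\|_2 I \succeq 0$, hence bounds of the form $m_t^s(\hst) \le -\tfrac{\Mst}{12}\|\hst\|_2^3$. Then using Hessian-Lipschitzness (Assumption~\ref{assumption:hess_lip}) to control $F(\xnew) - F(\xold) - \la \dF(\xold),\hst\ra - \tfrac12 \la \HF(\xold)\hst,\hst\ra$, and relating $\dF(\xold),\HF(\xold)$ to the semi-stochastic surrogates $\vst,\Ust$, I would obtain $F(\xnew) \le F(\xold) - c\,\Mst\|\hst\|_2^3 + (\text{error terms in } \|\vst - \dF(\xold)\|_2 \text{ and } \|\Ust - \HF(\xold)\|_2)$. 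The next key step is to lower-bound $\mu(\xnew)$ in terms of $\|\hst\|_2^3$ plus the same variance error terms: since $\dF(\xnew)$ and $\lambda_{\min}(\HF(\xnew))$ are controlled by $\|\hst\|_2$ (via optimality of $\hst$ and Hessian-Lipschitzness), one shows $\mu(\xnew) \le C(\|\hst\|_2^3 + \text{errors})$, so that a decrease of order $\|\hst\|_2^3$ in $F$ translates into progress on $\mu$.

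The variance terms are where the batch-size choices do their work. Conditioning on $\xold$, I would use a vector Bernstein/Azuma bound for $\vst$ and a matrix Bernstein bound (this is why $D_h > 25\log d$ appears) for $\Ust$, combined with $\|\df_i(\xold) - \df_i(\rx)\|_2 \le L_1\|\xold - \rx\|_2$ and $\|\Hf_i(\xold) - \Hf_i(\rx)\|_2 \le L_2\|\xold - \rx\|_2$, to get $\EE\|\vst - \dF(\xold)\|_2^{3/2} \lesssim (L_1\|\xold-\rx\|_2 / \sqrt{\bst})^{3/2}$ and similarly $\EE\|\Ust - \HF(\xold)\|_2^3 \lesssim (\sqrt{\log d}\, L_2\|\xold - \rx\|_2/\sqrt{\Bst})^3$. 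Substituting $\bst = D_g/\|\xold - \rx\|_2^2$ and $\Bst = D_H$ makes the $\vst$-error of order $(L_1^2/(D_g L_2^2))^{3/4} L_2^{3/2}\|\xold-\rx\|_2^3$ — i.e.\ proportional to $\|\xold-\rx\|_2^3$ with a coefficient matching the first bracket in \eqref{def_cst}, and the $\Ust$-error of order $(\log d)^{3/2}/D_H^{3/2}$ times $\|\xold - \rx\|_2^3$ — matching the second bracket. Then $\|\xold - \rx\|_2^3$ is itself absorbed into the potential's $\cold\|\xold-\rx\|_2^3$ term, and since $\xold - \rx = \sum_{\tau<t}\hb_\tau^s$, expanding $\|\xnew - \rx\|_2^3 = \|(\xold-\rx) + \hst\|_2^3$ and applying Young's inequality with the free parameters $\alphast,\betast$ produces cross terms of the form $\|\xold-\rx\|_2^3(1 + 1/\alphast^2 + 2/\betast^{1/2})$ and $\|\hst\|_2^3(1 + 2\alphast + \betast)$ — precisely the combinations appearing in \eqref{def_gst} and \eqref{def_cst}.

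Assembling: the one-step inequality reads, in expectation conditional on $\xold$,
\begin{align*}
\EE\big[F(\xnew) + \cnew\|\xnew - \rx\|_2^3\big] \le F(\xold) + \cold\|\xold - \rx\|_2^3 - \Gst\,\EE\mu(\xnew),
\end{align*}
where the definitions \eqref{def_gst}–\eqref{def_cst} are exactly chosen so that the $\|\hst\|_2^3$ coefficient is $\ge \Gst$ after accounting for what $m_t^s(\hst)\le -\tfrac{\Mst}{12}\|\hst\|_2^3$ gives, and the $\|\xold-\rx\|_2^3$ coefficient on the right telescopes correctly into $\cold$; the hypotheses $\Mst > 2L_2$ and $n>10$ ensure $\Gst > 0$ and that the absorbed lower-order terms (e.g.\ from $\Mst \le O(L_2)$ not being assumed, so one keeps $\Hlip^{3/2}/\Mst^{1/2}$ factors) stay controlled. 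Summing over $t=0,\dots,T-1$ within an epoch, the $\|\xb_0^s - \rx\|_2^3 = 0$ start and $c_{s,T}=0$ end kill the boundary terms, giving $\sum_t \Gst\EE\mu(\xnew) \le F(\hat\xb^{s-1}) - \EE F(\hat\xb^s)$; summing over $s=1,\dots,S$ telescopes to $\sum_{s,t}\Gst\EE\mu(\xnew) \le \Delta_F$. Finally, since $\xout$ is uniform over the $ST$ iterates, $\EE\mu(\xout) = \frac{1}{ST}\sum_{s,t}\EE\mu(\xnew) \le \frac{\Delta_F}{ST\,\min_{s,t}\Gst} \cdot C_\mu^{-1}\cdot C_\mu$, which after folding the absolute constant $C_\mu$ into the definition of $\minG$ gives \eqref{bound_mu}.

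\textbf{Main obstacle.} The delicate part is making the induction on $(\cold,\Gst)$ self-consistent with strictly positive $\Gst$ while simultaneously keeping \emph{all} error contributions genuinely proportional to $\|\xold - \rx\|_2^3$ (so they can be charged to the potential) rather than to a lower power of $\|\xold-\rx\|_2$ that would not telescope — this is exactly why $\bst$ is chosen to scale as $1/\|\xold-\rx\|_2^2$, and verifying that the exponents ($3/4$ on the gradient bracket, $3/2$ on the Hessian bracket) come out right requires careful tracking through the cube and the matrix Bernstein inequality. A secondary subtlety is handling the $t=0$ step of each epoch separately, where $\vb_0^s = \rg$ and $\Ub_0^s = \rH$ are exact, so there is no variance error and the bound is cleaner.
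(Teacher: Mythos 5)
Your proposal follows essentially the same route as the paper: a Lyapunov potential $F(\xold)+\cold\|\xold-\rx\|_2^3$, a one-step descent lemma that trades a $-\Gst\,\EE\mu(\xnewright)$ term against the potential decrease, Young's inequality with free parameters $\alphast,\betast$ to expand $\|\xnew-\rx\|_2^3$, moment bounds on the semi-stochastic gradient and Hessian errors scaling with $\|\xold-\rx\|_2^3$, and telescoping with $\xb_0^s=\rx$ and $c_{s,T}=0$. The only minor inaccuracy is labeling the moment bounds as ``vector Bernstein/Azuma'' and ``matrix Bernstein''; the paper in fact uses a simple Lyapunov-type moment reduction $\EE\|\tfrac1N\sum\ab_i\|_2^{\alpha}\le N^{-\alpha/2}(\EE\|\ab_i\|_2^2)^{\alpha/2}$ for the gradient and a matrix Khintchine-based $q$-th moment bound (not a tail bound) for the Hessian, which is where the $\log d$ enters, but this does not change the substance of your argument.

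(Where I wrote $\xnewright$ above I mean $\xnew$.)
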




\begin{remark}
Theorem \ref{thm:1} suggests that with a fixed number of inner loops $T$, if we run Algorithm \ref{algorithm:1} for sufficiently large $S$ epochs, then we have a point sequence $\xb_i$ where $\EE\mu(\xb_i)\rightarrow 0$. That being said, $\xb_i$ will converge to a local minimum, which is consistent with the convergence analysis in existing related work \citep{Nesterov2006Cubic, kohler2017sub, wang2018sample}. 
\end{remark}

Now we give a specific choice of parameters mentioned in Theorem \ref{thm:1} to derive the total Hessian sample complexity of Algorithm \ref{algorithm:1}.

\begin{corollary}\label{coro:result_come}
Under the same conditions as in Theorem \ref{thm:1}, let batch size parameters satisfy $D_g = 4L_1^2/L_2^2\cdot n^{4/3}$ and $D_h = \log d\cdot(\constant_h\cdot n)^{2/3}$. Set the inner loop parameter $T = n^{1/3}$ and cubic penalty parameter $\Mst = \constant_m \Hlip$, where $\constant_m$ is an absolute constant. Then the output $\xout$ from Algorithm \ref{algorithm:1} is a $(\epsilon, \sqrt{\Hlip\epsilon})$-approximate local minimum after 
\begin{align}
    \tbigO\bigg(n+\frac{\Delta_F\sqrt{\Hlip}}{\epsilon^{3/2}}\cdot n^{2/3}\bigg)
\end{align}
stochastic Hessian evaluations. 
\end{corollary}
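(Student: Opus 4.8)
The plan is to instantiate Theorem~\ref{thm:1} with the parameters in the statement, show that with a suitable choice of the free parameters $\alphast,\betast$ the induction equations \eqref{def_gst}--\eqref{def_cst} admit a solution with $\Gst=\Omega(L_2^{-1/2})$ uniformly in $s,t$ (hence $\gamma_n=\Omega(L_2^{-1/2})$), then read off how many epochs $S$ are needed so that \eqref{bound_mu} gives $\EE\mu(\xout)\le\epsilon^{3/2}$, and finally count the stochastic Hessian evaluations.

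First I would substitute $\Mst=C_mL_2$, $D_g=4L_1^2L_2^{-2}n^{4/3}$ and $D_h=\log d\,(C_hn)^{2/3}$ into \eqref{def_cst}. With these choices $[4L_1^2/(D_gL_2^2)]^{3/4}=n^{-1}$ and $C_hL_2^{3/2}(\log d)^{3/2}\Mst^{-3/2}D_h^{-3/2}=C_m^{-3/2}n^{-1}$, so the parenthesis in the first term of $c_{s,t}$ is $\Theta(n^{-1})$; and since $c_{s,t+1}\ge 0$ one has $0<\Gst\le (12\sqrt{\Mst})^{-1}$ always, so $C_1+\Gst\sqrt{\Mst}=O(1)$ and the whole first term of $c_{s,t}$ is at most $K_0L_2/n$ for an absolute constant $K_0$. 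Choosing $\alphast=n^{1/6}$ and $\betast=n^{2/3}$ makes the multiplicative factor $1+\alphast^{-2}+2\betast^{-1/2}=1+O(n^{-1/3})$, so running \eqref{def_cst} backward from $c_{s,T}=0$ over $T=n^{1/3}$ steps yields $c_{s,t}\le K_0L_2n^{-1}\cdot T\cdot(1+O(n^{-1/3}))^{T}=O(L_2n^{-2/3})$ for every $s,t$. Plugging this back, $12c_{s,t+1}(1+2\alphast+\betast)=O(L_2n^{-2/3}\cdot n^{2/3})=O(L_2)\le\tfrac12\Mst$ once the absolute constant $C_m$ is taken large enough, and then \eqref{def_gst} gives $\Gst\ge\tfrac12\Mst/(12\Mst^{3/2})=(24\sqrt{C_mL_2})^{-1}=\Omega(L_2^{-1/2})$, hence $\gamma_n=\Omega(L_2^{-1/2})$. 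The hypotheses $n>10$, $\Mst>2L_2$ and $D_h>25\log d$ of Theorem~\ref{thm:1} hold automatically for $C_m>2$ and $n$ above an absolute constant.

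With $\gamma_n=\Omega(L_2^{-1/2})$, \eqref{bound_mu} reads $\EE\mu(\xout)\le\Delta_F/(ST\gamma_n)=O(\Delta_F\sqrt{L_2}/(Sn^{1/3}))$, so taking $S=\max\{1,\Theta(\Delta_F\sqrt{L_2}\,\epsilon^{-3/2}n^{-1/3})\}$ forces $\EE\mu(\xout)\le\epsilon^{3/2}$; since $\Mst=O(L_2)$, the equivalence noted right after Definition~\ref{def:mu} turns this into the assertion that $\xout$ is an $(\epsilon,\sqrt{L_2\epsilon})$-approximate local minimum. For the Hessian count, each epoch spends $n$ stochastic Hessian evaluations to form $\Hb^s$ and $O(D_h)$ on each of the $T-1$ inner iterations, i.e.\ $O(n+TD_h)=O(n+n^{1/3}\log d\,(C_hn)^{2/3})=\tbigO(n)$ per epoch; multiplying by $S$ gives $\tbigO(Sn)=\tbigO\big(n+\Delta_F\sqrt{L_2}\,\epsilon^{-3/2}n^{2/3}\big)$, which is the claimed bound.

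The main obstacle is the step showing $\Gst$ stays bounded below: the quantity $12c_{s,t+1}(1+2\alphast+\betast)$ subtracted in the numerator of $\Gst$ is the product of $c_{s,t+1}$, which the backward recursion only controls up to $O(L_2n^{-2/3})$, and the amplification factor $1+2\alphast+\betast$, which is of order $n^{2/3}$, so the product is only $O(L_2)$---the same order as $\Mst$ itself---and one must verify that the absolute constants ($C_m$, $C_h$, $C_1$, and those hidden in the $O(\cdot)$'s) line up so that it is a strict fraction of $\Mst$. This balancing is exactly what forces the exponents $n^{4/3}$, $n^{2/3}$, $n^{1/3}$ for $D_g$, $D_h$, $T$: they trade off the $O(L_2/n)$ per-step increment of the recursion against the number $T$ of inner steps and against the $\Theta(n^{2/3})$ blow-up from $\betast$, which is in turn constrained to be $\Omega(n^{2/3})$ so that $(1+O(\betast^{-1/2}))^{T}=O(1)$.
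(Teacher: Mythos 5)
Your proposal is correct and follows essentially the same route as the paper: instantiate Theorem~\ref{thm:1} with the given parameters and $\alphast=n^{1/6}$, $\betast=n^{2/3}$, solve the backward recursion for $c_{s,t}$ from $c_{s,T}=0$ to obtain $c_{s,t}=O(L_2 n^{-2/3})$, deduce $\Gst=\Omega(L_2^{-1/2})$, and then count $Sn + STD_h$ Hessian queries with $S=\Theta(1+\Delta_F\sqrt{L_2}\,\epsilon^{-3/2}n^{-1/3})$. The paper packages the recursion step as its Lemma~\ref{thm:2}, first substituting the $\Gst$ formula into the $c_{s,t}$ equation to decouple the two before bounding $c_{s,t}$, whereas you shortcut this by observing $c_{s,t+1}\ge 0$ forces $\Gst\Mst^{1/2}\le 1/12$ directly; both close the same circularity, and the balance of constants you flag as the main obstacle is exactly what the paper's Lemma~\ref{thm:2} works out explicitly.
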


Now we provide a comprehensive comparison between our algorithm and other related algorithms in Table \ref{table:perbatch}. The algorithm proposed in \cite{wang2018sample} has two versions: sample with replacement and sample without replace. For the completeness, we present both versions in \cite{wang2018sample}.
From Table \ref{table:perbatch} we can see that Lite-SVRC strictly outperforms CR by a factor of $n^{1/3}$. 
Lite-SVRC also outperforms SCR when $\epsilon = \bigO(n^{-2/3})$, which suggests that the variance reduction scheme makes Lite-SVRC perform better in the high accuracy regime. More importantly, our proposed Lite-SVRC does not rely on the assumption that the function $F$ is Lipschitz continuous, which is required by the algorithm proposed in \cite{wang2018sample}. So in terms of Hessian sample complexity, our algorithm directly improves that of  \cite{wang2018sample} by a factor of $n^{2/33}$.

\section{Experiments}\label{sec:exp}



In this section, we conduct experiments on real world datasets to support our theoretical analysis of the proposed $\SVRCp$ algorithm. Following \cite{zhou2018stochastic}, we investigate two nonconvex problems on three different datasets, \emph{a9a} (sample size: $32,561$, dimension: $123$), \emph{ijcnn1} (sample size: $35,000$, dimension: $22$) and \emph{covtype} (sample size: $581,012$, dimension: $54$), which are all common datasets used in machine learning. 


\begin{table}[ht]
\caption{Comparisons of per-iteration and total sample complexities of Hessian evaluations for different algorithms.\label{table:perbatch}}
\begin{small}
\begin{center}
\begin{tabular}{cccccc}
\toprule
\multirow{ 2}{*}{\textbf{algorithm}} & \multirow{ 2}{*}{\textbf{per-iteration}} & \multirow{ 2}{*}{\textbf{total}} & \textbf{function} & \textbf{gradient} & \textbf{Hessian} \\
&&&\textbf{Lipschitz}&\textbf{Lipschitz}&\textbf{Lipschitz}\\
\midrule
CR & \multirow{2}{*}{$\bigO(n)$}  & \multirow{2}{*}{$\bigO\Big(\frac{n}{\epsilon^{3/2}}\Big)$} &  \multirow{2}{*}{No} & \multirow{2}{*}{No} &  \multirow{2}{*}{Yes}\\
\citet{Nesterov2006Cubic} & & & &\\
SCR & \multirow{3}{*}{$\tbigO\Big(\frac{1}{\epsilon}\Big)$}  & \multirow{3}{*}{$\tbigO\Big(\frac{1}{\epsilon^{5/2}}\Big)$} &  \multirow{3}{*}{No\footnote[3]} & \multirow{3}{*}{Yes} &  \multirow{3}{*}{Yes}\\
\citet{kohler2017sub} & & & &\\
\citet{xu2017second} & & & &\\
SVRC  & \multirow{2}{*}{$\tbigO(n^{4/5})$} & \multirow{2}{*}{$\tbigO\Big(n+\frac{n^{4/5}}{\epsilon^{3/2}}\Big)$} &  \multirow{2}{*}{No} & \multirow{2}{*}{No} &  \multirow{2}{*}{Yes}\\
\cite{zhou2018stochastic}\footnote[4]  & & & &\\ 
SVRC$_{\text{with}}$  & \multirow{2}{*}{$\tbigO\Big(\frac{\|\xold - \rx\|_2^2}{\|\hst\|_2^2}\Big)$} & \multirow{2}{*}{$\tbigO\Big(n+\frac{n^{3/4}}{\epsilon^{3/2}}\Big)$} &  \multirow{2}{*}{Yes} & \multirow{2}{*}{Yes} &  \multirow{2}{*}{Yes}\\
\cite{wang2018sample}\footnote[5] & &&&\\
SVRC$_{\text{without}}$   
& \multirow{2}{*}{$\tbigO\Big(\Big(\frac{1}{n}+\frac{\|\hst\|_2^2}{\|\xold -\rx\|_2^2 }\Big)^{-1}\Big)$} & \multirow{2}{*}{$\tbigO\Big(n+\frac{n^{8/11}}{\epsilon^{3/2}}\Big)$}&  \multirow{2}{*}{Yes} & \multirow{2}{*}{Yes} &  \multirow{2}{*}{Yes} \\
\cite{wang2018sample}\footnote[5]  & &&&\\
$\SVRCp$   & \multirow{2}{*}{$\tbigO(n^{2/3})$}& \multirow{2}{*}{$\tbigO\Big(n+\frac{n^{2/3}}{\epsilon^{3/2}}\Big)$} &  \multirow{2}{*}{No} & \multirow{2}{*}{Yes} &  \multirow{2}{*}{Yes}\\
(This paper)  & & & &\\
\bottomrule
\end{tabular}
\end{center}
\end{small}
\end{table}

\footnotetext[3]{Although the refined SCR in \cite{xu2017newton} does not need function Lipschitz, the original SCR in \cite{kohler2017sub} needs it.}
\footnotetext[4]{We adapt this result directly from the analysis of total second-order oracle calls from \cite{zhou2018stochastic}.} 
\footnotetext[5]{In \cite{wang2018sample}, both algorithms need to calculate $\mineig(\HF(\xold))$ at each iteration to decide whether the algorithm should continue, which adds additional $\bigO(n)$ Hessian sample complexity. We choose not to include this into the results in the table.}

\subsection{Baseline Algorithms}
To evaluate our proposed algorithm, we compare the proposed $\SVRCp$ with the following $7$ baseline algorithms: (1) trust-region Newton methods (denoted by TR) \cite{conn2000trust}; (2) Adaptive Cubic regularization \citep{Cartis2011Adaptive,Cartis2011Adaptive2}; (3) Subsampled Cubic regularization \citep{kohler2017sub}; (4) Gradient Cubic regularization \citep{Carmon2016Gradient}; (5) Stochastic Cubic regularization \citep{tripuraneni2017stochastic}; (6) SVRC proposed in \cite{zhou2018stochastic}; (7) SVRC-without proposed in \cite{wang2018sample}. Note that there are two versions of SVRC algorithm proposed in \cite{wang2018sample}, and the one based on sampling without replacement performs better in both theory and experiments, we therefore only compare with this one, which is denoted by SVRC-without. 

\subsection{Implementation Details}
For Subsampled Cubic and SVRC-without, the sample size $B_k$ is dependent on $\|\hb_k\|_2$ \citep{kohler2017sub} and $\Bst$ is dependent on $\|\hst\|_2$ \citep{wang2018sample}, which make these two algorithms implicit algorithms. To address this issue, we follow the suggestion in \cite{kohler2017sub, wang2018sample} and use $\|\hb_{k-1}\|_2$ and $\|\hb_{s,t-1}\|_2$ instead of $\|\hb_k\|_2$ and $\|\hst\|_2$. Furthermore, we choose the penalty parameter $\Mst$ for SVRC, SVRC-without and Lite-SVRC as constants which are suggested by the original papers of these algorithms. Finally, to solve the CR sub-problem in each iteration, we choose to solve the sub-problem approximately in the Krylov sub-space spanned by Hessian related vectors, as used by \cite{kohler2017sub}. 


In the experiment, we choose two nonconvex regression problem as our objectives. Both of them consist of a loss function (can be nonconvex) and the following nonconvex regularizer
\begin{align}\label{eq:noncon_penalty}
    g(\lambda, \alpha, \xb) = \lambda\cdot \sum_{i=1}^d \frac{(\alpha x_i)^2}{1+(\alpha x_i)^2},
\end{align}
where $\lambda, \alpha$ are the control parameters and $x_i$ is the $i$-th coordinate of $\xb$. This regularizer has been widely used in nonconvex regression problem, which can be regarded as a special example of robust nonlinear regression \citep{reddi2016fast,kohler2017sub,zhou2018stochastic, wang2018sample}.

\begin{figure*}[ht]
\vskip -0.2in
	\begin{center}
		\subfigure[\textit{a9a}]{\includegraphics[width=0.32\linewidth]{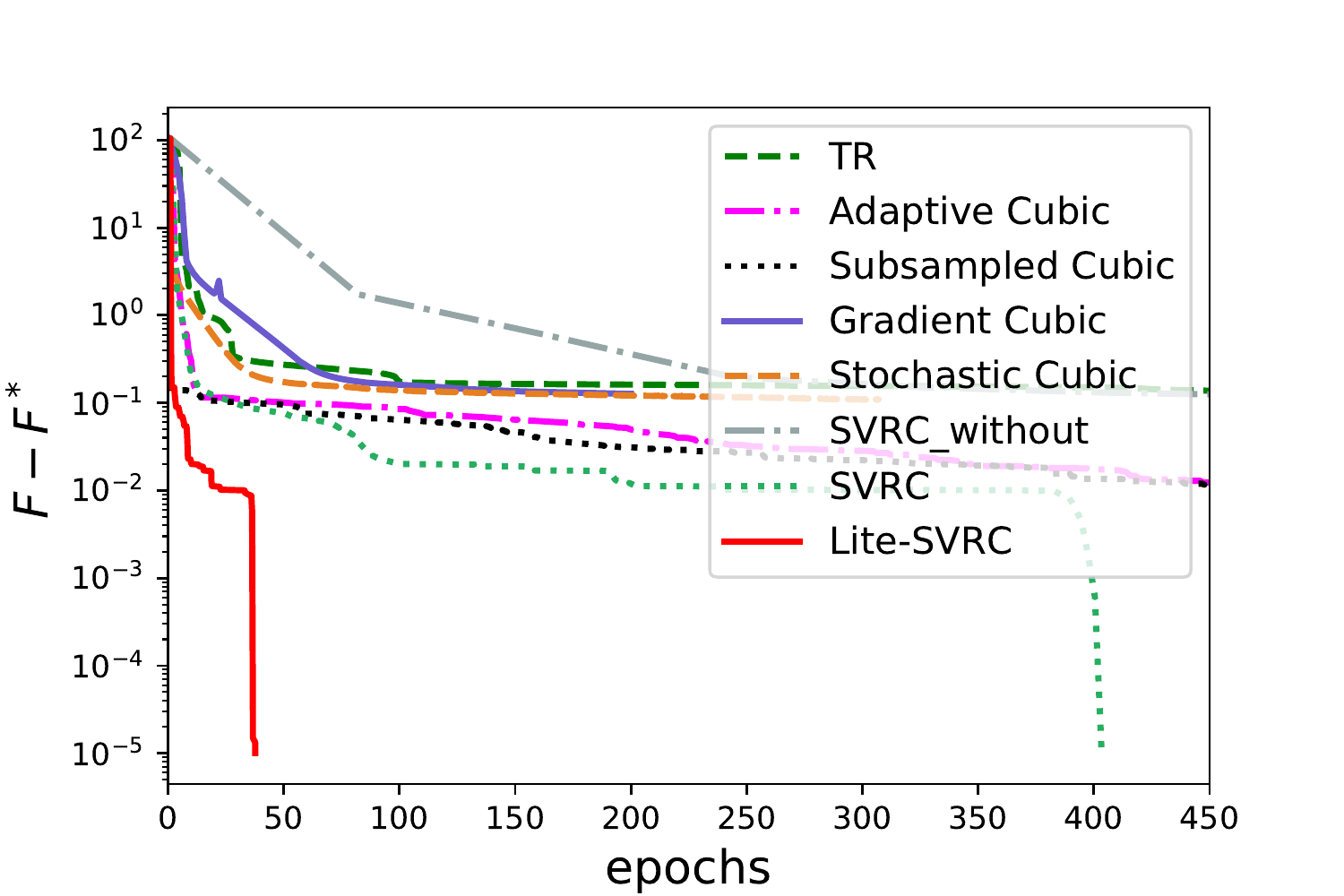}}	
		\subfigure[\textit{ijcnn1}]{\includegraphics[width=0.32\linewidth]{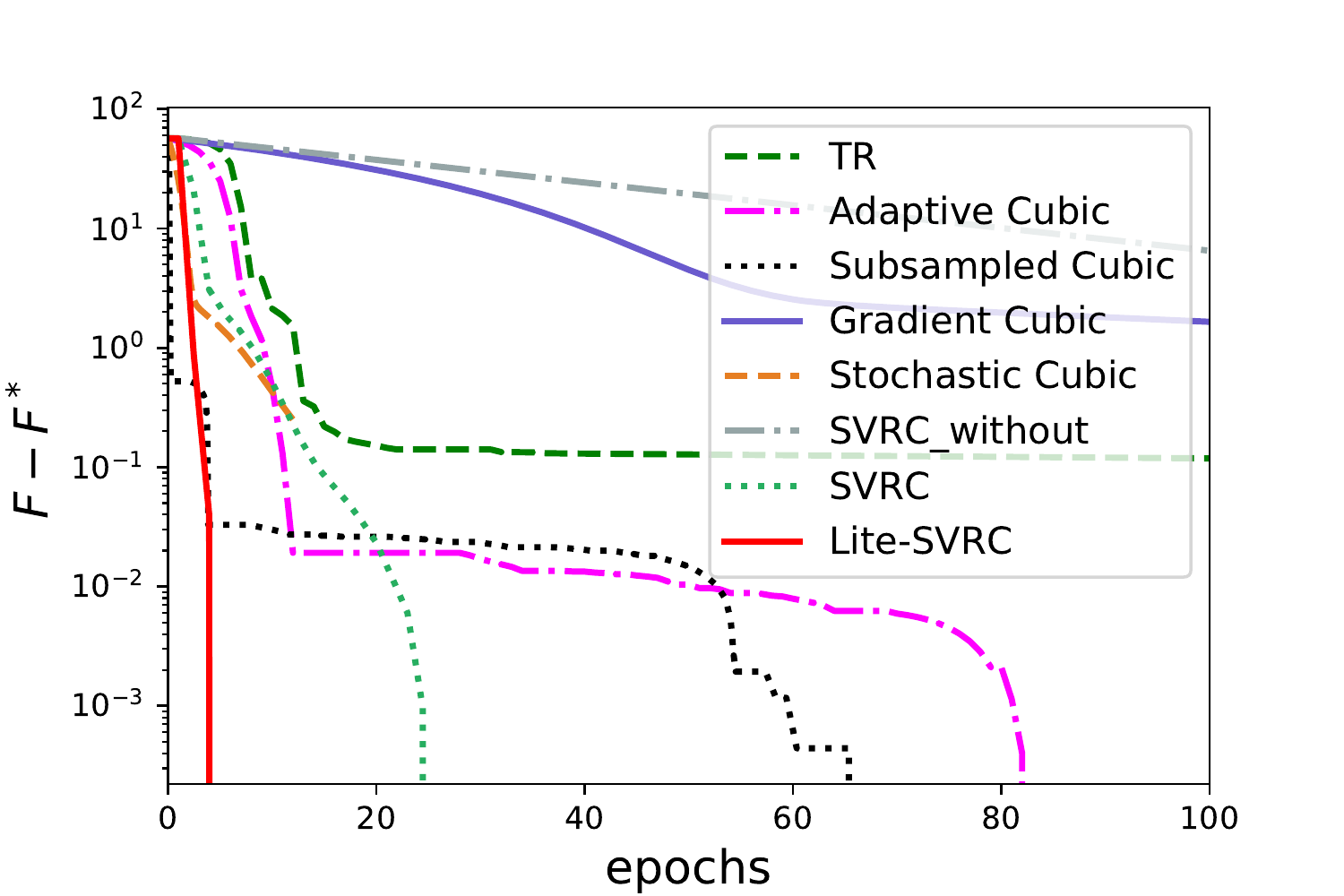}\label{fig:simu_logis_entropy}}
		\subfigure[\textit{covtype}]{\includegraphics[width=0.32\linewidth]{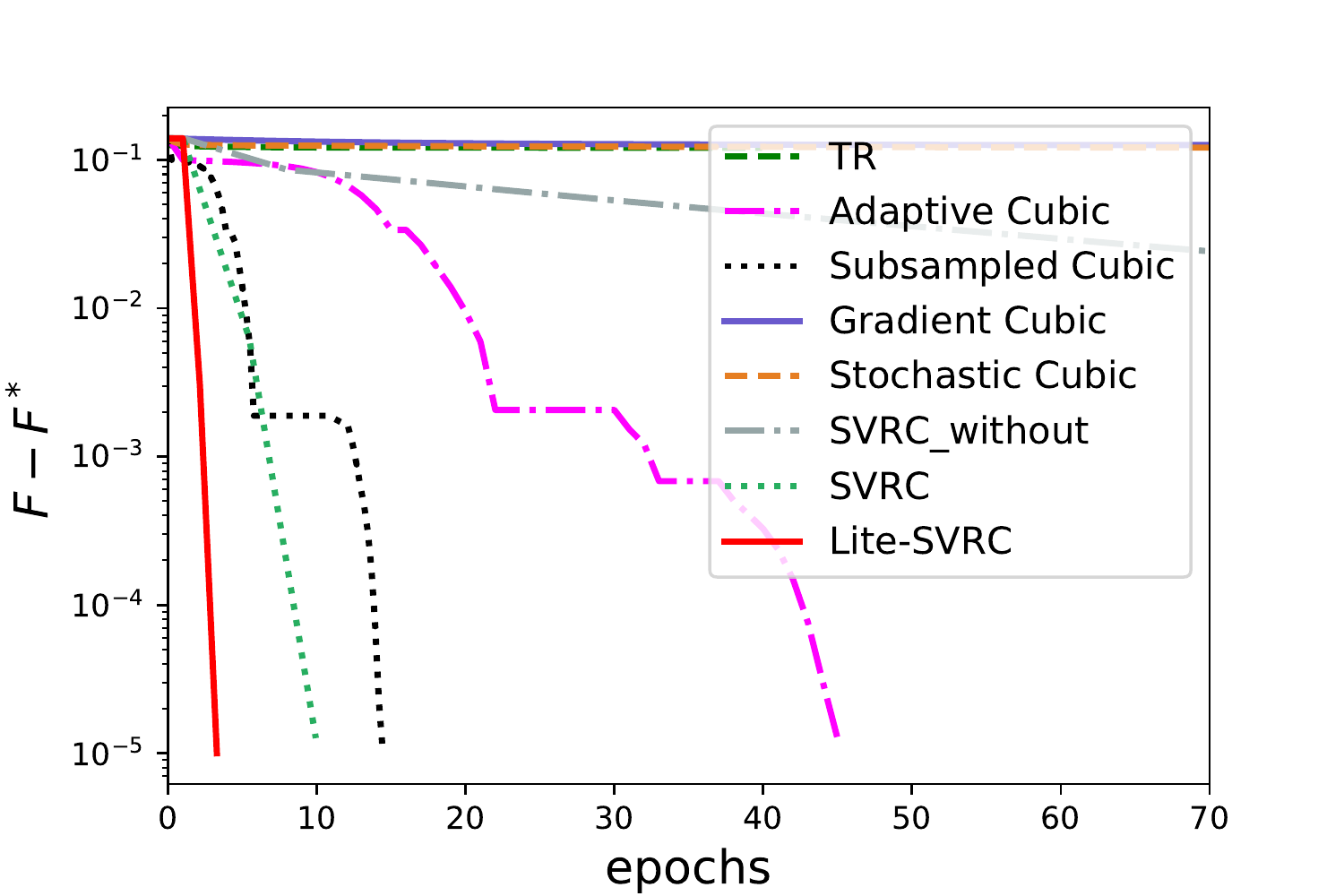}}
		
		\subfigure[\textit{a9a}]{\includegraphics[width=0.32\linewidth]{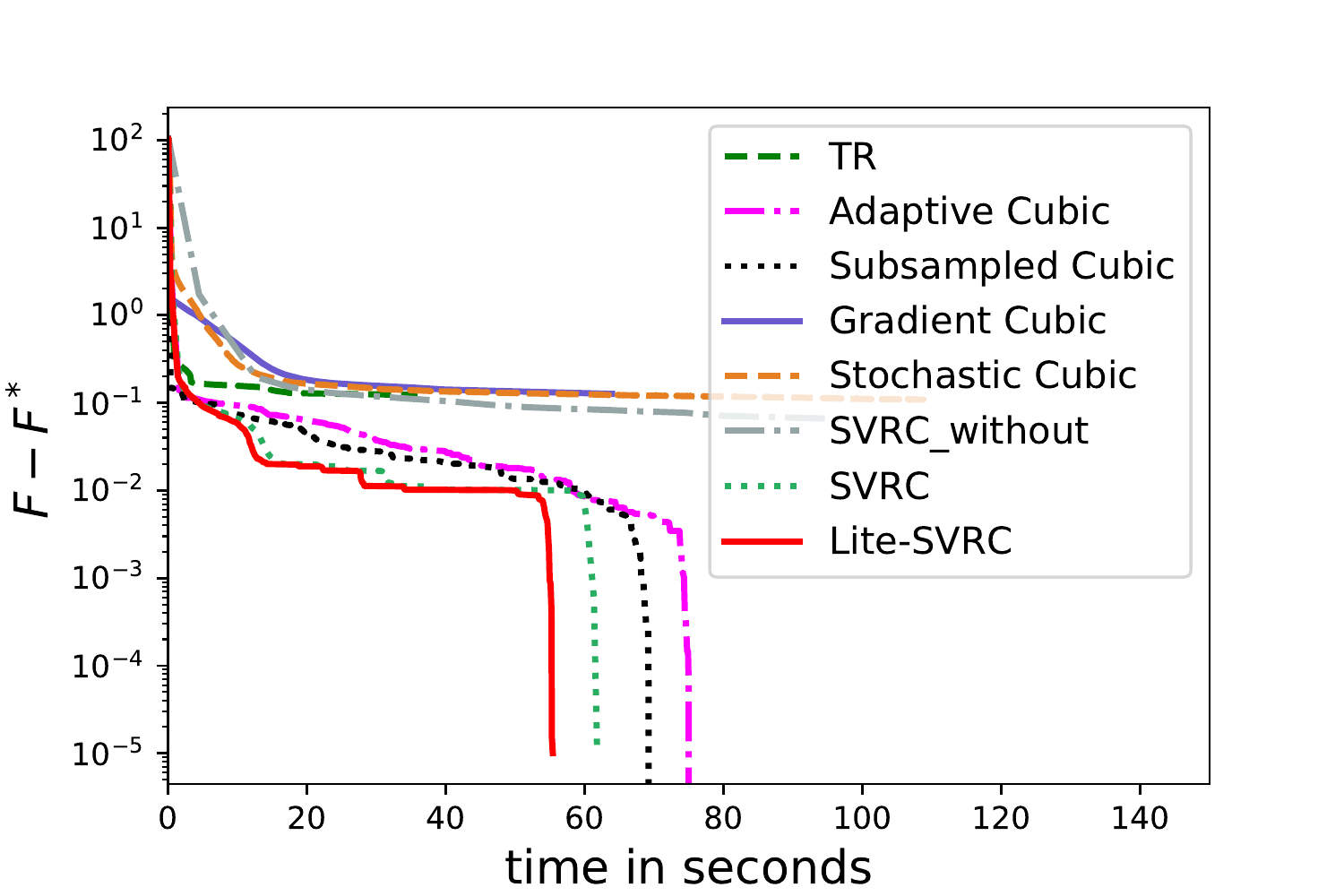}}
			\subfigure[\textit{ijcnn1}]{\includegraphics[width=0.32\linewidth]{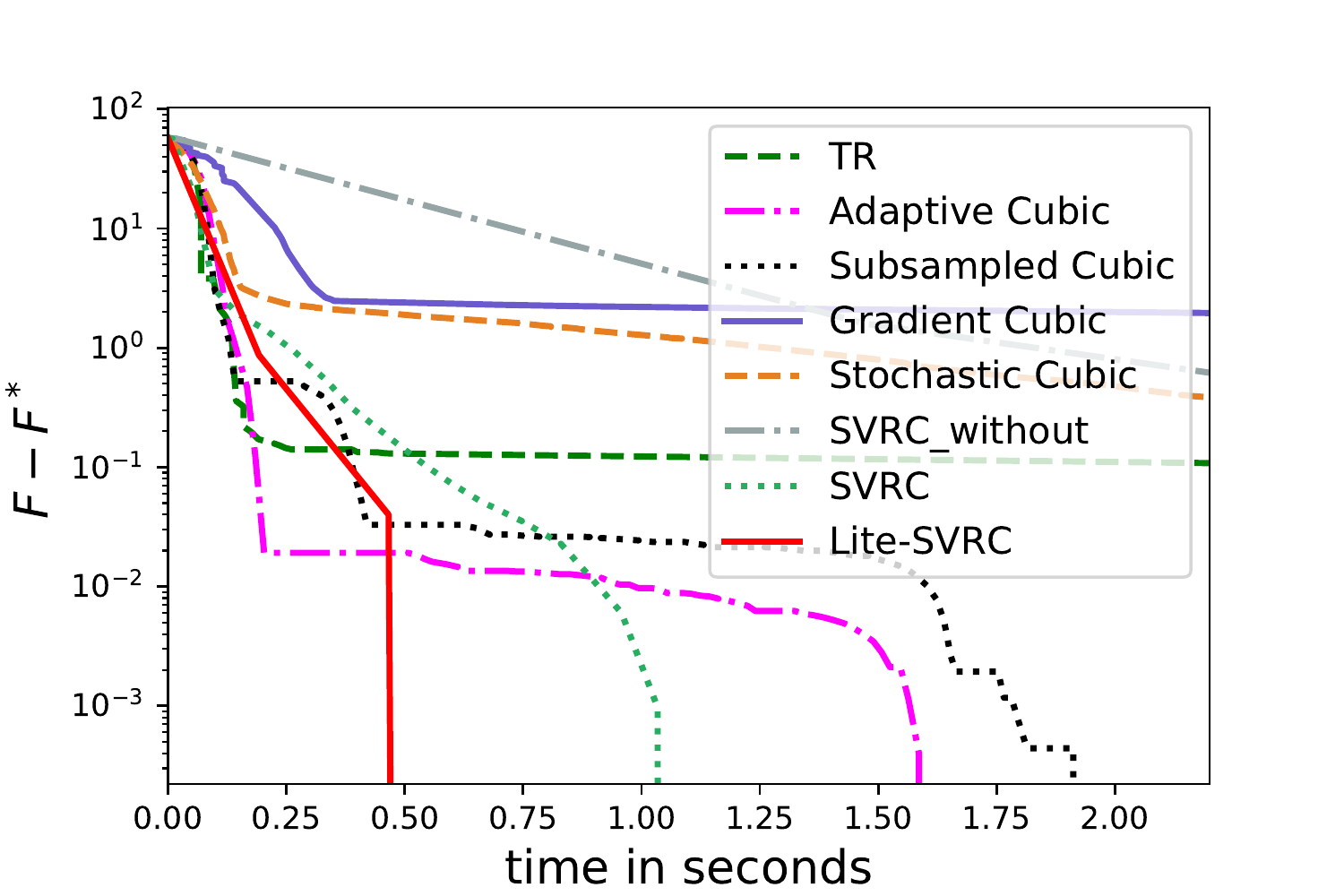}\label{fig:simu_logis_entropy}}
    	\subfigure[\textit{covtype}]{\includegraphics[width=0.32\linewidth]{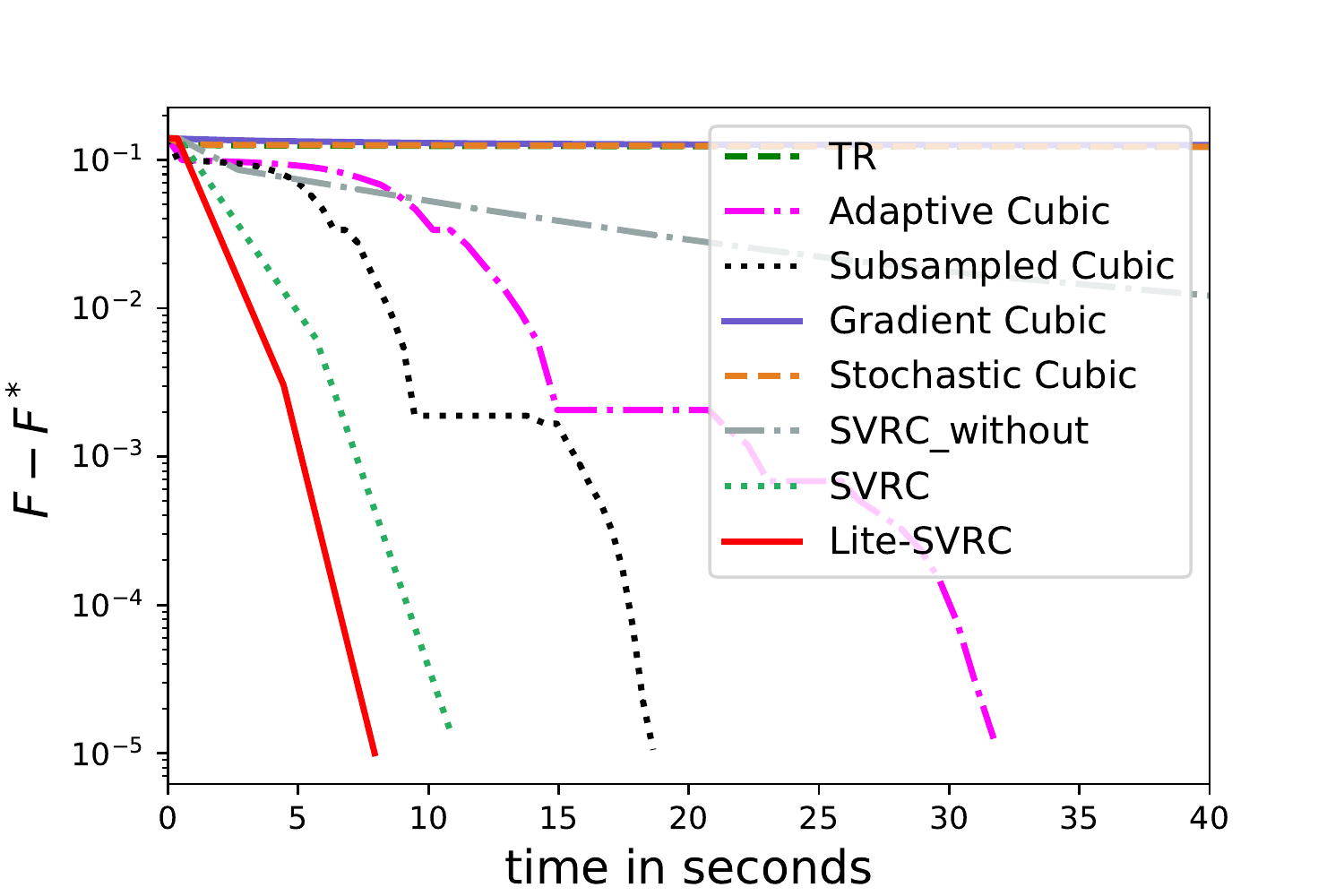}}
		
	\caption{Function value gap of different algorithms for nonconvex regularized logistic regression problems on different datasets. (a)-(c) are plotted w.r.t. Hessian sample complexity. (d)-(e) are plotted w.r.t. CPU runtime.\label{fig:reg_logistic}}
	\end{center}
\end{figure*}
\subsection{Logistic Regression with Nonconvex Regularizer}
The first problem is a binary logistic regression problem with a nonconvex regularizer $g(\lambda, \alpha \xb)$. Given training data $\xb_i \in \RR^d$ and label $y_i \in \{0,1\}$, $1 \leq i \leq n$, our goal is to solve the following optimization problem:
\begin{align}
    \min_{\sbb \in \RR^d}& \frac{1}{n}\sum_{i=1}^n\big[y_i\cdot  \log \phi(\sbb^{\top}\xb_i) + (1-y_i)\cdot \log [1- \phi(\sbb^{\top} \xb_i)]\big] + g(\lambda, \alpha, \sbb), 
\end{align}
where $\phi(x) = 1/(1+\exp(-x))$ is the sigmoid function and $\lambda$ and $\alpha$ are the parameters that are used to define the nonconvex regularizers in \eqref{eq:noncon_penalty} and are set differently for each dataset. In detail, we set $\lambda=10^{-3}$ for all three datasets, and set $\alpha=10,50,100$ for \emph{a9a}, \emph{ijcnn1} and \emph{covtype} datasets respectively. 

The experiment results on the binary logistic regression problem are displayed in Figure \ref{fig:reg_logistic}. The first row of the figure shows the plots of function value gap v.s. Hessian sample complexity of all the compared algorithms, and the second row presents the plots of function value gap v.s. CPU runtime (in second) of all the algorithms. It can be seen from Figure \ref{fig:reg_logistic} that Lite-SVRC performs the best among all algorithms regarding both sample complexity of Hessian and runtime on all three datasets, which is consistent with our theoretical analysis. We remark that SVRC performs the second best in most settings in terms of both Hessian sample complexity and runtime. It should also be noted that although SVRC-without is also a variance-reduced method similar to Lite-SVRC and SVRC, it indeed performs much worse than other methods, because as we pointed out in the introduction, it needs to compute the minimum eigenvalue of the Hessian in each iteration, which actually makes the Hessian sample complexity even worse than Subsampled Cubic, let alone the runtime complexity. 

\begin{figure*}[hbt]
	\begin{center}
		\subfigure[\textit{a9a}]{\includegraphics[width=0.32\linewidth]{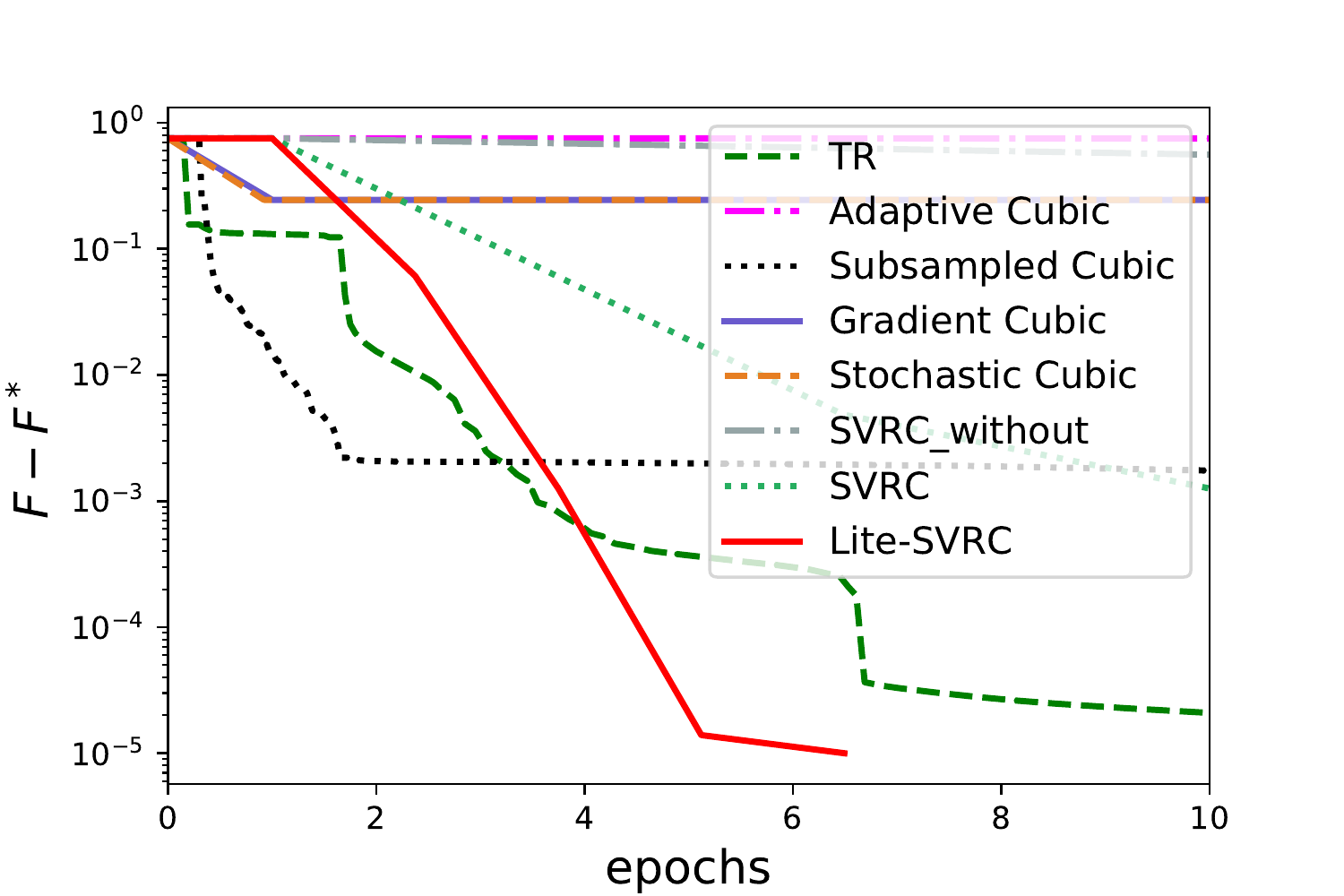}}	
		\subfigure[\textit{ijcnn1}]{\includegraphics[width=0.32\linewidth]{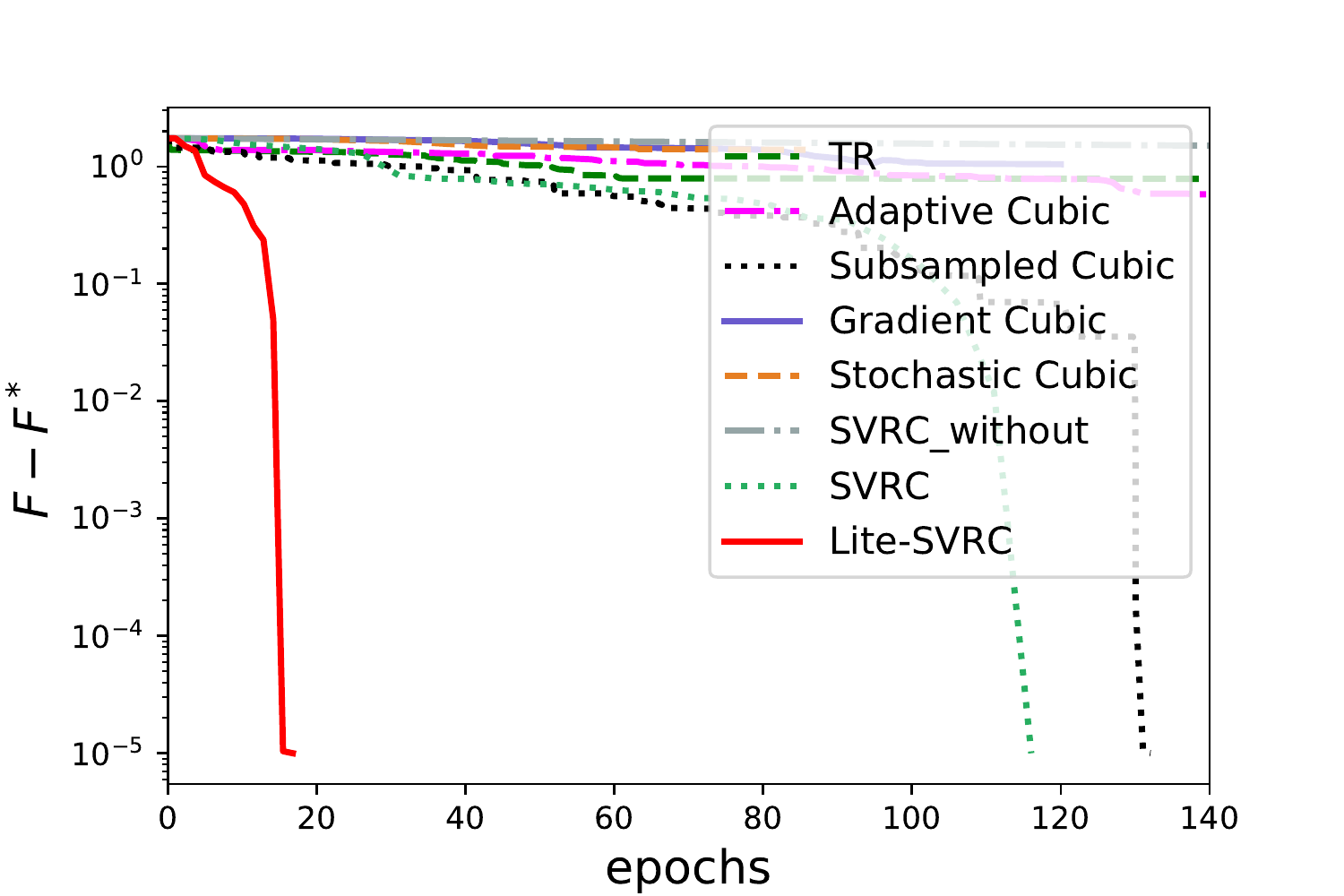}\label{fig:simu_logis_entropy}}
		\subfigure[\textit{covtype}]{\includegraphics[width=0.32\linewidth]{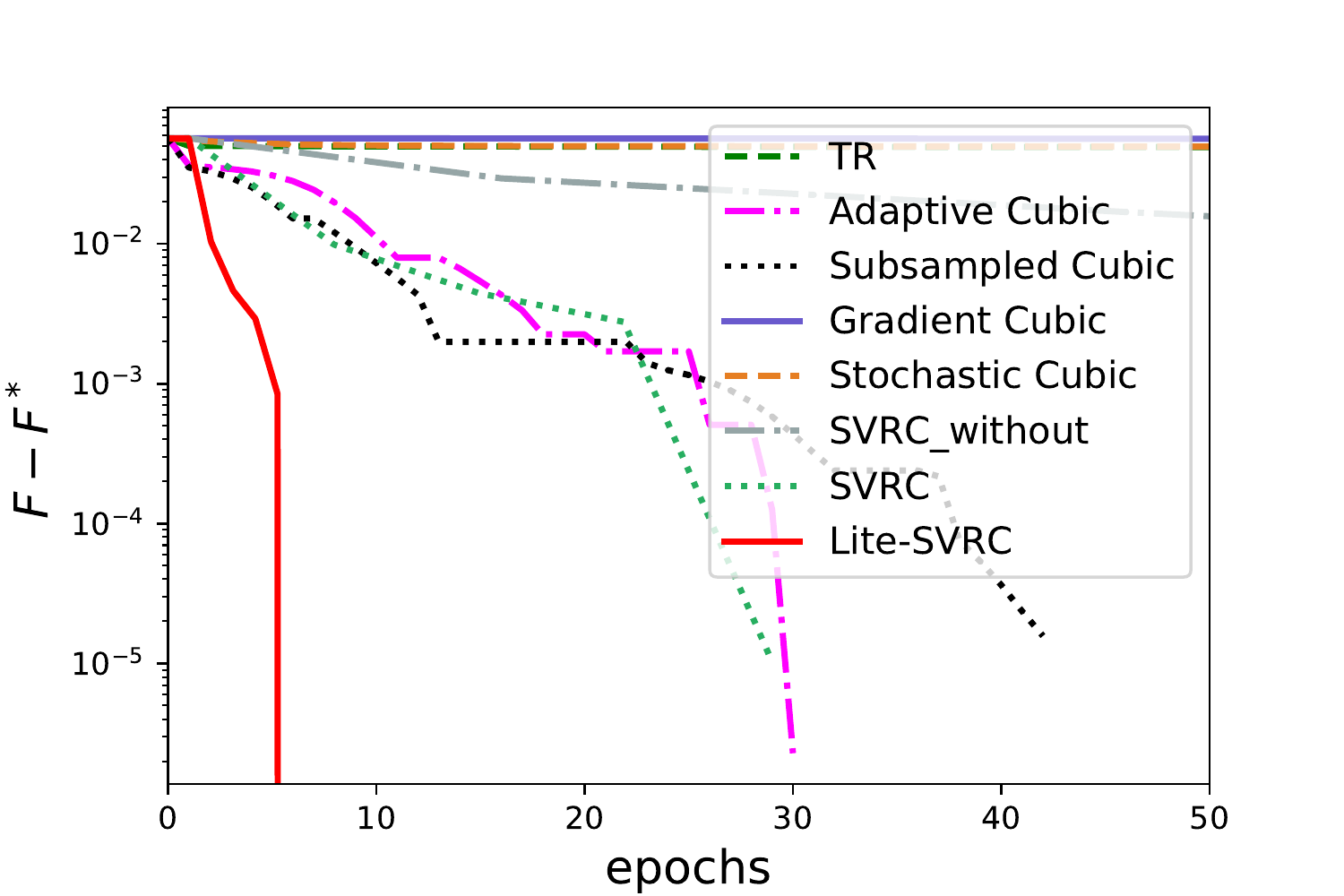}}
		
		\subfigure[\textit{a9a}]{\includegraphics[width=0.32\linewidth]{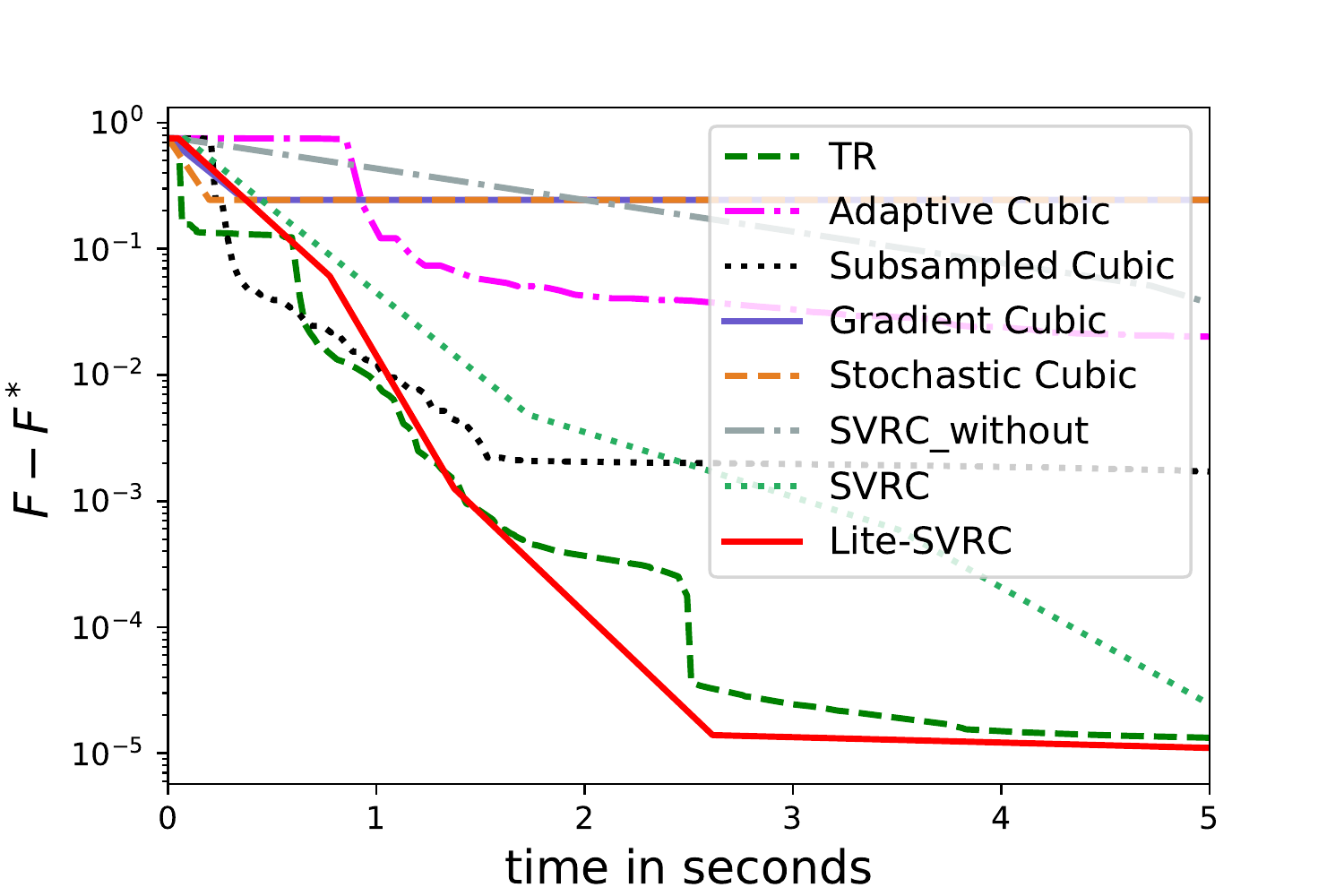}}	
		\subfigure[\textit{ijcnn1}]{\includegraphics[width=0.32\linewidth]{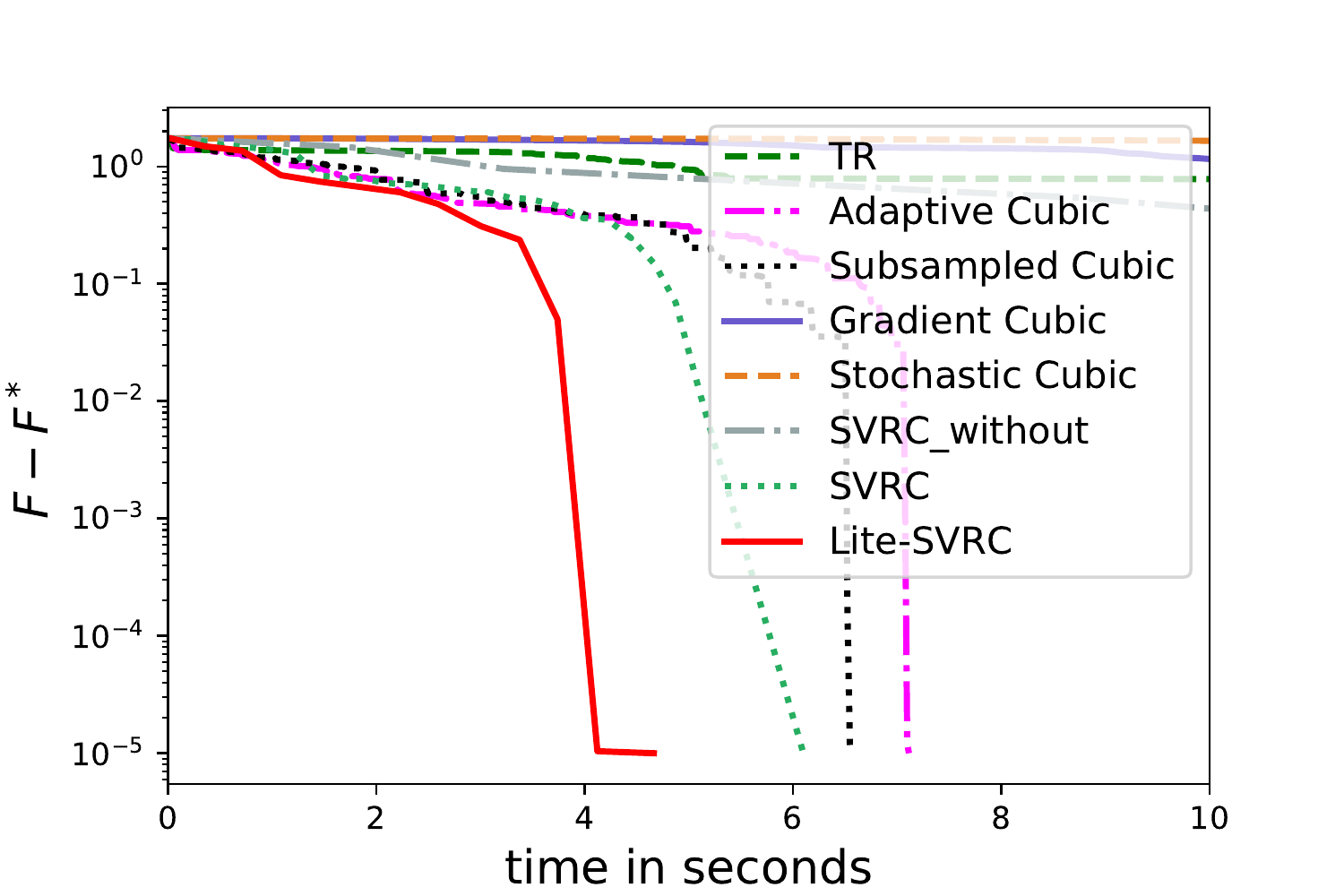}\label{fig:simu_logis_entropy}}	
		\subfigure[\textit{covtype}]{\includegraphics[width=0.32\linewidth]{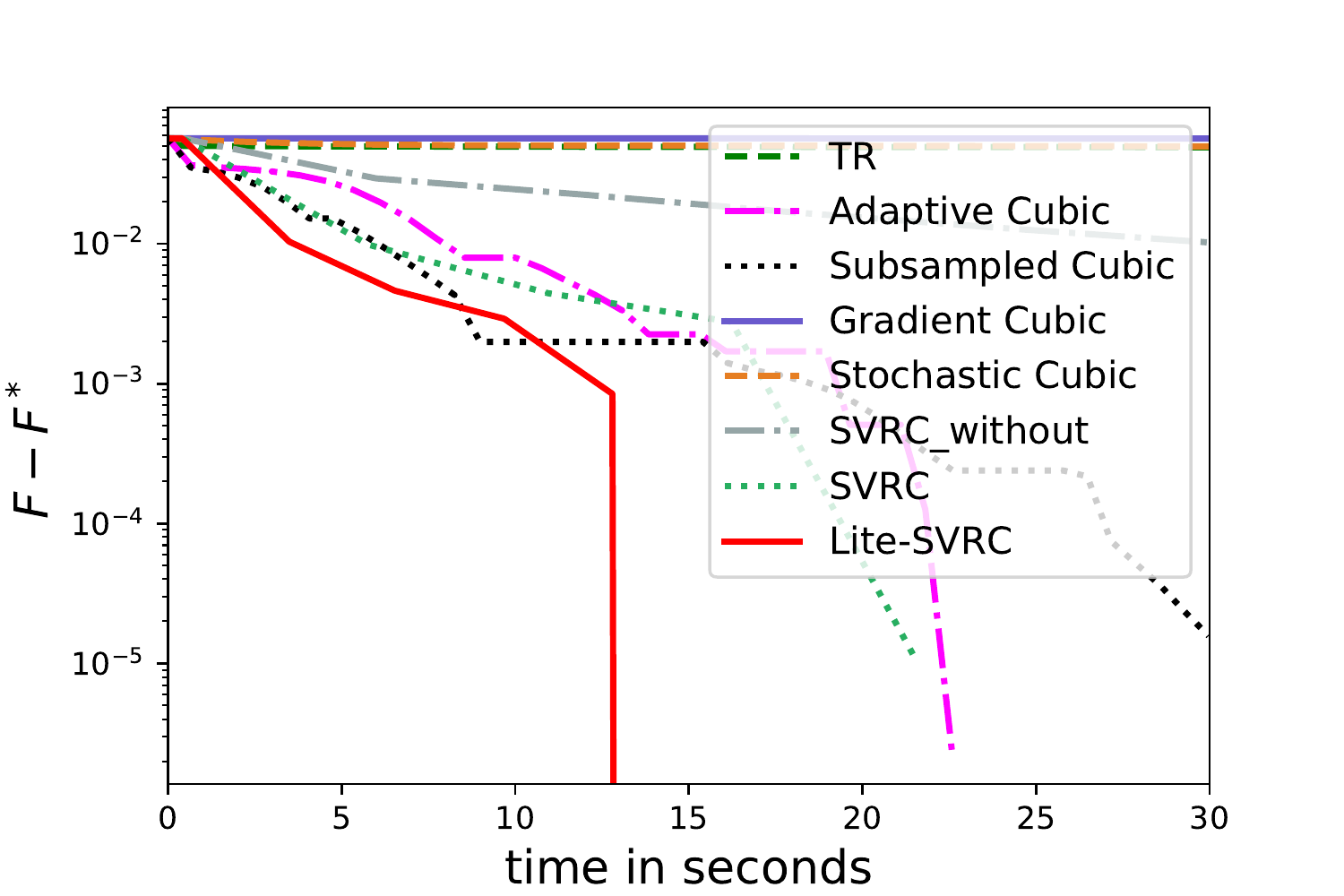}}
	\caption{Function value gap of different algorithms for nonlinear least square problems on different datasets. (a)-(c) are plotted w.r.t. Hessian sample complexity. (d)-(e) are plotted w.r.t. CPU runtime. \label{fig:nonlinear_ls}}
	\end{center}
	\vskip -0.2in
\end{figure*}

\subsection{Nonlinear Least Square with Nonconvex Regularizer}

In this subsection, we consider another problem, namely, the nonlinear least square problem with a nonconvex regularizer $g(\lambda, \alpha, \xb)$ defined in \eqref{eq:noncon_penalty}. The nonlinear least square problem is also studied in \cite{xu2017newton, zhou2018stochastic}. Given training data $\xb_i \in \RR_d$ and $y_i \in \{0,1\}$, $1\leq i \leq n$, our goal is to minimize the following problem
\begin{align}
    \min_{\sbb \in \RR^d}\frac{1}{n}\sum_{i=1}^n\big[y_i - \phi(\sbb^{\top}\xb_i)\big]^2+g(\lambda, \alpha, \sbb).
\end{align}
Here $\phi(x) = 1/(1+\exp(-x))$ is again the sigmoid function. The parameters $\lambda$ and $\alpha)$ in the nonconvex regularizer for different datasets are set as follows: we set $\lambda=5\times10^{-3}$ for all three datasets, and set $\alpha=10,20,50$ for \emph{a9a}, \emph{ijcnn1} and \emph{covtype} datasets respectively. The experiment results are summarized in Figure \ref{fig:nonlinear_ls}, where the first row shows the plots of function value gap v.s. Hessian sample complexity and the second row presents the plots of function value v.s. CPU runtime (in second). It can be seen that Lite-SVRC again achieves the best performance among all other algorithms regarding to both sample complexity of Hessian and runtime when the required precision is high, which supports our theoretical analysis again. SVRC performs the second best. 

\section{Conclusions}\label{sec:conclusion}
In this paper, we propose a new algorithm called Lite-SVRC, which achieves lower sample complexity of Hessian compared with existing variance reduction based cubic regularization algorithms \citep{zhou2018stochastic,wang2018sample}. 
Extensive experiments on various nonconvex optimization problems and datasets validate our theory.

\appendix
\section{Proof of the Main Theory}\label{sec:proof_sketch}
In this section, we provide the proofs of Theorem \ref{thm:1} and Corollary \ref{coro:result_come}. 
\subsection{Proof of Theorem \ref{thm:1}}
Since our algorithm consists of inner loops and outer loops, we mainly focus on the analysis of one single step which is the $t$-th iteration in the $s$-th epoch, 
where $s\in \{1, ..., S\}, t\in\{0, ..., T-1\}$. 

Similar to other CR related work \citep{Nesterov2006Cubic, Cartis2011Adaptive, kohler2017sub}, our ultimate goal is to prove the following statement of one single loop:
\begin{align}
    \mu(\xnew) \leq \sqrt{\Hlip}\big[F(\xold) - F(\xnew)\big].\label{sketch:fake}
\end{align}
If \eqref{sketch:fake} holds, then we just take summation over $t$ and $s$ in the above inequality, which yields the final result of Theorem \ref{thm:1}.
Unfortunately, \eqref{sketch:fake} does not hold in general because of the existence of randomness in our algorithm. Nevertheless, by borrowing the idea from the analysis of SVRG in nonconvex setting \citep{Reddi2016Stochastic}, we propose to replace the function $F$ in \eqref{sketch:fake} with the following Lyapunov function:
\begin{align}
    F(\xold)+\cold\|\xold - \rx\|_2^3, \label{def:lya}
\end{align}
where $\cold$ are parameters defined in Theorem \ref{thm:1}. With the Lyapunov function in \eqref{def:lya}, we are able to prove the following key lemma that resembles \eqref{sketch:fake} and holds in expectation:
\begin{lemma}\label{lemma_oneloop}
Under the same assumption as in Theorem \ref{thm:1}, let $\xnew, \xold$ be variables defined in Algorithm \ref{algorithm:1}. $\Gst, \cold$ are parameters defined in Theorem \ref{thm:1}, and $\constant_\mu$ is a constant. Then we have following result:
\begin{align}\label{lemma_oneloop_eq}
    \frac{\Gst}{\constant_\mu}\cdot \EE(\mu(\xnew)) 
    & \leq\EE\big[F(\xold)+\cold\cdot \|\xold - \rx\|_2^3\big] - \EE\big[F(\xnew)+\cnew\cdot \|\xnew - \rx\|_2^3\big], 
\end{align}
where $\EE$ takes over all randomness.
\end{lemma}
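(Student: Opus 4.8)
\textit{Proof proposal.} The plan is to carry out the classical Nesterov--Polyak cubic-regularization descent argument, but applied to the \emph{inexact} model $m_t^s$ built from $\vst,\Ust$ in place of the exact gradient and Hessian, and then to hide the inexactness inside the cubic correction $\cold\|\xold-\rx\|_2^3$ of the Lyapunov function. Write $\delta_g:=\|\vst-\dF(\xold)\|_2$ and $\delta_H:=\|\Ust-\HF(\xold)\|_2$. From the first- and second-order optimality conditions of $\hst=\argmin_{\hb}m_t^s(\hb)$, i.e.\ $\vst+\Ust\hst+\frac{\Mst}{2}\|\hst\|_2\hst=\mathbf 0$ and $\Ust+\frac{\Mst}{2}\|\hst\|_2 I\succeq 0$, together with the sub-optimality value bound $m_t^s(\hst)\le-\frac{\Mst}{12}\|\hst\|_2^3$, Assumption~\ref{assumption:hess_lip} (used both in the cubic Taylor estimate of $F(\xnew)$ and in comparing $\dF(\xnew),\HF(\xnew)$ with $\dF(\xold),\HF(\xold)$), and the hypothesis $\Mst>2\Hlip$, I would first derive two purely deterministic inequalities:
\begin{align}
F(\xnew)&\le F(\xold)-\frac{\Mst}{12}\|\hst\|_2^3+C\big(\Mst^{-1/2}\delta_g^{3/2}+\Mst^{-2}\delta_H^3\big),\notag\\
\mu(\xnew)&\le C\big(\Mst^{3/2}\|\hst\|_2^3+\delta_g^{3/2}+\Mst^{-3/2}\delta_H^3\big).\notag
\end{align}
Here $C$ is an absolute constant, and the $\delta$-terms arise from splitting the cross terms $\la\dF(\xold)-\vst,\hst\ra$, $\la(\HF(\xold)-\Ust)\hst,\hst\ra$ and $\delta_H\|\hst\|_2$ by weighted Young's inequalities whose free parameters are taken proportional to $\Mst$; this is exactly where $\Mst>2\Hlip$ is used, to make the residual $\|\hst\|_2^3$ coefficients negative.

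Second, I would bound $\delta_g$ and $\delta_H$ in conditional expectation given the history up to the start of iteration $(s,t)$, which makes $\xold$ --- hence $\bst=D_g/\|\xold-\rx\|_2^2$ and $\Bst=D_h$ --- deterministic while the fresh index sets $I_g,I_h$ are i.i.d.\ and independent. Since each summand $\df_{i_t}(\xold)-\df_{i_t}(\rx)$ has norm at most $\glip\|\xold-\rx\|_2$ by Assumption~\ref{assumption:grad_lip}, the variance of the sample mean gives $\EE\,\delta_g^2\le\glip^2\|\xold-\rx\|_2^2/\bst$, so by the choice of $\bst$ and Jensen's inequality $\EE\,\delta_g^{3/2}\le(\glip^2/D_g)^{3/4}\|\xold-\rx\|_2^3$ --- a bound of the \emph{cubic} form $(\mathrm{const})\cdot\|\xold-\rx\|_2^3$. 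For the Hessian, each centered summand has spectral norm at most $2\Hlip\|\xold-\rx\|_2$ by Assumption~\ref{assumption:hess_lip}, so a matrix-Bernstein / Schatten-norm moment inequality applied to $\sum_{j_t\in I_h}\big[(\Hf_{j_t}(\xold)-\Hf_{j_t}(\rx))-(\HF(\xold)-\HF(\rx))\big]$, followed by integrating the tail, yields $\EE\,\delta_H^3\le\constant_h(\log d/D_h)^{3/2}\Hlip^3\|\xold-\rx\|_2^3$, again cubic in $\|\xold-\rx\|_2$; the hypothesis $D_h>25\log d$ enters here to keep the sub-Gaussian regime dominant, so that the scaling is $\sqrt{\log d/D_h}$ rather than $\log d/D_h$. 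Substituting these into the two deterministic inequalities and taking total expectations turns every error term into a multiple of $\EE\|\xold-\rx\|_2^3$.

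Third, I would expand $\|\xnew-\rx\|_2^3\le(\|\xold-\rx\|_2+\|\hst\|_2)^3$ with a weighted Young's inequality tuned by the free parameters $\alphast,\betast$, producing a $\|\hst\|_2^3$ part with coefficient $(1+2\alphast+\betast)$ and a $\|\xold-\rx\|_2^3$ part with coefficient $(1+\alphast^{-2}+2\betast^{-1/2})$ --- precisely the combinations appearing in \eqref{def_gst}--\eqref{def_cst}. Assembling $\frac{\Gst}{\constant_\mu}\mu(\xnew)+F(\xnew)+\cnew\|\xnew-\rx\|_2^3$ from these three ingredients and collecting the coefficients of $\EE\|\hst\|_2^3$ and of $\EE\|\xold-\rx\|_2^3$: the identity $12\Mst^{3/2}\Gst=\Mst-12\cnew(1+2\alphast+\betast)$ from \eqref{def_gst}, together with a sufficiently large absolute constant $\constant_\mu$ (such a choice is consistent precisely because $\Mst>2\Hlip$ bounds $(\Hlip^{3/2}+\Mst^{3/2})/\Mst^{3/2}$), makes the net $\|\hst\|_2^3$ coefficient non-positive, while the recursion \eqref{def_cst} for $\cold$ is exactly the bookkeeping identity that makes the net $\|\xold-\rx\|_2^3$ coefficient at most $\cold$. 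Rearranging then gives \eqref{lemma_oneloop_eq}.

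I expect the Hessian-concentration step to be the main obstacle: one needs an in-expectation \emph{third}-moment bound on $\|\Ust-\HF(\xold)\|_2$ that is simultaneously (i) proportional to $\|\xold-\rx\|_2^3$, so it can be absorbed into the cubic Lyapunov correction; (ii) of the sharp order $(\log d/D_h)^{3/2}$; and (iii) valid for a \emph{constant} batch size $D_h$ independent of the iterate. Getting all three at once requires care in applying a matrix Bernstein-type inequality --- controlling both the matrix variance proxy and the almost-sure bound via Assumption~\ref{assumption:hess_lip} --- and in converting the resulting high-probability estimate into a moment bound by tail integration, where $D_h>25\log d$ is what forces the $\sqrt{\log d/D_h}$ rather than $\log d/D_h$ behaviour. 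A secondary, purely computational difficulty is tracking the several Young's-inequality parameters (and the powers of $\Mst$ they carry) so that the two collected coefficients reduce \emph{exactly} to \eqref{def_gst}--\eqref{def_cst}, and checking that the induction defined there is well posed, i.e.\ $\Gst>0$ and $\cold>0$ for every $s,t$ --- which holds once $\Mst$ is taken to be a sufficiently large multiple of $\Hlip$ and $T,D_g,D_h$ are chosen as in Corollary~\ref{coro:result_come}.
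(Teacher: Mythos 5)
Your proposal follows essentially the same route as the paper: it reconstructs the three deterministic ingredients (the $\mu(\xnew)$ bound, the descent estimate for $F(\xnew)$, and the Young-parametrized expansion of $\|\xnew-\rx\|_2^3$), the two variance-reduction moment bounds on $\|\ev\|_2^{3/2}$ and $\|\eU\|_2^3$ in terms of $\|\xold-\rx\|_2^3$, and then collects coefficients against the recursions \eqref{def_gst}--\eqref{def_cst} exactly as in the paper's Lemmas~\ref{lemma:mu_evaluate}--\ref{lemma:hess_reduce_1} and their combination. The only cosmetic difference is in the Hessian-concentration step, where you suggest a tail-bound-plus-integration route, whereas the paper applies a Chen--Gittens--Tropp style $q$-th-moment matrix inequality directly to $\EE\|\eU\|_2^3$; both yield the required $(\log d/D_h)^{3/2}$ scaling, so the argument is the same in substance.
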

With Lemma \ref{lemma_oneloop}, we are ready to deliver the proof of our main theory. 
\begin{proof}[Proof of Theorem \ref{thm:1}]
Applying Lemma \ref{lemma_oneloop}, we sum up \eqref{lemma_oneloop_eq} from $t = 0$ to $T-1$, while yields
\begin{align}\label{main0_0}
    \sum_{t=0}^{T-1} \frac{\Gst}{\constant_\mu}\cdot \EE(\mu(\xnew)) 
    &\leq \EE\big[F(\xb_0^s)+c_{s,0}\cdot \|\xb_0^s - \rx\|_2^3\big] - \EE\big[F(\xb_T^s)+c_{s,T}\cdot \|\xb_T^s - \rx\|_2^3\big].
\end{align}
Substituting $\xb_0^s = \rx, \xb_T^s = \hat{\xb}^s$ and $c_{s,T} = 0$ into \eqref{main0_0}, we get
\begin{align*}
    \sum_{t=0}^{T-1} \Gst/\constant_\mu\cdot \EE(\mu(\xnew)) \leq \EE F(\rx) - \EE F(\hat{\xb}^s).
\end{align*}
Then we take summation from $s = 1$ to $S$, we have
\begin{align*}
    \sum_{s=1}^S\sum_{t=0}^{T-1} \frac{\Gst}{\constant_\mu}\cdot \EE(\mu(\xnew)) &\leq \EE F(\hat{\xb}^0) - \EE F(\hat{\xb}^S)  \\
    &\leq F(\hat{\xb}^0) - \inf_{\xb\in \RR^d} F(\xb) \\
    &= \Delta_F.
\end{align*}
Because $\minG \leq \Gst/\constant_\mu$ for all $s=1...S, t=0...T-1$, we have
\begin{align}\label{main0_2}
    \minG\cdot\sum_{s=1}^S\sum_{t=0}^{T-1} \EE(\mu(\xnew)) \leq \Delta_F.
\end{align}
Finally, because we choose $\xout$ randomly over $s$ and $t$, thus we have our result from \eqref{main0_2}:
\begin{align*}
    \EE \mu(\xout) \leq \Delta_F /(ST\minG). 
\end{align*}
This competes the proof.
\end{proof}

\subsection{Proof of Corollary \ref{coro:result_come}}
In this section, we provide the  proof of our corollary for the sample complexity of $\SVRCp$. To prove Corollary \ref{coro:result_come}, we need following lemma:
\begin{lemma}\label{thm:2}
With the parameter choice in Corollary \ref{coro:result_come}, we further choose the parameters $\alphast, \betast$ in Theorem \ref{thm:1} as
\begin{align*}
    \alphast = n^{1/6}, \betast = n^{2/3}.
\end{align*}
From now, we can define $\Gst$ and $\cold$ as variables only dependent on $s, t, n, \glip$ and $\Hlip$. Then we have that $\Gst$ and $\cold$ are positive, and 
\begin{align}\label{estima_gamma}
    \constant_l\cdot\Hlip^{-1/2}<\Gst<\constant_u\cdot\Hlip^{-1/2},
\end{align}
where $\constant_l, \constant_u$ are two positive constants.
\end{lemma}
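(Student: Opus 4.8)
The plan is to verify the claimed two-sided bound on $\Gst$ by unrolling the coupled recursions \eqref{def_gst} and \eqref{def_cst} backwards from $t = T$, using the specific parameter choices $T = n^{1/3}$, $\Mst = \constant_m \Hlip$, $D_g = 4L_1^2/L_2^2 \cdot n^{4/3}$, $D_h = \log d \cdot (\constant_h n)^{2/3}$, $\alphast = n^{1/6}$, $\betast = n^{2/3}$. First I would plug these into \eqref{def_cst} to see that the ``driving term'' (the part not involving $\cnew$) becomes an absolute constant times $\Hlip^{3/2}/\Mst^{1/2} \cdot (\,[4L_1^2/(D_g L_2^2)]^{3/4} + \constant_h \Hlip^{3/2}(\log d)^{3/2}/(\Mst^{3/2} D_h^{3/2})\,)$; with the chosen $D_g, D_h$ both inner terms are $\Theta(n^{-1})$, so the driving term is $\Theta(\Hlip/n)$ after using $\Mst = \Theta(\Hlip)$. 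Meanwhile the multiplicative factor on $\cnew$ in \eqref{def_cst} is $1 + 1/\alphast^2 + 2/\betast^{1/2} = 1 + n^{-1/3} + 2 n^{-1/3} = 1 + 3 n^{-1/3}$. So the recursion has the shape $\cold = a/n + (1 + 3 n^{-1/3})\cnew$ with $a = \Theta(\Hlip)$ and terminal condition $c_{s,T} = 0$.

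Next I would solve this scalar linear recursion explicitly: $c_{s,T-k} = (a/n)\sum_{j=0}^{k-1}(1+3n^{-1/3})^j$. Since $k \le T = n^{1/3}$, the ratio raised to the power $k$ satisfies $(1+3n^{-1/3})^{T} \le e^{3}$, an absolute constant; hence the geometric sum is at most $k \cdot e^3 \le n^{1/3} e^3$, giving $c_{s,t} \le (a/n)\cdot e^3 n^{1/3} = \Theta(\Hlip) \cdot n^{-2/3}$ for all $s,t$. (Positivity of $\cold$ is immediate since every term in the unrolled sum is positive.) In particular $\cnew = O(\Hlip n^{-2/3}) = o(\Hlip)$. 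Then I would substitute this bound back into \eqref{def_gst}: the numerator $\Mst - 12\cnew(1 + 2\alphast + \betast)$ has correction term $12\cnew(1 + 2n^{1/6} + n^{2/3}) = O(\Hlip n^{-2/3}) \cdot O(n^{2/3}) = O(\Hlip)$ — here I need to be a bit careful that the constant in this $O(\cdot)$ is strictly smaller than $\constant_m$, which is where the freedom to take $\constant_m$ a large enough absolute constant (and $n > 10$, $D_h > 25\log d$) gets used. Granting that, the numerator is $\Theta(\Hlip)$ and bounded between, say, $\Mst/2$ and $\Mst$. Dividing by $12\Mst^{3/2} = \Theta(\Hlip^{3/2})$ yields $\Gst = \Theta(\Hlip^{-1/2})$, i.e. the two-sided bound \eqref{estima_gamma} with explicit absolute constants $\constant_l, \constant_u$. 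The statement that $\Gst, \cold$ depend only on $s, t, n, L_1, L_2$ is just the observation that, after the parameter substitution, nothing else appears in \eqref{def_gst}--\eqref{def_cst}.

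The main obstacle I anticipate is the circularity/feasibility issue lurking in the recursions: \eqref{def_gst} and \eqref{def_cst} are not a clean backward recursion in a single variable because $\Gst$ appears inside the formula for $\cold$, which in turn feeds $\Gst$ at the same index. So one must first argue that $\Gst$ is bounded (e.g. $\Gst \le \Mst/(12\Mst^{3/2}) = 1/(12\Mst^{1/2})$ trivially from \eqref{def_gst} once the numerator is $\le \Mst$, which holds as long as $\cnew \ge 0$), use that crude bound to control the $\Gst\Mst^{1/2}$ factor in \eqref{def_cst} by an absolute constant, and only then run the geometric-sum argument for $\cold$; finally feed the resulting $O(\Hlip n^{-2/3})$ bound on $\cnew$ back to get the \emph{lower} bound on $\Gst$. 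Making this bootstrap rigorous — and tracking that all the absolute constants ($\constant_1, \constant_h, \constant_m, e^3$, etc.) combine so that the numerator of \eqref{def_gst} stays bounded away from $0$ uniformly in $s,t$ and in $n > 10$ — is the delicate part; everything else is bookkeeping with powers of $n$.
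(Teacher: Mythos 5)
Your proof follows essentially the same route as the paper: unroll the backward recursion for $\ct := \cold$ from the terminal condition $c_{s,T}=0$, observe that the multiplier on $\ctnew$ is at most $1+3n^{-1/3}$ and the driving term is $\Theta(\Hlip/n)$, use $(1+3n^{-1/3})^{T} = O(1)$ with $T = n^{1/3}$ to conclude $\ct = O(\Hlip n^{-2/3})$, and substitute back into \eqref{def_gst}, where the factor $1+2n^{1/6}+n^{2/3} = O(n^{2/3})$ exactly cancels the $n^{-2/3}$ and yields $\Gst = \Theta(\Hlip^{-1/2})$. Two remarks. First, the circularity you worry about is a misreading of the indices: in \eqref{def_gst}, $\Gst$ depends on $\cnew = c_{s,t+1}$, not on $\cold = c_{s,t}$, so both $\Gst$ and $\cold$ are computed from $\cnew$ and the pair evolves by a genuine backward recursion with no fixed-point issue to resolve. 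The paper handles the entanglement by substituting the expression for $\Gst$ directly into \eqref{def_cst}, obtaining a single scalar recursion $\ct = \ctnew\big(1+\constant_a n^{-1/3}-\constant_b n^{-5/6}-\constant_c n^{-1}\big) + \constant_d\Hlip/n$ whose multiplier it checks is strictly between $1$ and $1+3n^{-1/3}$; your alternative of first invoking the crude bound $\Gst \le 1/(12\Mst^{1/2})$ (valid whenever $\cnew\ge 0$) to control the driving term is also correct, just slightly looser in constants. Second, you are right to flag that the lower bound $\constant_l>0$ only holds if $\constant_m$ is a sufficiently large absolute constant: the paper sets $\constant_l = (\constant_m/12 - 60\constant_d)/\constant_m^{3/2}$ with $\constant_d = (\constant_1+1/12)(\constant_m^{-2}+\constant_m^{-1/2})$ and $\constant_1 = 200$, so positivity requires $\constant_m$ on the order of several thousand, a constraint the paper asserts but does not spell out.
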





\begin{proof}[Proof of Corollary \ref{coro:result_come}] Since we already have $\Mst = \bigO(\Hlip)$ by the parameter choice in Lemma \ref{thm:2}, we only need to make sure that $\Delta_F / (ST\minG) \leq \epsilon^{3/2}$. Take $\minG > \constant_1\cdot \Hlip^{-1/2}$ and $T = n^{1/3}$, it is sufficient to let $S = [\constant_s\cdot(\Hlip^{1/2}\Delta_F)/(\epsilon^{3/2}n^{1/3})]+1$, where $\constant_s$ is a constant. Thus, as we need to sample $n$ Hessian at the beginning of each inner loop, and in each inner loop, we need to sample $\Bst = D_h = \tbigO(n^{2/3})$ Hessian, then the total sample complexity of Hessian for Algorithm \ref{algorithm:1} is $S\cdot n+ S\cdot T \cdot D_h = \tbigO(n+n^{2/3}\cdot (\Delta_F \sqrt{\Hlip})/\epsilon^{3/2})$.

\end{proof}

\section{Proof of the Key Lemmas}\label{proof_part1}
\subsection{Proof of Lemma \ref{lemma_oneloop}}
For simplification, we denote $\bg := D_g/4\cdot \Hlip^2/\glip^2$ and $\bH := D_H/\log d$. In this section, we prove the key lemma about the Lyapunov function \eqref{def:lya} used in the proof of our main theory. We define $\ev, \eU$ for the simplification of notation:
\begin{align}\label{def:eveu}
    \ev = \dF(\xold) - \vst,\quad \eU = \HF(\xold) - \Ust, 
\end{align}
where $\vb_0^s = \rg, \Ub_0^s = \rH$. Before we state the proof, we present some technical lemmas that are useful in our analysis. 

Firstly, we give a sharp bound of $\mu(\xnew)$. A very crucial observation is that we can bound the norm of gradient $\|\dF(\xnew)\|_2$ and the smallest eigenvalue of Hessian $\mineig(\HF(\xnew))$ with $\|\hst\|_2$, $\|\ev\|_2$ and $\|\eU\|_2$ defined in \eqref{def:eveu}. 
Formally, we have the following lemma:
\begin{lemma}\label{lemma:mu_evaluate}
Under the same assumption as in Theorem \ref{thm:1},
let $\hst, \xnew, \Mst$ be variables defined by Algorithm \ref{algorithm:1}. Then we have
\begin{align}\label{lemma:mu_evaluate_eq}
\mu(\xnew)/\constant_\mu &\leq 
 \Mst^{3/2}\|\hst\|_2^3  + \|\ev\|_2^{3/2} +   \Mst^{-3/2}\|\eU\|_2^3,
\end{align}
where $\constant_{\mu} = 18$.
\end{lemma}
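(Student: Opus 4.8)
The plan is to bound the two quantities appearing in the definition of $\mu(\xnew)$, namely $\|\dF(\xnew)\|_2^{3/2}$ and $-(\mineig(\HF(\xnew)))^3 \Mst^{-3/2}$, separately, and then combine them. The key tool is the first-order and second-order optimality conditions for the exact cubic subproblem that defines $\hst$ in line 13 of Algorithm \ref{algorithm:1}. Specifically, the minimizer $\hst$ of $m_t^s(\hb) = \la \vst, \hb\ra + \frac12\la \Ust\hb,\hb\ra + \frac{\Mst}{6}\|\hb\|_2^3$ satisfies the stationarity condition $\vst + \Ust\hst + \frac{\Mst}{2}\|\hst\|_2\,\hst = \mathbf{0}$, and the second-order condition $\Ust + \frac{\Mst}{2}\|\hst\|_2\,\Ib \succeq \mathbf{0}$. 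These are the standard Nesterov--Polyak relations for cubic regularization; I would invoke them directly.

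First, for the gradient term: using Hessian-Lipschitzness (Assumption \ref{assumption:hess_lip}, which gives the Taylor remainder bound $\|\dF(\xnew) - \dF(\xold) - \HF(\xold)\hst\|_2 \le \frac{\Hlip}{2}\|\hst\|_2^2$), I would write
\begin{align*}
\dF(\xnew) &= \big[\dF(\xnew) - \dF(\xold) - \HF(\xold)\hst\big] + \big[\dF(\xold) - \vst\big] + \big[\HF(\xold) - \Ust\big]\hst + \big[\vst + \Ust\hst\big],
\end{align*}
recognize the last bracket as $-\frac{\Mst}{2}\|\hst\|_2\hst$ via stationarity, and apply the triangle inequality together with the definitions $\ev, \eU$ in \eqref{def:eveu} to get $\|\dF(\xnew)\|_2 \le \frac{\Hlip}{2}\|\hst\|_2^2 + \|\ev\|_2 + \|\eU\|_2\|\hst\|_2 + \frac{\Mst}{2}\|\hst\|_2^2$. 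Raising to the $3/2$ power and using $(a+b+c+d)^{3/2} \lesssim a^{3/2}+b^{3/2}+c^{3/2}+d^{3/2}$ (with an absolute constant), then absorbing $\|\eU\|_2\|\hst\|_2$ via Young's inequality into a term of the form $\Mst^{-3/2}\|\eU\|_2^3 + \Mst^{3/2}\|\hst\|_2^3$ handles this half and already produces all three terms on the right-hand side of \eqref{lemma:mu_evaluate_eq} (using $\Mst > 2\Hlip$ to fold the $\Hlip$ term into the $\Mst$ term).

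Second, for the eigenvalue term: from the second-order condition $\Ust \succeq -\frac{\Mst}{2}\|\hst\|_2\Ib$ and Hessian-Lipschitzness $\HF(\xnew) \succeq \HF(\xold) - \Hlip\|\hst\|_2\Ib$, combined with $\HF(\xold) = \Ust + \eU \succeq \Ust - \|\eU\|_2\Ib$, I get $\HF(\xnew) \succeq -\big(\frac{\Mst}{2} + \Hlip\big)\|\hst\|_2\Ib - \|\eU\|_2\Ib$, hence $-\mineig(\HF(\xnew)) \le (\frac{\Mst}{2}+\Hlip)\|\hst\|_2 + \|\eU\|_2$. Cubing and multiplying by $\Mst^{-3/2}$ gives, up to an absolute constant, $\Mst^{-3/2}\cdot\big(\Mst^3\|\hst\|_2^3 + \|\eU\|_2^3\big) \lesssim \Mst^{3/2}\|\hst\|_2^3 + \Mst^{-3/2}\|\eU\|_2^3$, again using $\Mst > 2\Hlip$. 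Taking the maximum of the two bounds and collecting the absolute constant into $\constant_\mu = 18$ finishes the proof.

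I expect the only real subtlety to be bookkeeping the absolute constants so that $\constant_\mu = 18$ comes out exactly, and being careful that the cross term $\|\eU\|_2\|\hst\|_2$ from the gradient estimate is split by Young's inequality in a way that does not reintroduce a $\|\ev\|_2$-free but $\Mst$-dependent coefficient larger than $1$ on $\Mst^{3/2}\|\hst\|_2^3$; everything else is a routine application of the cubic-subproblem optimality conditions and the Lipschitz assumptions.
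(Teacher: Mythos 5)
Your proposal is correct and follows essentially the same route as the paper: the paper packages the two bounds you derive from scratch (the gradient decomposition via stationarity and the Taylor remainder, and the eigenvalue bound via the second-order condition) as Lemmas \ref{lemma:grad_evaluate} and \ref{lemma:hess_evaluate}, then raises to the $3/2$ and $3$ powers respectively and applies the same power-mean and Young-type inequalities to reach $\constant_\mu = 18$. The only superficial difference is that the paper applies Young's inequality to the cross term $\|\eU\|_2\|\hst\|_2$ inside Lemma \ref{lemma:grad_evaluate} before raising to the $3/2$ power, whereas you defer it until after; both orderings close with the same absolute constant.
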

Lemma \ref{lemma:mu_evaluate} suggests that to bound our target $\EE\mu(\xnew)$, we only need to focus on $\EE\|\hst\|_2^3, \EE\|\ev\|_2^{3/2}$ and $\EE\|\eU\|_2^3$.

Secondly, we bound $\EE\big[F(\xold)+\cold\|\xold - \rx\|_2^3\big] - \EE\big[F(\xnew)+\cnew\|\xnew - \rx\|_2^3\big]$. We first notice that $F(\xold) - F(\xnew)$ can be bounded with  $\ev, \eU$ and $\hst$. Such bound can be derived straightly from Hessian Lipschitz condition:

\begin{lemma}\label{lemma:fxnew_eva}
Under the same assumption as in Theorem \ref{thm:1},
let $\hst, \xold, \xnew, \Mst$ be variables defined by Algorithm \ref{algorithm:1}. Then we have the following result:
\begin{align}\label{lemma:fxnew_eva_eq}
    F(\xnew) &\leq F(\xold) - \Mst/12 \cdot \|\hst\|_2^3+ \constant_1\big(\|\ev\|_2^{3/2} /\Mst^{1/2} + \|\eU\|_2^3/\Mst^2\big), 
\end{align}
where $\constant_1 = 200$. 
\end{lemma}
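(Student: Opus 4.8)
\textbf{Proof proposal for Lemma \ref{lemma:fxnew_eva}.}

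The plan is to derive the descent inequality directly from the Hessian-Lipschitz condition (Assumption \ref{assumption:hess_lip}) together with the optimality conditions for the cubic subproblem $m_t^s(\hb)$ defined in line 13 of Algorithm \ref{algorithm:1}. First I would invoke the standard consequence of Hessian-Lipschitzness, namely the cubic Taylor bound
\begin{align*}
F(\xnew) \le F(\xold) + \la \dF(\xold),\hst\ra + \tfrac12\la\HF(\xold)\hst,\hst\ra + \tfrac{\Hlip}{6}\|\hst\|_2^3,
\end{align*}
which holds for any step $\hst$. The idea is then to replace $\dF(\xold)$ by $\vst$ and $\HF(\xold)$ by $\Ust$, paying the price of the error terms $\ev = \dF(\xold)-\vst$ and $\eU = \HF(\xold)-\Ust$ via Cauchy--Schwarz and the operator-norm bound: $\la\dF(\xold),\hst\ra \le \la\vst,\hst\ra + \|\ev\|_2\|\hst\|_2$ and similarly $\tfrac12\la\HF(\xold)\hst,\hst\ra \le \tfrac12\la\Ust\hst,\hst\ra + \tfrac12\|\eU\|_2\|\hst\|_2^2$. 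After this substitution the right-hand side contains $\la\vst,\hst\ra + \tfrac12\la\Ust\hst,\hst\ra + \tfrac{\Hlip}{6}\|\hst\|_2^3$, which I would rewrite in terms of $m_t^s(\hst)$: since $\Mst > 2\Hlip$, we have $\tfrac{\Hlip}{6}\|\hst\|_2^3 \le \tfrac{\Mst}{6}\|\hst\|_2^3$, so this quantity is at most $m_t^s(\hst)$.

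Next I would use the two optimality facts for the minimizer $\hst$ of $m_t^s$: the stationarity condition $\vst + \Ust\hst + \tfrac{\Mst}{2}\|\hst\|_2\hst = 0$, and the curvature condition $\Ust + \tfrac{\Mst}{2}\|\hst\|_2 \Ib \succeq 0$. From these, as in \cite{Nesterov2006Cubic}, one gets $m_t^s(\hst) = -\tfrac12\la\Ust\hst,\hst\ra - \tfrac{\Mst}{3}\|\hst\|_2^3 \le -\tfrac{\Mst}{12}\|\hst\|_2^3$ (the standard computation: contract stationarity with $\hst$ to express $\la\vst,\hst\ra$, substitute, and use the curvature condition to control $\la\Ust\hst,\hst\ra \ge -\tfrac{\Mst}{2}\|\hst\|_2^3$). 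This produces the main negative term $-\tfrac{\Mst}{12}\|\hst\|_2^3$. What remains is to absorb the residual error contributions $\|\ev\|_2\|\hst\|_2 + \tfrac12\|\eU\|_2\|\hst\|_2^2$ into the negative cubic term plus the claimed error terms, using Young's inequality in the form $ab \le \tfrac{\delta}{3}a^3\cdot(\text{const}) + \tfrac{c}{\delta^{1/2}}b^{3/2}$ type splittings: specifically $\|\ev\|_2\|\hst\|_2 \lesssim \|\hst\|_2^3 \cdot \Mst + \|\ev\|_2^{3/2}/\Mst^{1/2}$ and $\|\eU\|_2\|\hst\|_2^2 \lesssim \|\hst\|_2^3\cdot\Mst + \|\eU\|_2^3/\Mst^2$, with the $\|\hst\|_2^3$ pieces chosen small enough (by adjusting the Young's-inequality weights) to be swallowed by a constant fraction of $\tfrac{\Mst}{12}\|\hst\|_2^3$, leaving $-\tfrac{\Mst}{12}\|\hst\|_2^3$ with a possibly slightly worse constant that is reabsorbed into the stated $\constant_1 = 200$.

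I expect the main obstacle to be bookkeeping the constants so that the final coefficient on $\|\hst\|_2^3$ is exactly $-\Mst/12$ (as stated) rather than some smaller fraction — this forces the Young's inequality splits and the slack from $\Mst > 2\Hlip$ to be allocated carefully, and is the reason the explicit constant $\constant_1$ is taken as large as $200$. A secondary subtlety is that this lemma as stated is deterministic (no expectation), so all the error-term manipulations must be pointwise; the expectation only enters later when $\EE\|\ev\|_2^{3/2}$ and $\EE\|\eU\|_2^3$ are bounded via the minibatch sizes $\bst, \Bst$. No concentration or probabilistic argument is needed here — it is purely the cubic-subproblem algebra of \cite{Nesterov2006Cubic} adapted to inexact gradient and Hessian.
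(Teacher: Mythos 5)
Your overall plan — cubic Taylor bound, substitute in $\vst,\Ust$ with error terms, invoke the optimality facts to get $-\tfrac{\Mst}{12}\|\hst\|_2^3$, then Young's inequality — is the same framework the paper uses, and the Cauchy--Schwarz step to pass from inner products to $\|\ev\|_2\|\hst\|_2$ and $\tfrac12\|\eU\|_2\|\hst\|_2^2$ is an equivalent move. However, there is a genuine gap at the step where you write \emph{``since $\Mst>2\Hlip$ we have $\tfrac{\Hlip}{6}\|\hst\|_2^3 \le \tfrac{\Mst}{6}\|\hst\|_2^3$, so this quantity is at most $m_t^s(\hst)$.''} By replacing $\tfrac{\Hlip}{6}$ with $\tfrac{\Mst}{6}$ as an \emph{upper bound}, you throw away the negative slack $\tfrac{\Hlip-\Mst}{6}\|\hst\|_2^3$, which under $\Mst>2\Hlip$ is at most $-\tfrac{\Mst}{12}\|\hst\|_2^3$. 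That slack is exactly what is needed to cancel the positive $\|\hst\|_2^3$ residuals produced by Young's inequality. After discarding it, you are left with only one copy of $-\tfrac{\Mst}{12}\|\hst\|_2^3$ from the subproblem bound $m_t^s(\hst)\le -\tfrac{\Mst}{12}\|\hst\|_2^3$, and any Young split of $\|\ev\|_2\|\hst\|_2$ and $\|\eU\|_2\|\hst\|_2^2$ necessarily adds a strictly positive $\alpha\|\hst\|_2^3$ (it can be made small but never zero without sending the error-term coefficients to infinity). So your route yields $-(\tfrac{\Mst}{12}-\alpha)\|\hst\|_2^3$ for some $\alpha>0$, not $-\tfrac{\Mst}{12}\|\hst\|_2^3$, and you cannot repair this by enlarging $\constant_1$: the $\|\hst\|_2^3$ coefficient is a separate term from the $\constant_1(\|\ev\|_2^{3/2}/\Mst^{1/2}+\|\eU\|_2^3/\Mst^2)$ terms. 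You anticipated this (``main obstacle...bookkeeping the constants'') but the proposed resolution does not close it.

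The fix is the paper's decomposition: instead of bounding $\tfrac{\Hlip}{6}\|\hst\|_2^3\le\tfrac{\Mst}{6}\|\hst\|_2^3$, write the identity $\tfrac{\Hlip}{6}\|\hst\|_2^3=\tfrac{\Mst}{6}\|\hst\|_2^3+\tfrac{\Hlip-\Mst}{6}\|\hst\|_2^3$ and keep the second piece. Now you have two independent sources of $-\tfrac{\Mst}{12}\|\hst\|_2^3$: one from $m_t^s(\hst)\le -\tfrac{\Mst}{12}\|\hst\|_2^3$, and one from $\tfrac{\Hlip-\Mst}{6}\|\hst\|_2^3\le -\tfrac{\Mst}{12}\|\hst\|_2^3$. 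Choose the Young parameters (the paper uses $\constant_4=\constant_5=(18/\Mst)^{1/3}$) so that the positive $\|\hst\|_2^3$ contributions from $\la\ev,\hst\ra$ and $\tfrac12\la\eU\hst,\hst\ra$ sum to exactly $\tfrac{\Mst}{12}\|\hst\|_2^3$, cancel them against the second copy, and you are left with a clean $-\tfrac{\Mst}{12}\|\hst\|_2^3$ plus the error terms, whose coefficients ($\sqrt{18}$ and $162$) are then bounded by $\constant_1=200$.
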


We also give following result to show how to bound $\|\xnew - \rx\|_2^3$ with $\|\xold - \rx\|_2^3$:
\begin{lemma}\label{lemma:xnew_eva}
Under the same assumption as in Theorem \ref{thm:1},
let $\hst, \xold, \xnew, \Mst$ be variables defined by Algorithm \ref{algorithm:1}, $\alphast, \betast$ are parameters defined in Theorem \ref{thm:1}, then we have the following result:
\begin{align}\label{lemma:xnew_eva_eq}
    &\|\xnew - \rx\|_2^3 \leq (1+2\alphast+\betast)\|\hst\|_2^3+(1+1/\alphast^2+2/\betast^{1/2})\|\xold - \rx\|_2^3.
\end{align}
\end{lemma}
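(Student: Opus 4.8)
The plan is to reduce this inequality to an elementary scalar estimate and then apply Young's inequality twice. First I would invoke the update rule $\xnew = \xold + \hst$ from Algorithm \ref{algorithm:1} together with the triangle inequality to get $\|\xnew - \rx\|_2 \le \|\xold - \rx\|_2 + \|\hst\|_2$. Cubing both sides and abbreviating $a := \|\xold - \rx\|_2 \ge 0$ and $b := \|\hst\|_2 \ge 0$, it suffices to prove the purely scalar bound
\begin{align*}
    (a+b)^3 \le (1 + 2\alphast + \betast)\, b^3 + \big(1 + 1/\alphast^2 + 2/\betast^{1/2}\big)\, a^3 .
\end{align*}

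Expanding $(a+b)^3 = a^3 + 3a^2 b + 3ab^2 + b^3$, the two cross terms are the only quantities to control, and each is handled by a weighted Young's inequality. For $3ab^2$ I would apply Young with conjugate exponents $3$ (on the factor $a$) and $3/2$ (on the factor $b^2$); choosing the weight so that the resulting coefficient of $b^3$ equals $2\alphast$ yields $3ab^2 \le (1/\alphast^2)\,a^3 + 2\alphast\, b^3$. For $3a^2 b$ I would apply Young with conjugate exponents $3/2$ (on $a^2$) and $3$ (on $b$); choosing the weight so that the coefficient of $b^3$ equals $\betast$ yields $3a^2 b \le (2/\betast^{1/2})\,a^3 + \betast\, b^3$. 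Adding these two estimates to the leftover $a^3 + b^3$ produces exactly the stated coefficients, which proves the lemma after unwinding the definitions of $a$ and $b$.

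The proof is essentially bookkeeping; the only point requiring a little care is selecting the two Young weights so that the coefficients of $b^3$ come out to be precisely $2\alphast$ and $\betast$ (so that the coefficients of $a^3$ are forced to be $1/\alphast^2$ and $2/\betast^{1/2}$), matching the form needed downstream where $\alphast,\betast>0$ are later optimized in Theorem \ref{thm:1} and Lemma \ref{thm:2}. Notably, neither the Lipschitz Assumptions \ref{assumption:grad_lip}–\ref{assumption:hess_lip} nor any property of the cubic sub-problem is used: the bound is a deterministic consequence of the update rule alone.
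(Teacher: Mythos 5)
Your proof is correct and takes exactly the same route as the paper: the triangle inequality reduces the claim to the scalar estimate on $(a+b)^3$ with $a=\|\xold-\rx\|_2$ and $b=\|\hst\|_2$, the cube is expanded, and the two cross terms $3a^2b$ and $3ab^2$ are controlled by weighted Young's inequalities with exponent pair $\{3/2,3\}$, the weights chosen so that the $b^3$-coefficients come out to $\betast$ and $2\alphast$ (forcing $2/\betast^{1/2}$ and $1/\alphast^2$ on $a^3$). The paper's proof writes the Young bounds for $a^2b$ and $ab^2$ before multiplying by $3$, but the steps and the resulting coefficients are identical.
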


Based on Lemmas \ref{lemma:mu_evaluate},    \ref{lemma:fxnew_eva} and  \ref{lemma:xnew_eva}, we have established the connection between $\mu(\xnew), F(\xold)+\cold\cdot\|\xold - \rx\|_2^3$ and $F(\xnew)+ \cnew \cdot \|\xnew - \rx\|_2^3$ with only $\|\ev\|_2, \|\eU\|_2$ and $\|\hst\|_2$.

Finally, we bound $\EE\|\ev\|_2^{3/2}$ and $\EE\|\eU\|_2^3$ with consequent vector and matrix concentration inequalities. Previous analysis of variance-reduced method in nonconvex setting for first-order method which only focus on the upper bound of variance of $\ev$ gives $\EE\|\ev\|_2^2$ an upper bound only associated with $\|\xold - \rx\|_2$, which guarantees the variance reduction \citep{Reddi2016Stochastic, allen2016variance}. In our proof, we also need to bound the variance for stochastic Hessian $\EE\|\eU\|_2^3$. Thus we have following two lemmas:
\begin{lemma}\label{lemma:grad_reduce_1}
Under the same assumption as in Theorem \ref{thm:1},
let $\xold, \vst$ and $\rx$ be the iterates defined in Algorithm \ref{algorithm:1}, $\bg$ is the parameter of batch size defined in Theorem \ref{thm:1}. Then we have
\begin{align*}
     \EE_{\vst} \|\ev\|_2^{{3/2}} \leq  \frac{\Hlip^{{3/2}}}{b^{{3/4}}}\|\xold - \rx\|_2^3,
\end{align*}
where $\EE_{\vst}$ only takes expectation over $\vst$.
\end{lemma}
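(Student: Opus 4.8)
The statement to prove is Lemma~\ref{lemma:grad_reduce_1}, which bounds $\EE_{\vst}\|\ev\|_2^{3/2}$ by $\Hlip^{3/2}/b^{3/4}\cdot\|\xold-\rx\|_2^3$, where (recall from the notation setup) $b = D_g/4\cdot\Hlip^2/\glip^2$, so that $\bst = D_g/\|\xold-\rx\|_2^2$ satisfies $\bst = 4b\glip^2/\Hlip^2\cdot\|\xold-\rx\|_{2}^{-2}$.

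\medskip

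\noindent\textbf{Plan.}
The plan is to first control the second moment $\EE_{\vst}\|\ev\|_2^2$ and then pass to the $3/2$ power via Jensen's inequality (since $x\mapsto x^{3/4}$ is concave, $\EE\|\ev\|_2^{3/2} = \EE(\|\ev\|_2^2)^{3/4}\leq (\EE\|\ev\|_2^2)^{3/4}$). So the core is a standard SVRG-type variance bound for the semi-stochastic gradient $\vst = \frac{1}{\bst}\sum_{i_t\in I_g}[\df_{i_t}(\xold) - \df_{i_t}(\rx)] + \rg$. First I would note that $\vst$ is an unbiased estimator of $\dF(\xold)$, since $\EE_{i_t}[\df_{i_t}(\xold) - \df_{i_t}(\rx)] = \dF(\xold) - \dF(\rx) = \dF(\xold) - \rg$. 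Because $I_g$ is sampled with replacement, the summands are i.i.d., so
\[
\EE_{\vst}\|\ev\|_2^2 = \EE\|\dF(\xold) - \vst\|_2^2 = \frac{1}{\bst}\,\EE_{i}\big\|\df_{i}(\xold) - \df_{i}(\rx) - (\dF(\xold) - \dF(\rx))\big\|_2^2 \le \frac{1}{\bst}\,\EE_{i}\big\|\df_{i}(\xold) - \df_{i}(\rx)\big\|_2^2,
\]
using $\EE\|X - \EE X\|^2 \le \EE\|X\|^2$. By the gradient-Lipschitz Assumption~\ref{assumption:grad_lip}, each term is at most $\glip^2\|\xold - \rx\|_2^2$, giving $\EE_{\vst}\|\ev\|_2^2 \le \glip^2\|\xold - \rx\|_2^2/\bst$.

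\medskip

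\noindent\textbf{Finishing.}
Now substitute the batch size $\bst = D_g/\|\xold-\rx\|_2^2$. This gives $\EE_{\vst}\|\ev\|_2^2 \le \glip^2\|\xold-\rx\|_2^4/D_g$. Taking the $3/4$ power and applying Jensen:
\[
\EE_{\vst}\|\ev\|_2^{3/2} \le \Big(\frac{\glip^2\|\xold-\rx\|_2^4}{D_g}\Big)^{3/4} = \frac{\glip^{3/2}}{D_g^{3/4}}\|\xold-\rx\|_2^3.
\]
It remains to rewrite this in terms of $b$. Since $D_g = 4b\glip^2/\Hlip^2$, we get $\glip^{3/2}/D_g^{3/4} = \glip^{3/2}/(4b\glip^2/\Hlip^2)^{3/4} = \Hlip^{3/2}/(4^{3/4} b^{3/4}\glip^{0}) \cdot \glip^{3/2 - 3/2}$; carefully, $\glip^{3/2}\cdot(\Hlip^2)^{3/4}/(4^{3/4}b^{3/4}(\glip^2)^{3/4}) = \Hlip^{3/2}/(4^{3/4}b^{3/4}) \le \Hlip^{3/2}/b^{3/4}$, which is exactly the claimed bound (the constant $4^{3/4}>1$ only helps). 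This yields $\EE_{\vst}\|\ev\|_2^{3/2}\le \Hlip^{3/2}/b^{3/4}\cdot\|\xold-\rx\|_2^3$.

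\medskip

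\noindent\textbf{Main obstacle.}
There is no deep obstacle here; this is a routine variance-reduction estimate. The only points requiring care are: (i) correctly using the sampling-with-replacement structure so that the variance of the average is exactly $1/\bst$ times the per-sample variance (no covariance cross-terms, unlike sampling without replacement); (ii) bookkeeping the relationship between the algorithmic batch-size constant $D_g$ and the shorthand $b$ introduced at the start of the proof of Lemma~\ref{lemma_oneloop}, so that the final constant matches; and (iii) the application of Jensen's inequality in the right direction for the concave power $3/4$. An alternative, if one wants to avoid Jensen, is to bound $\|\ev\|_2^{3/2}$ directly via a moment inequality, but going through $\EE\|\ev\|_2^2$ is cleanest.
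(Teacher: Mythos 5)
Your proof is correct and follows essentially the same route as the paper's: both pass from the $3/2$ moment to the second moment via Jensen/Lyapunov, exploit the i.i.d. with-replacement structure to pull out the $1/\bst$ factor, bound the per-sample deviation using the gradient-Lipschitz assumption, and then substitute $\bst = D_g/\|\xold-\rx\|_2^2$. The only cosmetic difference is that you drop the centering term via $\EE\|X-\EE X\|^2\le \EE\|X\|^2$ (giving a slack factor of $4^{3/4}$), whereas the paper's proof bounds $\|\ab_i\|_2\le 2\glip\|\xold-\rx\|_2$ by triangle inequality and lands exactly on the stated constant.
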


\begin{lemma}\label{lemma:hess_reduce_1}
Under the same assumption as in Theorem \ref{thm:1},
let $\xold, \Ust$ and $\rx$ be iterates defined in Algorithm \ref{algorithm:1}, $\bH$ is the batch size defined in Theorem \ref{thm:1}, $\bH>25$. Then we have
\begin{align*}
     \EE_{\Ust} \|\eU\|_2^{{3/2}} \leq  \constant_h \cdot \frac{\Hlip^3}{\bH^{3/2}}\|\xold - \rx\|_2^3,
\end{align*}
where $\EE_{\Ust}$ only takes expectation over $\Ust$, $\constant_h = 15000$.
\end{lemma}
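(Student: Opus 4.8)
The plan is to realize $\eU$ as a centered average of i.i.d.\ bounded symmetric matrices, apply a matrix concentration inequality, and then integrate the resulting tail into a moment bound. Throughout, $\xold$ and $\rx$ (hence $R := 2\Hlip\|\xold-\rx\|_2$) are treated as fixed, since $\EE_{\Ust}$ is taken only over the fresh sampling of $I_h$; because $I_h$ is drawn uniformly with replacement, its indices $j_t$ are i.i.d.\ uniform on $[n]$, and $|I_h| = \Bst = D_H = \bH\log d$.

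First I would rewrite the error. Using $\rH = \HF(\rx) = \frac1n\sum_i\Hf_i(\rx)$ and the definition of $\Ust$, one obtains $\eU = \HF(\xold)-\Ust = -\frac{1}{\Bst}\sum_{j_t\in I_h} Z_{j_t}$, where $Z_j := \big[\Hf_j(\xold)-\Hf_j(\rx)\big]-\big[\HF(\xold)-\HF(\rx)\big]$ is symmetric, has mean zero over uniform $j$, and — by Assumption \ref{assumption:hess_lip} applied to each $f_j$ and to $F$ (whose Hessian averages $\Hlip$-Hessian-Lipschitz functions), together with the triangle inequality — satisfies $\|Z_j\|_2 \le R$ almost surely; hence $\|\EE Z_j^2\|_2 \le R^2$, which serves as the variance proxy. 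I would then apply the matrix Bernstein inequality (or, equivalently, a Schatten-norm matrix moment inequality) to $\frac{1}{\Bst}\sum_{j_t} Z_{j_t}$, obtaining $\Pr[\|\eU\|_2 \ge u] \le 2d\exp\big(-\Bst u^2/(2R^2 + 2Ru/3)\big)$ for all $u\ge 0$. The crucial observation is that $\|\eU\|_2 \le R$ holds deterministically (each $\|Z_j\|_2\le R$), so the heavy-tailed regime of Bernstein is never reached: only $u\le R$ matters, and there the bound simplifies to the sub-Gaussian tail $\Pr[\|\eU\|_2\ge u]\le 2d\exp(-\Bst u^2/(4R^2))$.

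Finally I would convert the tail into the moment on the left-hand side of the lemma via $\EE\|\eU\|_2^p = \int_0^\infty p u^{p-1}\Pr[\|\eU\|_2\ge u]\,du$, splitting at $u_1 := c\,R\sqrt{\log(2d)/\Bst}$ chosen so that the $2d$ prefactor is absorbed into the exponent for $u\ge u_1$ (i.e.\ $2d\,e^{-\Bst u^2/(4R^2)}\le e^{-\Bst u^2/(8R^2)}$ there); bounding the tail by $1$ on $[0,u_1)$ contributes $u_1^p$, and the Gaussian integral on $[u_1,\infty)$ contributes a Gamma-function multiple of $(R^2/\Bst)^{p/2}$, giving $\EE\|\eU\|_2^p \le \constant_p (R^2\log(2d)/\Bst)^{p/2} + \constant_p'(R^2/\Bst)^{p/2}$ for absolute constants. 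Invoking the batch size $\Bst = \bH\log d$ with $\bH>25$ (equivalently the hypothesis $D_h>25\log d$ of Theorem \ref{thm:1}) makes $\log(2d)/\Bst \le 2/\bH$ and $\Bst\ge\bH$ up to a harmless constant, so both terms collapse to a constant multiple of $(R^2/\bH)^{p/2}$; substituting $R = 2\Hlip\|\xold-\rx\|_2$ and instantiating $p$ at the power for which the right-hand side reads $\constant_h\Hlip^3\|\xold-\rx\|_2^3/\bH^{3/2}$, then absorbing the numerical factors, yields the claim with $\constant_h = 15000$ as a (deliberately loose) admissible constant. Alternatively, a Schatten-$S_q$ matrix moment inequality with $q\asymp\log d$ yields the same $(R^2/\bH)^{p/2}$ bound directly, bypassing the tail integration.

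\textbf{Main obstacle.} The conceptual steps are standard; the real work is the constant bookkeeping in the last step — choosing the split point $u_1$ so that the dimension prefactor $2d$ from matrix concentration is fully swallowed into the exponent without leaving any residual $\log d$ or $d$ in the final bound, and verifying $\Bst^{p/2}\ge \bH^{p/2}$ from $d\ge 2$ up to a harmless constant. It is exactly this interplay between a batch size proportional to $\log d$ and the $\log d$ generated by matrix concentration that cancels the dimension dependence; a cruder Frobenius-norm second-moment estimate would instead leave a factor $d$, which is why the Hessian batch is taken to scale with $\log d$, whereas the gradient batch (Lemma \ref{lemma:grad_reduce_1}), relying only on a vector second-moment estimate, needs no such factor.
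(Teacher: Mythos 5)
Your proposal is correct and would yield the stated bound, but it takes a genuinely different route from the paper. You go through matrix Bernstein, truncate the tail at the deterministic bound $\|\eU\|_2\le R$, and then integrate the sub-Gaussian tail, splitting at $u_1 \asymp R\sqrt{\log(2d)/\Bst}$ to absorb the $2d$ prefactor into the exponent. The paper instead invokes a direct matrix moment inequality (Lemma~\ref{lemma:moment_reduce_tropp}, adapted from \citet{zhou2018stochastic}; it is in essence a Tropp/Chen--Gittens-style Schatten-norm bound with $r\asymp\log d$) that delivers $\big[\EE\|\sum_i\Yb_i\|_2^q\big]^{1/q}$ in one step as a variance term plus a max-norm term, so no tail--moment conversion is needed. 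You in fact flag this very alternative in your last sentence --- your Schatten-$S_q$ route with $q\asymp\log d$ is essentially the paper's proof. The inputs are identical in both approaches: rewrite $\eU$ as a centered average of i.i.d.\ self-adjoint matrices, use Assumption~\ref{assumption:hess_lip} to get the almost-sure bound $\|\Yb_i\|_2\le 2\Hlip\|\xold-\rx\|_2$, and then exploit $\Bst = D_H = \bH\log d$ so the $\log d$ from matrix concentration cancels. The trade-off is that your Bernstein route is built from more elementary and widely known tools but requires the extra truncation-and-integration bookkeeping (which you correctly identify as the main obstacle), while the paper's moment inequality hides that bookkeeping inside the cited lemma and makes the constant chase one line. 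One small remark: the exponent in the statement as printed is a typo --- $\|\eU\|_2^{3/2}$ should be $\|\eU\|_2^{3}$, as both the right-hand side of the claim and its use in Lemma~\ref{lemma_oneloop} make clear --- and your choice of $p=3$ shows you silently corrected it.
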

Lemmas \ref{lemma:grad_reduce_1} and \ref{lemma:hess_reduce_1} suggest that with carefully selection of batch size, both $\EE\|\ev\|_2^{3/2}$ and $\EE\|\eU\|_2^3$ can be bounded by $\EE\|\xold - \rx\|_2^3$.

Now we are ready to prove Lemma \ref{lemma_oneloop}.
\begin{proof}[Proof of Lemma \ref{lemma_oneloop}]
First we combine \eqref{lemma:fxnew_eva_eq} and \eqref{lemma:xnew_eva_eq} to get a bound of $F(\xnew)+\cnew\|\xnew - \rx\|_2^3$. Let \eqref{lemma:fxnew_eva_eq}$+$ $\cnew \times$ \eqref{lemma:xnew_eva_eq}, then we have
\begin{align}\label{main1_1}
    &F(\xnew)+\cnew\|\xnew - \rx\|_2^3 \notag\\ 
    &\leq F(\xold) - \|\hst\|_2^3\big[\Mst/12 - \cnew(1+2\alphast+\betast)\big]+\constant_1\big(\|\ev\|_2^{3/2} /\Mst^{1/2} + \|\eU\|_2^3/\Mst^2\big)\notag\\
    &\quad\quad+\cnew(1+1/\alphast^2+1/\betast^{1/2})\|\xold - \rx\|_2^3.
\end{align}
 Note that $\Gst = [\Mst/12 - \cnew(1+2\alphast+\betast)]/\Mst^{3/2}>0$, then we have following inequalities, which equal \eqref{lemma:mu_evaluate_eq}$\times \Gst+$\eqref{main1_1}:
\begin{align}\label{main1_2}
    &F(\xnew)+\cnew\|\xnew - \rx\|_2^3+\Gst\cdot \mu(\xnew)/\constant_\mu \notag \\
    & \leq
   \|\ev\|_2^{3/2}\cdot(\constant_1 /\Mst^{1/2}+\Gst) + \|\eU\|_2^3\cdot(\constant_1/\Mst^2+\Gst/\Mst^{3/2})\notag \\
   &\quad\quad +F(\xold)+\cnew\big(1+1/\alphast^2+2/\betast^{1/2}\big)\|\xold - \rx\|_2^3.
\end{align}
Next we take total expectation on \eqref{main1_2} firstly, and use Lemmas \ref{lemma:grad_reduce_1} and \ref{lemma:hess_reduce_1} to bound $\EE\|\ev\|_2^{3/2}$ and $\EE\|\eU\|_2^3$, where 
\begin{align*}
    &\EE \|\ev\|_2^{{3/2}} \leq  \frac{\Hlip^{{3/2}}}{\bg^{{3/4}}}\EE\|\xold - \rx\|_2^3,\\
    &\EE\|\eU\|_2^3 \leq  \constant_h \cdot \frac{\Hlip^3}{\bH^{3/2}}\EE\|\xold - \rx\|_2^3.
\end{align*}
Then we get following results:
\begin{align}\label{main1_3}
     &\EE\bigg[F(\xnew)+\cnew\|\xnew - \rx\|_2^3+\Gst\cdot \mu(\xnew)/\constant_\mu \bigg]\notag \\
     &\leq 
     \EE\bigg[F(\xold)+\|\xold - \rx\|_2^3 \cdot\big[\cnew\big(1+1/\alphast^2+2/\betast^{1/2}\big)\notag \\
     &\quad\quad
     +(\constant_1 /\Mst^{1/2}+\Gst)\cdot \Hlip^{3/2}/\bg^{3/4}+\constant_h(\constant_1/\Mst^2+\Gst/\Mst^{3/2})\cdot \Hlip^3/\bH^{3/2}\big]\bigg].
\end{align}
By the definition of $\cold$, \eqref{main1_3} equals the following inequality, which is our result:
\begin{align}
    \Gst/\constant_\mu\cdot \EE(\mu(\xnew)) \leq \EE\bigg[F(\xold)+\cold\cdot \|\xold - \rx\|_2^3\bigg] - \EE\bigg[F(\xnew)+\cnew\cdot \|\xnew - \rx\|_2^3\bigg].
\end{align}
\end{proof}

\subsection{Proof of Lemma \ref{thm:2}}
\begin{proof}[Proof of Lemma \ref{thm:2}]
From the choice of parameters, we have $\Gst = \Gamma_{s',t}$ and $\cold = c_{s',t}$ for any pairs of $(s',s)$, thus for simplification, we define $\Gt = \Gst$ and $\ct = \cold$. Then, the induction equalities \eqref{def_gst} and \eqref{def_cst} can be re-written as the following if we submit $\alphast, \betast, \bg, \bH$ and $\Mst$ into it: 
\begin{align}
    &\Gt = \big[\Hlip\cdot \constant_m/12 - \ctnew(1+2n^{1/6}+n^{2/3})\big]/(\constant_m\Hlip)^{3/2}, \label{def:gt}\\
    & \ct = \ctnew(1+3/n^{1/3})+\big[\constant_1/(\constant_m\Hlip)^{1/2}+\Gt\big]\frac{\Hlip^{3/2}}{n}+\big[\constant_1/(\constant_m\Hlip)^2+\Gt/(\constant_m\Hlip)^{3/2}\big]\frac{\Hlip^3}{n}.\label{def:ct}
\end{align}
Next we submit \eqref{def:gt} into \eqref{def:ct} and re-arrange it, we have:
\begin{align}
    \ct = \ctnew\cdot\bigg(1+\frac{\constant_a}{n^{1/3}}-\frac{\constant_b}{n^{5/6}}-\frac{\constant_c}{n}\bigg)+\constant_d\cdot\frac{\Hlip}{n}, 
\end{align}
where $\constant_a = 3-\constant_m^{-3/2} - \constant_m^{-3}, \constant_b = 2\constant_m^{-3/2}+2\constant_m^{-3}, \constant_c = \constant_m^{-3/2}+\constant_m^{-3}, \constant_d = (\constant_1+1/12)(\constant_m^{-2}+\constant_m^{-1/2})$ are all constants. We can check that 
\begin{align*}
    \frac{3}{n^{1/3}}>\frac{\constant_a}{n^{1/3}}-\frac{\constant_b}{n^{5/6}}-\frac{\constant_c}{n}>0, \constant_d>0,
\end{align*}
when $n>10$, thus by induction and $c_T = 0$, we have following results:
\begin{align}
    0 = c_T\leq\ct\leq c_0<\constant_d\cdot\big[(1+3/n^{1/3})^T-1\big]\cdot n^{-2/3}\Hlip<30\constant_d \cdot n^{-2/3}\Hlip, \label{ceva}
\end{align}
where the last inequality holds because $(1+3n^{-1/3})^{n^{1/3}}<30$. Substituting \eqref{ceva} into \eqref{def:gt} and use the fact that $n>10$, we get our final result:
\begin{align}
    \constant_l\cdot\Hlip^{-1/2}<\Gst<\constant_u\cdot\Hlip^{-1/2},
\end{align}
where $\constant_l = (\constant_m/12 - 60\constant_d)/\constant_m^{3/2}>0, \constant_u = \constant_m^{-1/2}/12>0$ are constants.

\end{proof}

\section{Proof of Technical Lemmas}\label{proof_part2}
In this section, we prove the technical lemmas used in the proof of Lemma \ref{lemma_oneloop}.


\subsection{Proof of Lemma \ref{lemma:mu_evaluate}}

In order to prove Lemma \ref{lemma:mu_evaluate}, we need to the following two lemmas.

\begin{lemma}\citep{zhou2018stochastic}\label{lemma:grad_evaluate}
Under Assumption \ref{assumption:hess_lip}, for any $\hb \in \RR^d$, we have
\begin{align*}
    \|\dF(\xold + \hb)\|_2
    & \leq 
    \frac{\Hlip+2\Mst}{2}\|\hb\|_2^2
    +
    \|\nabla m_t^s(\hb)\|_2+ \|\dF(\xold) - \vst\|_2\\
    &\qquad+  \frac{1}{2\Mst} \|\nabla^2 F(\xold) - \Ust\|_2^2.
\end{align*}
\end{lemma}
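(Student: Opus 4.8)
The plan is to estimate $\dF(\xold+\hb)$ by comparing it both with the second-order Taylor expansion of $F$ at $\xold$ and with the gradient of the cubic model $m_t^s$, and then to absorb the remaining cross term via Young's inequality.

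First I would split
\[
\dF(\xold+\hb) = \big[\dF(\xold+\hb) - \dF(\xold) - \HF(\xold)\hb\big] + \dF(\xold) + \HF(\xold)\hb .
\]
Since each $f_i$ is $\Hlip$-Hessian-Lipschitz (Assumption~\ref{assumption:hess_lip}), the average $F = \frac1n\sum_i f_i$ is $\Hlip$-Hessian-Lipschitz as well, so writing $\dF(\xold+\hb) - \dF(\xold) = \int_0^1 \HF(\xold+\tau\hb)\hb\, d\tau$ and subtracting $\HF(\xold)\hb$ gives the standard remainder bound $\|\dF(\xold+\hb) - \dF(\xold) - \HF(\xold)\hb\|_2 \le \tfrac{\Hlip}{2}\|\hb\|_2^2$.

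Next, recalling that $\nabla m_t^s(\hb) = \vst + \Ust\hb + \tfrac{\Mst}{2}\|\hb\|_2\hb$, I would rewrite the affine part as
\[
\dF(\xold) + \HF(\xold)\hb = \nabla m_t^s(\hb) + \big(\dF(\xold) - \vst\big) + \big(\HF(\xold) - \Ust\big)\hb - \tfrac{\Mst}{2}\|\hb\|_2\hb .
\]
Combining the two displays and applying the triangle inequality (using $\|(\HF(\xold)-\Ust)\hb\|_2 \le \|\HF(\xold)-\Ust\|_2\|\hb\|_2$) yields
\[
\|\dF(\xold+\hb)\|_2 \le \tfrac{\Hlip}{2}\|\hb\|_2^2 + \|\nabla m_t^s(\hb)\|_2 + \|\dF(\xold)-\vst\|_2 + \|\HF(\xold)-\Ust\|_2\|\hb\|_2 + \tfrac{\Mst}{2}\|\hb\|_2^2 .
\]
Finally I would bound the cross term by Young's inequality, $\|\HF(\xold)-\Ust\|_2\|\hb\|_2 \le \tfrac{1}{2\Mst}\|\HF(\xold)-\Ust\|_2^2 + \tfrac{\Mst}{2}\|\hb\|_2^2$, and collect the three quadratic terms $\tfrac{\Hlip}{2}\|\hb\|_2^2 + \tfrac{\Mst}{2}\|\hb\|_2^2 + \tfrac{\Mst}{2}\|\hb\|_2^2 = \tfrac{\Hlip+2\Mst}{2}\|\hb\|_2^2$, which is exactly the claimed inequality.

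The only points requiring (minor) care are justifying that $F$ inherits the $\Hlip$-Hessian-Lipschitz property from the $f_i$'s together with the corresponding Taylor remainder estimate, and choosing the weight in Young's inequality so that the quadratic terms combine to precisely the stated constant $\tfrac{\Hlip+2\Mst}{2}$; the rest is routine bookkeeping. I do not anticipate a genuine obstacle here — the statement is essentially a deterministic decomposition of the gradient into "model gradient $+$ Taylor error $+$ stochastic gradient error $+$ stochastic Hessian error".
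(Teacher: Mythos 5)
Your proposal is correct and takes essentially the same approach as the paper's own proof: both decompose $\dF(\xold+\hb)$ into the Taylor remainder plus $\nabla m_t^s(\hb)$ plus the gradient error plus the Hessian error term minus $\tfrac{\Mst}{2}\|\hb\|_2\hb$, then apply the triangle inequality, the $\Hlip$-Hessian-Lipschitz remainder bound, Cauchy--Schwarz, and Young's inequality with weight $\Mst$ to collect $\tfrac{\Hlip+2\Mst}{2}\|\hb\|_2^2$. The only cosmetic difference is that you group the added-and-subtracted terms around $\dF(\xold)+\HF(\xold)\hb$ while the paper groups them around $\vst+\Ust\hb$; the inequalities used are identical.
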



\begin{lemma}\citep{zhou2018stochastic}\label{lemma:hess_evaluate}
Under Assumption \ref{assumption:hess_lip}, for any $\hb \in \RR^d$, we have
$$ -\lambda_{\min} \big(\nabla^2 F(\xold+\hb)\big) \leq  \frac{\Mst}{2}\|\hst\|_2 + \|\nabla^2 F(\xold) - \Ust\|_2+  \Hlip\|\hb\|_2.$$
\end{lemma}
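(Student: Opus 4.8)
The plan is to derive the bound by combining two essentially independent facts, linked only through the triangle inequality for the spectral norm: the second-order optimality condition satisfied by the exact cubic-subproblem minimizer $\hst$, and the Hessian-Lipschitz assumption, which transfers curvature information from $\xold$ to the shifted point $\xold+\hb$. Throughout I use the variational characterization $\lambda_{\min}(M)=\min_{\|\vb\|_2=1}\vb^\top M\vb$ and the bound $|\vb^\top M\vb|\le\|M\|_2$ for symmetric $M$ and unit $\vb$.

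First I would record the optimality condition. In Algorithm~\ref{algorithm:1}, $\hst$ is the exact global minimizer of the cubic model $m_t^s(\hb)=\la\vst,\hb\ra+\tfrac12\la\Ust\hb,\hb\ra+\tfrac{\Mst}{6}\|\hb\|_2^3$, so by the classical characterization of cubic-regularized models (\citep{Nesterov2006Cubic}, also used in \citep{zhou2018stochastic}),
\begin{align*}
\Ust+\frac{\Mst}{2}\|\hst\|_2\,\mathbf{I}\succeq 0,
\qquad\text{i.e.}\qquad
\lambda_{\min}(\Ust)\ge-\frac{\Mst}{2}\|\hst\|_2 .
\end{align*}
If a self-contained derivation is wanted, it follows from the exact identity $m_t^s(\hb)-m_t^s(\hst)=\tfrac12\la(\Ust+\tfrac{\Mst}{2}\|\hst\|_2\mathbf{I})(\hb-\hst),\hb-\hst\ra+\tfrac{\Mst}{12}(\|\hb\|_2-\|\hst\|_2)^2(2\|\hb\|_2+\|\hst\|_2)$, which holds by stationarity of $\hst$: the last term is nonnegative and vanishes on the sphere $\|\hb\|_2=\|\hst\|_2$, on which $\hb-\hst$ runs through a positive multiple of every direction, so global optimality of $\hst$ forces the quadratic form $\Ust+\tfrac{\Mst}{2}\|\hst\|_2\mathbf{I}$ to be positive semidefinite.

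Then I would run the perturbation argument. Fix an arbitrary unit vector $\vb\in\RR^d$ and split $\HF(\xold+\hb)=\Ust+\big(\HF(\xold)-\Ust\big)+\big(\HF(\xold+\hb)-\HF(\xold)\big)$. Using $\vb^\top\Ust\vb\ge\lambda_{\min}(\Ust)$, the operator-norm bound above, and Assumption~\ref{assumption:hess_lip} applied to $F=\tfrac1n\sum_i f_i$ (whose Hessian is $\Hlip$-Lipschitz by averaging), we get
\begin{align*}
\vb^\top\HF(\xold+\hb)\,\vb
&\ge\lambda_{\min}(\Ust)-\|\HF(\xold)-\Ust\|_2-\Hlip\|\hb\|_2\\
&\ge-\frac{\Mst}{2}\|\hst\|_2-\|\HF(\xold)-\Ust\|_2-\Hlip\|\hb\|_2 .
\end{align*}
Taking the infimum of the left-hand side over unit vectors $\vb$ gives $\lambda_{\min}(\HF(\xold+\hb))\ge-\tfrac{\Mst}{2}\|\hst\|_2-\|\HF(\xold)-\Ust\|_2-\Hlip\|\hb\|_2$, and multiplying by $-1$ is exactly the claimed inequality. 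The only non-routine ingredient is the positive-semidefinite optimality condition $\Ust\succeq-\tfrac{\Mst}{2}\|\hst\|_2\mathbf{I}$; the rest is the triangle inequality together with the variational definition of $\lambda_{\min}$, so I anticipate no real obstacle. Had the subproblem been solved only approximately one would instead need a perturbed version of this condition, but since Algorithm~\ref{algorithm:1} takes the exact $\argmin$ the clean characterization applies.
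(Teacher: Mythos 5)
Your argument is correct and follows exactly the same route as the paper's: decompose $\HF(\xold+\hb)$ using Hessian-Lipschitz continuity, the operator-norm error $\|\HF(\xold)-\Ust\|_2$, and the second-order optimality condition $\Ust+\tfrac{\Mst}{2}\|\hst\|_2\Ib\succeq 0$ from the cubic subproblem (the paper's Lemma~\ref{lemma:cubic_opt}). The paper phrases the chain of bounds directly as positive-semidefinite inequalities $\succeq$, whereas you pass through the variational characterization $\lambda_{\min}(M)=\min_{\|\vb\|_2=1}\vb^\top M\vb$; these are equivalent, and your optional self-contained derivation of the optimality condition is also sound.
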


\begin{proof}[Proof of Lemma \ref{lemma:mu_evaluate}]
Note that $\xnew = \xold+\hst$, so we use Lemma \ref{lemma:grad_evaluate} and Lemma \ref{lemma:hess_evaluate} to bound $\mu(\xnew)$, where we set $\hb = \hst$. To bound $\|\dF(\xnew)\|_2^{{3/2}}$, we apply Lemma \ref{lemma:grad_evaluate}:
\begin{align*}
   & \|\dF(\xnew)\|_2^{{3/2}}\\
   & \leq
   \Big[ 
    \frac{\Hlip+2\Mst}{2}\|\hst\|_2^2
    +
    \|\nabla m_t^s(\hst)\|_2+ \|\dF(\xold) - \vst\|_2 +  \frac{1}{2\Mst} \|\nabla^2 F(\xold) - \Ust\|_2^2 \Big]^{{3/2}}\\
   & \leq 2\bigg[\bigg(\frac{\Hlip+2\Mst}{2}\bigg)^{{3/2}}\|\hst\|_2^3 + \|\dF(\xold) - \vst\|_2^{{3/2}} + \|\nabla m_t^s(\hst)\|_2^{3/2}\\
   &\qquad+ \bigg(\frac{1}{2\Mst}\bigg)^{{3/2}} \|\nabla^2 F(\xold) - \Ust\|_2^3\bigg] \\
   & \leq
   18\Big[\Mst^{{3/2}}\|\hst\|_2^3 + \|\dF(\xold) - \vst\|_2^{{3/2}} +   \Mst^{-{3/2}}\|\nabla^2 F(\xold) - \Ust\|_2^3\Big],
\end{align*}
where the second inequality holds due to the following basic inequality
\begin{align*}
(a+b+c+d)^{{3/2}} \leq 2(a^{{3/2}}+b^{{3/2}}+c^{{3/2}}+d^{3/2}),
\end{align*}
and in the last inequality we use the assumption that $\Hlip<\Mst$ and the fact that $2\cdot(3/2)^{3/2}\leq 18$ and $\nabla m_t^s(\hst) = 0$. Next we bound $-\Mst^{-{3/2}}\big[\lambda_{\min} \big(\nabla^2 F(\xnew)\big)\big]^3$. By Lemma \ref{lemma:hess_reduce_1}, we have
\begin{align*}
   -\Mst^{-{3/2}}\big[\lambda_{\min} \big(\nabla^2 F(\xnew)\big)\big]^3 
   &\leq \Mst^{-{3/2}}\Big[\frac{\Mst}{2}\|\hst\|_2 + \|\nabla^2 F(\xold) - \Ust\|_2+  \Hlip\|\hst\|_2\Big]^3 \\
    & \leq 
    9\Mst^{-{3/2}}\cdot\bigg[\frac{\Mst^3}{8}\|\hst\|_2^3 + \|\nabla^2 F(\xold) - \Ust\|_2^3+ \Mst^3 \|\hst\|_2^3\bigg] \\
    & \leq
    9\Big[2\Mst^{{3/2}}\|\hst\|_2^3 +  \Mst^{-{3/2}}\|\nabla^2 F(\xold) - \Ust\|_2^3 \Big],
\end{align*}
where the second inequality holds due to $(a+b+c)^3 \leq 9(a^3+b^3+c^3)$. Thus, we have 
\begin{align*}
\mu(\xnew)& = \max \Big\{\|\dF(\xnew)\|_2^{{3/2}},  -\Mst^{-{3/2}}\big[\lambda_{\min} \big(\nabla^2 F(\xnew)\big)\big]^3\Big\} \\
& \leq 
 18\Big[\Mst^{{3/2}}\|\hst\|_2^3 + \|\dF(\xold) - \vst\|_2^{{3/2}}+   \Mst^{-{3/2}}\|\nabla^2 F(\xold) - \Ust\|_2^3\Big],
\end{align*}
which completes the proof. 
\end{proof}

\subsection{Proof of Lemma \ref{lemma:fxnew_eva}}
We have the following lemma about the sub-problem in cubic regularization:
\begin{lemma}\label{lemma:cubic_opt}\citep{zhou2018stochastic}
Let $\vst, \Ust, \hst, \Mst$ be variables defined by Algorithm \ref{algorithm:1}, then we have following basic results:
\begin{align*}
  &\vst +\Ust \hst + \frac{\Mst}{2}\|   \hst \|_2 \hst = 0,\\
   &\Ust+\frac{\Mst}{2}\|\hst\|_2\Ib \succeq 0,\\
   &\la\vst,\hst\ra + \frac{1}{2}\la\Ust\hst,\hst\ra+\frac{\Mst}{6}\|\hst\|_2^3 \leq -\frac{\Mst}{12}\|\hst\|_2^3.
\end{align*}
\end{lemma}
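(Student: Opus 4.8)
The plan is to prove the three items of Lemma~\ref{lemma:cubic_opt} — first-order stationarity, the curvature inequality $\Ust+\tfrac{\Mst}{2}\|\hst\|_2\Ib\succeq0$, and the model-value bound — in that order, regarding them as the classical global-optimality conditions for the minimizer $\hst$ of the cubic model $m_t^s(\hb)=\la\vst,\hb\ra+\tfrac12\la\Ust\hb,\hb\ra+\tfrac{\Mst}{6}\|\hb\|_2^3$; the third item will then follow from the first two by pure algebra.

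First I would observe that $\hb\mapsto\|\hb\|_2^3$ is continuously differentiable on all of $\RR^d$, with gradient $3\|\hb\|_2\hb$ which vanishes at the origin, so $m_t^s$ is $C^1$ with $\nabla m_t^s(\hb)=\vst+\Ust\hb+\tfrac{\Mst}{2}\|\hb\|_2\hb$; since $\hst$ is a global minimizer, $\nabla m_t^s(\hst)=0$, which is exactly the first identity. For the curvature inequality I would restrict attention to the sphere of radius $r:=\|\hst\|_2$, on which the cubic term is constant, so $\hst$ minimizes the quadratic $\la\vst,\hb\ra+\tfrac12\la\Ust\hb,\hb\ra$ over $\{\hb:\|\hb\|_2=r\}$. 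If $r=0$, the first identity forces $\vst=\mathbf{0}$, and then $m_t^s(\tau\ub)\ge m_t^s(\mathbf{0})=0$ for every $\tau\ge0$ and unit $\ub$ gives, after dividing by $\tau^2$ and letting $\tau\downarrow0$, $\la\Ust\ub,\ub\ra\ge0$, i.e. $\Ust\succeq0$. If $r>0$, the global minimizer of a quadratic over a sphere satisfies $\Ust+\lambda\Ib\succeq0$ together with $(\Ust+\lambda\Ib)\hst=-\vst$ for a multiplier $\lambda$ that is unique since $\hst\neq\mathbf{0}$, and matching this with the first identity pins down $\lambda=\tfrac{\Mst}{2}\|\hst\|_2$; in both cases $\Ust+\tfrac{\Mst}{2}\|\hst\|_2\Ib\succeq0$. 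Finally, for the third item I would use the first identity to write $\la\Ust\hst,\hst\ra=-\la\vst,\hst\ra-\tfrac{\Mst}{2}\|\hst\|_2^3$, which collapses the left-hand side to $-\tfrac12\la\Ust\hst,\hst\ra-\tfrac{\Mst}{3}\|\hst\|_2^3$, and then apply the curvature inequality in the form $\la\Ust\hst,\hst\ra\ge-\tfrac{\Mst}{2}\|\hst\|_2^3$ to bound this by $-\tfrac{\Mst}{12}\|\hst\|_2^3$.

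The only genuinely delicate point is the curvature inequality: because $\|\hb\|_2^3$ is not twice differentiable at the origin one cannot simply read off a Hessian, and one must correctly identify the effective multiplier $\tfrac{\Mst}{2}\|\hst\|_2$; reducing to a trust-region-type subproblem on the sphere, with the degenerate case $\hst=\mathbf{0}$ handled separately as above, is the cleanest way to do this. Once Lemma~\ref{lemma:cubic_opt} is in hand, its third inequality controls the cubic-model part of $F(\xnew)-F(\xold)$, and combining it with a Hessian-Lipschitz second-order Taylor estimate around $\xold$ — which absorbs the mismatch between $(\vst,\Ust)$ and $(\dF(\xold),\HF(\xold))$ into the error terms $\|\ev\|_2^{3/2}/\Mst^{1/2}$ and $\|\eU\|_2^3/\Mst^2$ — yields Lemma~\ref{lemma:fxnew_eva}.
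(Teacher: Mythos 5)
Your proposal is correct and follows the same overall route as the paper's (commented-out) proof: first-order stationarity from $\nabla m_t^s(\hst)=\zero$, the curvature inequality from the observation that $\hst$ minimizes the quadratic part of $m_t^s$ over the sphere $\|\hb\|_2=\|\hst\|_2$ (where the cubic term is constant), and the final bound by pure algebra from the first two. The one substantive difference is in how the curvature condition $\Ust+\tfrac{\Mst}{2}\|\hst\|_2\Ib\succeq0$ is established: you invoke the classical Gay/Mor\'e--Sorensen characterization of global minimizers of a quadratic on a sphere as a black box and then match the multiplier $\lambda=\tfrac{\Mst}{2}\|\hst\|_2$ from the first-order identity, whereas the paper (following Lemma~5.1 of Agarwal et al.) re-derives the semidefiniteness from scratch by choosing, for each $\wb$ with $\la\wb,\hst\ra\neq0$, the explicit $\eta=-2\la\wb,\hst\ra/\|\wb\|_2^2$ so that $\|\hst+\eta\wb\|_2=\|\hst\|_2$ and expanding $m(\hst+\eta\wb)-m(\hst)=\tfrac{\eta^2}{2}\wb^\top(\Ust+\lambda\Ib)\wb\geq0$. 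The paper's route is self-contained; yours is shorter at the cost of citing the trust-region result. You also explicitly handle the degenerate case $\hst=\zero$ via a limiting argument — a case the paper's second-order necessary condition \eqref{app:1-5} does not cover, since $\hst/\|\hst\|_2$ is undefined there — which is a small but genuine tightening of the argument. Your algebra for the third item is a minor rearrangement of the paper's identity but gives the same bound $-\tfrac{\Mst}{12}\|\hst\|_2^3$.
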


\begin{proof}[Proof of Lemma \ref{lemma:fxnew_eva}]
Note that $\xnew = \xold + \hst$, thus we apply Corollary \ref{assumption:hess_lip_cor}, and we have:
\begin{align}\label{main_2_0}
F(\xnew)& = F(\xold + \hst) \leq F(\xold) + \la \dF(\xold), \hst \ra + \frac{1}{2}\la \HF(\xold)\hst, \hst \ra + \frac{\Hlip}{6}\|\hst\|_2^3 \notag \\
& = 
 F(\xold) + \la \vst, \hst \ra + \frac{1}{2}\la \Ust\hst, \hst \ra + \frac{\Mst}{6}\|\hst\|_2^3 + \la \ev, \hst \ra \notag\\
 &\qquad+ \frac{1}{2}\la \eU\hst, \hst \ra+\frac{\Hlip-\Mst}{6}\|\hst\|_2^3.
\end{align}
From Lemma \ref{lemma:cubic_opt} we have
\begin{align}\label{main_2_1}
    \la \vst, \hst \ra + \frac{1}{2}\la \Ust\hst, \hst \ra + \frac{\Mst}{6}\|\hst\|_2^3 \leq \frac{-\Mst}{12}\|\hst\|_2^3.
\end{align}
Meanwhile, we have following two bounds on $\la \ev, \hst \ra$ and $\la \eU\hst, \hst \ra$ by Young's inequality:
\begin{align}
    &\la \ev, \hst \ra \leq \constant_4\|\ev\|_2\cdot \frac{1}{\constant_4}\|\hst\|_2 \leq \constant_4^{3/2}\|\ev\|_2^{3/2}+\frac{1}{\constant_4^3}\|\hst\|_2^3,\label{main_2_2}\\
    &\la \eU\hst, \hst \ra \leq \constant_5^2\|\eU\|_2\cdot\bigg(\frac{\|\hst\|_2}{\constant_5}\bigg)^2 \leq \constant_5^6\|\eU\|_2^3 + \frac{\|\hst\|_2^3}{\constant_5^3}.\label{main_2_3}
\end{align}
We set $\constant_4 = \constant_5 = (18/\Mst)^{1/3}$. Finally, because $\Hlip \leq \Mst/2$, we have
\begin{align}\label{main_2_4}
    \frac{\Hlip-\Mst}{6}\|\hst\|_2^3 \leq \frac{-\Mst}{12}\|\hst\|_2^3.
\end{align}
Substituting \eqref{main_2_1}, \eqref{main_2_2}, \eqref{main_2_3} and \eqref{main_2_4} into \eqref{main_2_0}, we have the final result:
\begin{align}
    F(\xnew) \leq F(\xold) - \frac{\Mst}{12}\|\hst\|_2^3 + \constant_1\bigg(\frac{\|\ev\|_2^{3/2}}{\Mst^{1/2}}+\frac{\|\eU\|_2^3}{\Mst^2}\bigg), 
\end{align}
where $\constant_1 = 200$.
\end{proof}

\subsection{Proof of Lemma \ref{lemma:xnew_eva}}
\begin{proof}[Proof of Lemma \ref{lemma:xnew_eva}]
Note that $\xnew = \xold+\hst$, then we have
\begin{align}\label{main2_8}
    \|\xnew - \rx\|_2^3 
    & \leq \big(\|\xold-\rx\|_2+\|\hst\|_2\big)^3 \notag\\
    & = \|\xold-\rx\|_2^3+\|\hst\|_2^3+3\|\xold-\rx\|_2^2\cdot\|\hst\|_2+3\|\xold-\rx\|_2\cdot\|\hst\|_2^2. 
\end{align}
The inequality holds due to triangle inequality. Next, we bound $\|\xold-\rx\|_2^2\cdot\|\hst\|_2$ and $\|\xold-\rx\|_2\cdot\|\hst\|_2^2$ by Young's inequality:
\begin{align}
    &\|\xold-\rx\|_2^2\cdot\|\hst\|_2 \leq \|\xold - \rx\|_2^3\cdot \frac{2}{3\betast^{1/2}}+\|\hst\|_2^3\cdot \frac{\betast}{3}, \label{main2_6}\\
    &\|\xold-\rx\|_2\cdot\|\hst\|_2^2 \leq \|\xold - \rx\|_2^3\cdot\frac{1}{3\alphast^2}+\|\hst\|_2^3\cdot \frac{2\alphast}{3}\label{main2_7}.
\end{align}
Substituting \eqref{main2_6}, \eqref{main2_7} into \eqref{main2_8}, we have the result:
\begin{align*}
    \|\xnew - \rx\|_2^3 \leq \|\xold - \rx\|_2^3\cdot(1+1/\alphast^2+2/\betast^{1/2})+\|\hst\|_2^3\cdot(1+2\alphast+\betast).
\end{align*}
\end{proof}
\subsection{Proof of Lemma \ref{lemma:grad_reduce_1}}
First we have the following lemma:
\begin{lemma}\citep{zhou2018stochastic}\label{lemma:moment_reduce_grad}
(Moment reduce)Suppose that $0<\alpha <2$, $\ab_1, \dots, \ab_N$ iid, $\EE \ab_i = 0$, then
\begin{align*}
    \EE\bigg\|\frac{1}{N} \sum_{i=1}^N \ab_i\bigg\|_2^{\alpha} \leq \frac{1}{N^{\alpha/2}}\big( \EE \|\ab_i\|_2^2 \big)^{\alpha/2}.
\end{align*}
\end{lemma}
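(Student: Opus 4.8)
The plan is to derive the bound from two elementary ingredients: the concavity of the map $u \mapsto u^{\alpha/2}$ on $[0,\infty)$ — which is exactly where the hypothesis $\alpha<2$, equivalently $\alpha/2<1$, is used — together with the standard fact that a normalized sum of independent, mean-zero vectors has second moment decaying like $1/N$. If $\EE\|\ab_1\|_2^2=+\infty$ the inequality is vacuous, so I may assume this quantity is finite.

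First I would rewrite the left-hand side as a half-power of the squared norm and apply Jensen's inequality to the concave function $u\mapsto u^{\alpha/2}$, moving the expectation inside the power:
\begin{align*}
\EE\bigg\|\frac1N\sum_{i=1}^N\ab_i\bigg\|_2^{\alpha}
= \EE\bigg[\bigg(\bigg\|\frac1N\sum_{i=1}^N\ab_i\bigg\|_2^{2}\bigg)^{\alpha/2}\bigg]
\le \bigg(\EE\bigg\|\frac1N\sum_{i=1}^N\ab_i\bigg\|_2^{2}\bigg)^{\alpha/2}.
\end{align*}
Next I would expand the inner second moment as $\big\|\frac1N\sum_i\ab_i\big\|_2^2=\frac1{N^2}\sum_{i,j}\la\ab_i,\ab_j\ra$ and take expectations termwise. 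For $i\ne j$, independence together with $\EE\ab_i=0$ gives $\EE\la\ab_i,\ab_j\ra=\la\EE\ab_i,\EE\ab_j\ra=0$, so only the $N$ diagonal terms survive, and by identical distribution each equals $\EE\|\ab_1\|_2^2$; hence $\EE\big\|\frac1N\sum_i\ab_i\big\|_2^2=\frac1N\EE\|\ab_1\|_2^2$. Substituting this into the Jensen bound yields
\begin{align*}
\EE\bigg\|\frac1N\sum_{i=1}^N\ab_i\bigg\|_2^{\alpha}
\le \bigg(\frac1N\EE\|\ab_1\|_2^2\bigg)^{\alpha/2}
= \frac{1}{N^{\alpha/2}}\big(\EE\|\ab_1\|_2^2\big)^{\alpha/2},
\end{align*}
which is the claim.

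There is no genuinely hard step; the only point that needs care is the direction of Jensen's inequality — one must invoke concavity of $u^{\alpha/2}$, which is valid precisely because $\alpha/2<1$ (for $\alpha>2$ the inequality would reverse) — and the legitimacy of the second-moment manipulation, which is immediate once $\EE\|\ab_1\|_2^2<\infty$. So the entire argument is essentially "concavity plus uncorrelatedness," and I would expect to present it in just these two short moves.
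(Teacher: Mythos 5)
Your proof is correct and is essentially the paper's proof: the paper invokes Lyapunov's inequality $(\EE|X|^\alpha)^{1/\alpha}\le(\EE|X|^2)^{1/2}$ for $0<\alpha<2$, which is precisely the Jensen/concavity step you spell out, and then uses independence and zero mean to reduce the second moment to $\tfrac1N\EE\|\ab_1\|_2^2$, exactly as you do.
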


\begin{proof}[Proof of Lemma \ref{lemma:grad_reduce_1}]
For simplicity, we use $\EE$ to replace $\EE_{\vst}$. First, to rearrange $\vst - \dF(\xold)$, we have
\begin{align*}
    \EE \|\dF(\xold) - \vst\|_2^{{3/2}}  = \EE\bigg\|\frac{1}{\bst} \sum_{i_t \in I_g} \bigg[\df _{i_t}(\xold) -  \df _{i_t}(\rx) +  \rg - 
    \dF(\xold)\bigg]\bigg\|_2^{3/2}.
\end{align*}
Now we use Lemma \ref{lemma:moment_reduce_grad}.  We set 
$\alpha  = 3/2,N=\bst$, and $\ab_i = \df _{i_t}(\xold) -  \df _{i_t}(\rx) -  \dF(\xold) +  \rg$,
we can check that $\alpha, N, \ab_i$ satisfy the assumption of Lemma \ref{lemma:moment_reduce_grad}. Thus, by Lemma  \ref{lemma:moment_reduce_grad}, we have 
\begin{align}\label{grad_reduce_2}
     \EE \|\dF(\xold) - \vst\|_2^{3/2} 
     & \leq \frac{1}{\bst^{{3/4}}}\bigg( \EE \|  \df _{i_t}(\xold) -  \df _{i_t}(\rx) +\rg -  \dF(\xold)\|_2^2\bigg)^{3/4}. 
\end{align}
Next we bound $\|  \df _{i_t}(\xold) -  \df _{i_t}(\rx) +\rg -  \dF(\xold)\|_2$. By Assumption \ref{assumption:grad_lip}, we have
\begin{align}\label{grad_reduce_1}
     &\| \df _{i_t}(\xold) -  \df _{i_t}(\rx) +\rg -  \dF(\xold)\|_2 
     \notag\\
     &\leq  \|\df _{i_t}(\xold) -  \df _{i_t}(\rx)\|_2 +\|\rg -  \dF(\xold)\|_2 \notag \\
     & =  \|\df _{i_t}(\xold) -  \df _{i_t}(\rx)\|_2 +\|\dF(\rx) -  \dF(\xold)\|_2\notag \\
     &\leq 2\glip\|\xold - \rx\|_2.
\end{align}
Finally, substituting \eqref{grad_reduce_1} and $\bst = b/\|\xold - \rx\|_2^2\cdot 4\glip^2/\Hlip^2$ into \eqref{grad_reduce_2}, we have 
\begin{align*}
     \EE \|\dF(\xold) - \vst\|_2^{3/2} 
     &\leq \frac{\Hlip^{3/2}}{(4\bg)^{3/4}\glip^{3/2}}\cdot    \|\xold - \rx\|_2^{3/2}\cdot
     \bigg(4\glip^2\|\xold - \rx\|_2^2\bigg)^{3/4} \notag \\
     &\leq \frac{\Hlip^{{3/2}}}{\bg^{3/4}}\|\xb_t^{s+1} - \hat{\xb}^s\|_2^3.
\end{align*}
\end{proof}

\subsection{Proof of Lemma \ref{lemma:hess_reduce_1}}
First we have the following lemma:
\begin{lemma}\citep{zhou2018stochastic}\label{lemma:moment_reduce_tropp}
Suppose that $q \geq 2, p \geq 2$, and fix $r \geq \max\{q, 2 \log p\}$. Consider $\Yb_1, ..., \Yb_N$ of i.i.d. random self-adjoint matrices with dimension $p \times p$, $\EE \Yb_i = \zero$, then
$$\bigg[\EE\bigg\|\sum_{i=1}^N\Yb_i\bigg\|_2^q\bigg]^{1/q}\leq 2\sqrt{er}\Big\|\Big(\sum_{i=1}^N\EE \Yb_i^2 \Big)^{1/2}\Big\|_2+4er\big(\EE \max_i \|\Yb_i\|_2 ^q\big)^{1/q}.$$
\end{lemma}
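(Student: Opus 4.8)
The plan is to deduce this operator-norm moment bound from a matrix Rosenthal--Pinelis inequality in the Schatten class $S_r$, the large index $r\ge 2\log p$ entering only to neutralize the ambient dimension. \textbf{Step 1 (operator norm $\to$ Schatten norm).} For every self-adjoint $A\in\RR^{p\times p}$ one has $\|A\|_2\le\|A\|_{S_r}\le p^{1/r}\|A\|_2$, and the hypothesis $r\ge 2\log p$ forces $p^{1/r}\le e^{1/2}$. Consequently
\[
\Big(\EE\Big\|\sum_{i=1}^N\Yb_i\Big\|_2^q\Big)^{1/q}\le\Big(\EE\Big\|\sum_{i=1}^N\Yb_i\Big\|_{S_r}^q\Big)^{1/q},
\]
and at the end of the argument I will replace the two Schatten norms that appear on the right by $e^{1/2}$ times the corresponding operator norms, using $\|\Yb_i\|_{S_r}\le e^{1/2}\|\Yb_i\|_2$ and $\big\|(\sum_i\EE\Yb_i^2)^{1/2}\big\|_{S_r}\le e^{1/2}\big\|(\sum_i\EE\Yb_i^2)^{1/2}\big\|_2$. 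The moment exponent $q$ is never touched; the role of $r$ is purely to absorb the factor $p^{1/r}$.

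\textbf{Step 2 (matrix Rosenthal--Pinelis in $S_r$).} The core estimate is, for $q\ge 2$ and using $q\le r$,
\[
\Big(\EE\Big\|\sum_{i=1}^N\Yb_i\Big\|_{S_r}^q\Big)^{1/q}\ \lesssim\ \sqrt{r}\,\Big\|\Big(\sum_{i=1}^N\EE\Yb_i^2\Big)^{1/2}\Big\|_{S_r}\ +\ r\,\Big(\EE\max_i\|\Yb_i\|_{S_r}^q\Big)^{1/q},
\]
with universal constants. I would establish it from the fact that $S_r$ is $2$-uniformly smooth for $r\ge 2$ with modulus constant of order $\sqrt r$ (Ball--Carlen--Lieb): viewing $\sum_i\Yb_i$ as a martingale with independent mean-zero increments and iterating the smoothness inequality while conditioning at each step makes all cross terms vanish and gives, as a warm-up, $\EE\big\|\sum_i\Yb_i\big\|_{S_r}^2\lesssim r\,\EE\sum_i\|\Yb_i\|_{S_r}^2$. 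Promoting the squared moment to the $q$-th moment and trading the random quadratic variation $\sum_i\Yb_i^2$ for its mean is done through the identity $\big\|(\sum_i\Yb_i^2)^{1/2}\big\|_{S_r}^2=\big\|\sum_i\Yb_i^2\big\|_{S_{r/2}}$, the decomposition $\sum_i\Yb_i^2=\sum_i\EE\Yb_i^2+\sum_i(\Yb_i^2-\EE\Yb_i^2)$, and a self-bounding step in which the fluctuation term $\sum_i(\Yb_i^2-\EE\Yb_i^2)$ is controlled by $\big(\EE\max_i\|\Yb_i\|_{S_r}^q\big)^{2/q}$ times the quantity being bounded, turning the whole estimate into a scalar inequality $X\lesssim a+b\sqrt X$ whose solution is the displayed bound. (Equivalently: symmetrize with an independent copy $\Yb_i'$ and Rademacher signs $\varepsilon_i$ at the cost of a factor $2$, apply the noncommutative Khintchine inequality of Lust-Piquard--Pisier with Buchholz's sharp constant of order $\sqrt r$ to the conditional Rademacher sum $\sum_i\varepsilon_i\Yb_i$, and then run the same split-and-iterate for $\sum_i\Yb_i^2$; or simply invoke the matrix Rosenthal--Pinelis inequality of Chen--Gittens--Tropp as a black box.)

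\textbf{Step 3 (collecting constants and the main obstacle).} Combining Steps 1 and 2 and converting the Schatten norms on the right to operator norms yields a bound of exactly the claimed shape, namely a constant times $\sqrt r$ multiplying $\big\|(\sum_i\EE\Yb_i^2)^{1/2}\big\|_2$ plus a constant times $r$ multiplying $\big(\EE\max_i\|\Yb_i\|_2^q\big)^{1/q}$; pinning the universal constants to $2\sqrt{er}$ and $4er$ respectively is then a matter of carrying the factor $e^{1/2}$ from Step~1 through the explicit constants in the smoothness (or Khintchine) estimate of Step~2. The delicate part is precisely that last accounting inside Step~2: making the self-bounding recursion for the quadratic-variation fluctuation $\sum_i(\Yb_i^2-\EE\Yb_i^2)$ rigorous while keeping the constants linear in $r$ (resp.\ $\sqrt r$) rather than accumulating spurious logarithmic or polynomial overheads. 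If one is willing to cite the known matrix Rosenthal--Pinelis inequality, the remaining work (Steps~1 and 3) is entirely routine.
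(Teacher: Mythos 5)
Your proposal follows essentially the same route as the paper's: symmetrize with an independent copy and Rademacher signs, pass to the Schatten-$r$ norm so that noncommutative Khintchine produces the $\sqrt{r}$ factor while $r\ge 2\log p$ neutralizes the dimension constant $p^{1/r}\le\sqrt{e}$, and then control the random quadratic variation $\sum_i\Yb_i^2$ by invoking a moment bound for sums of independent positive-semidefinite matrices (Chen--Gittens--Tropp). The paper executes exactly your parenthetical alternative, applying that last bound as a black-box proposition rather than redoing the self-bounding recursion, so the two arguments coincide.
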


\begin{proof}[Proof of Lemma \ref{lemma:hess_reduce_1}]
We replace $\EE_{\Ust}$ with $\EE$ for simplification. We have
\begin{align}\label{eq:hess_reduce_1_2}
    \EE\|\nabla^2 F(\xold) - \Ust\|_2^3  &= \EE \bigg\|\nabla^2 F(\xold)-\frac{1}{\Bst}\bigg(\sum_{j_t \in I_h} \big(\Hf_{j_t}(\xold) - \Hf_{j_t}(\rx) +\rH\big)\bigg)\bigg\|_2^3 \notag\\
    & = \EE \bigg\|\frac{1}{\Bst}\bigg[\sum_{j_t \in I_h} \big[\Hf_{j_t}(\xold) - \Hf_{j_t}(\rx) +\rH -\nabla^2 F(\xold) \big]\bigg] \bigg\|_2^3. 
\end{align}
We use Lemma \ref{lemma:moment_reduce_tropp}, set
$$q=3,p=d, r = 2\log p , \Yb_i = \Hf_{j_t}(\xold) - \Hf_{j_t}(\rx) +\rH -\nabla^2 F(\xold),N=\Bst.$$
We can check that these parameters satisfy the Assumption \ref{lemma:moment_reduce_tropp}. Meanwhile, $\Yb_i$ are i.i.d random variables, and by Assumption \ref{assumption:hess_lip}, we have
\begin{align}\label{eq:hess_reduce_1_3}
\|\Yb_i\|_2 &=\Big\|\Hf_{j_t}(\xold) - \Hf_{j_t}(\rx) +\rH -\nabla^2 F(\xold)  \Big\|_2 \notag\\
 &\leq 
 \Big\|\Hf_{j_t}(\xold) - \Hf_{j_t}(\rx) \Big\|_2 + \Big\|\rH -\nabla^2 F(\xold)  \Big\|_2 \notag\\
 &\leq  
 \Hlip \|\xold - \rx\|_2 +  \Hlip \|\xold - \rx\|_2 \notag \\
 &= 2\Hlip \|\xold - \rx\|_2 .
\end{align} 
By Lemma \ref{lemma:moment_reduce_tropp}, we have
\begin{align}\label{eq:hess_reduce_1_5}
   \Big[\EE\bigg\|\sum_{i=1}^{\Bst} \Yb_i\bigg\|_2^3\Big]^{1/3}\leq 2\sqrt{er}\Big\|\Big(\sum_{i=1}^{\Bst} \EE \Yb_i^2 \Big)^{1/2}\Big\|_2+4er\big(\EE \max_i \|\Yb_i\|_2 ^3\big)^{1/3}. 
\end{align}
Next we give the upper bounds of \eqref{eq:hess_reduce_1_5}. The first term in RHS of \eqref{eq:hess_reduce_1_5} can be bounded as
\begin{align}\label{eq:hess_reduce_1_6}
    2\sqrt{er}\Big\|\Big(\sum_{i=1}^{\Bst}\EE \Yb_i^2 \Big)^{1/2}\Big\|_2 &= 2\sqrt{er}\Big\|\sum_{i=1}^{\Bst}\EE \Yb_i^2\Big\|_2 ^{1/2} \notag \\
    &=  2\sqrt{\Bst er}\Big\|\EE \Yb_i^2\Big\|_2 ^{1/2} \notag\\
    & \leq 
     2\sqrt{\Bst er}\Big(\EE \|\Yb_i\|^2\Big)_2 ^{1/2}\notag \\
     &\leq 
     4\Hlip\sqrt{\Bst er} \|\xold - \rx\|_2 ,
\end{align}
where the last inequality holds due to \eqref{eq:hess_reduce_1_3}.
The second term in RHS of \eqref{eq:hess_reduce_1_5} can be bounded as
\begin{align}\label{eq:hess_reduce_1_7}
   4er\big(\EE \max_i \|\Yb_i\|_2 ^3\big)^{1/3} &\leq 4er[(2\Hlip \|\xold - \rx\|_2)^3]^{1/3}\notag \\
   &= 8\Hlip er  \|\xold - \rx\|_2.
\end{align}
Substituting \eqref{eq:hess_reduce_1_6}, \eqref{eq:hess_reduce_1_7} into \eqref{eq:hess_reduce_1_5}, we have
\begin{align*}\label{eq:hess_reduce_1_8}
   \Big[\EE\bigg\|\sum_{i=1}^{\Bst}\Yb_i\bigg\|_2^3\Big]^{1/3}&\leq 4\Hlip\sqrt{\Bst er} \|\xold - \rx\|_2+8\Hlip er  \|\xold - \rx\|_2,
\end{align*}
which equals
\begin{align}
 \EE\bigg\|\frac{1}{\Bst}\sum_{i=1}^{\Bst}\Yb_i\bigg\|_2^3 & \leq  64 \Hlip^3\bigg(\sqrt{\frac{er}{\Bst}}+ \frac{2er}{\Bst}\bigg)^3\|\xold - \rx\|_2^3.
\end{align}
Substituting \eqref{eq:hess_reduce_1_8} into \eqref{eq:hess_reduce_1_2}, we have
\begin{align*}
     \EE\|\nabla^2 F(\xold) - \Ust\|_2^3  &\leq 64 \Hlip^3\bigg(\sqrt{\frac{2e\log d}{\Bst}}+ \frac{4e\log d}{\Bst}\bigg)^3\|\xold - \rx\|_2^3 \\
     & = 
     64 \Hlip^3\bigg(\sqrt{\frac{2e}{\bH}}+ \frac{4e}{\bH}\bigg)^3\|\xold - \rx\|_2^3 \\
     &\leq 15000 \cdot \frac{\Hlip^3}{\bH^{3/2}}\|\xold - \rx\|_2^3,
\end{align*}
where the last inequality holds due to 
\begin{align*}
  64\bigg(\sqrt{\frac{2e}{\bH}}+ \frac{4e}{\bH}\bigg)^3 \leq \frac{15000}{\bH^{3/2}},
\end{align*}
when $\bH>25$.
\end{proof}

\section{Additional Lemmas}
We have the following equivalent definition for Hessian Lipschitz, where the proof can be found in \citet{Nesterov2006Cubic}:
\begin{corollary}\label{assumption:hess_lip_cor}
Suppose $F$ is a $\Hlip$ Hessian Lipschitz function, then we have following results:
\begin{align*}
    \|\HF(\xb) - \HF (\yb)\| &\leq \Hlip\|\xb - \yb\|_2 ,\\
     F(\xb+\hb) &\leq F(\xb)+ \la \dF(\xb), \hb \ra + \frac{1}{2}\la \HF(\xb)\hb, \hb \ra + \frac{\Hlip}{6}\|\hb\|_2^3, \\
     \|\dF(\xb+\hb) - \dF(\xb) - \HF(\xb)\hb\|_2 &\leq \frac{\Hlip}{2}\|\hb\|_2^2.
\end{align*}
\end{corollary}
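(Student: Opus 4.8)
The plan is to reduce all three displayed inequalities to the single structural fact that $F$ is $\Hlip$-Hessian Lipschitz, and then feed that into the integral (Taylor) representations of $\dF$ and of $F$. First I would dispose of the first line: either it is simply a restatement of the hypothesis ``$F$ is $\Hlip$-Hessian Lipschitz,'' or, if one starts instead from Assumption \ref{assumption:hess_lip} on the components, it follows from $\HF = \frac{1}{n}\sum_{i=1}^n \Hf_i$ together with the triangle inequality, since $\|\HF(\xb) - \HF(\yb)\|_2 \le \frac{1}{n}\sum_{i=1}^n \|\Hf_i(\xb) - \Hf_i(\yb)\|_2 \le \Hlip\|\xb - \yb\|_2$. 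From this point on I would use only the Hessian-Lipschitz property of $F$ itself and never the individual $f_i$ again.

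For the third line I would integrate the Hessian along the segment $\xb + t\hb$, $t \in [0,1]$, writing $\dF(\xb+\hb) - \dF(\xb) = \int_0^1 \HF(\xb+t\hb)\hb\,dt$, so that $\dF(\xb+\hb) - \dF(\xb) - \HF(\xb)\hb = \int_0^1 \big(\HF(\xb+t\hb) - \HF(\xb)\big)\hb\,dt$. Moving the $2$-norm inside the integral and bounding $\|\HF(\xb+t\hb) - \HF(\xb)\|_2 \le \Hlip\, t\,\|\hb\|_2$ by the first line gives an integrand at most $\Hlip\, t\, \|\hb\|_2^2$, and $\int_0^1 t\,dt = \frac{1}{2}$ yields the claimed bound $\frac{\Hlip}{2}\|\hb\|_2^2$.

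For the second line I would use the quadratic Taylor expansion with integral remainder, $F(\xb+\hb) = F(\xb) + \la \dF(\xb), \hb\ra + \int_0^1 (1-t)\la \HF(\xb+t\hb)\hb, \hb\ra\,dt$, and then subtract the frozen quadratic term $\frac{1}{2}\la \HF(\xb)\hb,\hb\ra = \int_0^1 (1-t)\la \HF(\xb)\hb,\hb\ra\,dt$ to obtain $F(\xb+\hb) - F(\xb) - \la \dF(\xb),\hb\ra - \frac{1}{2}\la \HF(\xb)\hb,\hb\ra = \int_0^1 (1-t)\la (\HF(\xb+t\hb)-\HF(\xb))\hb, \hb\ra\,dt$; Cauchy--Schwarz and the Hessian-Lipschitz bound make the integrand at most $(1-t)\Hlip\, t\,\|\hb\|_2^3$, and $\int_0^1 t(1-t)\,dt = \frac{1}{6}$ finishes it. There is essentially no genuine obstacle here: the only care needed is in writing the integral form of Taylor's theorem correctly and in noting that the Hessian-Lipschitz hypothesis makes $F$ twice continuously differentiable, so that these integral representations are valid. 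This is precisely Lemma~1 of \citet{Nesterov2006Cubic}, and the argument above is the standard one.
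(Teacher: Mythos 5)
Your proposal is correct and is exactly the standard integral-form Taylor argument (Lemma 1 of Nesterov--Polyak) that the paper itself cites rather than reproduces. The only thing to note is that the first displayed line is literally the hypothesis of the corollary as stated, so your fallback derivation from Assumption \ref{assumption:hess_lip} via the triangle inequality on the average $\HF = \frac{1}{n}\sum_i \Hf_i$ is redundant here but harmless.
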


\bibliographystyle{ims}
\bibliography{reference}

\end{document}